\newtheorem{theorem}{Theorem}[section]
\newtheorem{lemma}[theorem]{Lemma}
\newtheorem{prop}[theorem]{Proposition}
\newcommand{\x}{\mathbf x}
\newcommand{\y}{\mathbf y}
\renewcommand{\k}{{\rm v_{com}}} 
\newcommand{\f}{\tilde f} 
\newcommand{\wavespeed}{{\rm v}}
\newcommand{\dirset}{U}
\newcommand{\fac}{\gamma}
\newcommand{\w}{\bar\varrho}
\renewcommand{\v}{\varrho}
\newcommand{\p}{r}
\newcommand{\goo}{{\mathcal G}} 
\newcommand{\statespace}{\hat{\cal C}}
\renewcommand{\a}{\alpha}
\renewcommand{\b}{\beta}
\newcommand{\g}{\gamma}
\renewcommand{\i}{{\mathbb I}}
\newcommand{\rambda}{\ell}
\newcommand{\cu}{q}
\begin{document}

\title{Effect of Noise on Front Propagation in
Reaction-Diffusion equations of KPP type}
\author{Carl Mueller$^1$
\\Department of Mathematics\\University of Rochester
\\Rochester, NY  14627
\\E-mail:  cmlr@math.rochester.edu
\and
Leonid Mytnik$^2$
\\Faculty of Industrial Engineering and Management
\\Technion -- Israel Institute of Technology
\\Haifa 32000, Israel
\and
Jeremy Quastel$^3$
\\ Departments of Mathematics and Statistics
\\University of Toronto
\\Toronto, Ontario
\\Canada
}
\maketitle

\footnotetext[1]{Supported by an NSF grant. 
$\mbox{}^2$ Supported in part by the Israel Science Foundation 
(grant No. 1162/06).
$\mbox{}^3$ Supported
by NSERC, Canada.  

{\em Key words and phrases.}  reaction-diffusion equation, stochastic partial differential equations, white noise, random traveling fronts. 

AMS 1991 {\em subject classifications}
Primary, 60H15; Secondary, 35R60, 35K05.}




\begin{abstract}
We consider reaction-diffusion equations of KPP type in one spatial dimension,
perturbed by a Fisher-Wright white noise, under the assumption of uniqueness
in distribution.  Examples include the randomly perturbed Fisher-KPP equations
\begin{equation}
\partial_t u = \partial_x^2 u + u(1-u) + \epsilon \sqrt{u(1-u)}\dot W,
\end{equation}
and
\begin{equation}
\partial_t u = \partial_x^2 u + u(1-u) + \epsilon \sqrt{u}\dot W,
\end{equation}
where $\dot W= \dot W(t,x)$ is a space-time white noise.

We prove the Brunet-Derrida conjecture that the speed of traveling fronts
is asymptotically \begin{equation}
2-\pi^2 |\log \epsilon^2|^{-2}
\end{equation}
 up to a factor of order $ (\log|\log\epsilon|)|\log\epsilon|^{-3}$.
\end{abstract}

\section{Randomly perturbed KPP and the Brunet-Derrida conjecture}
\label{intro}
\setcounter{equation}{0}

In this article we study randomly perturbed Kolmogorov-Petrovsky-Piscunov (KPP) equations,
\begin{equation}
\label{rkpp}
\partial_t u 
= \partial_x^2 u + f(u) + \epsilon\sigma(u)\dot{W} \qquad t\ge 0, ~~x\in \mathbf R, ~~ u\ge 0
\end{equation}
where $\dot{W}=\dot{W}(t,x)$ is two-parameter white noise; $f$ is assumed to be  a Lipschitz function satisfying standard KPP  conditions,
\begin{equation} \label{standardkppcond}
f(0)=f(1)=0; \qquad 0< f(u)\le uf'(0), \quad u\in (0,1),
\end{equation} and in addition that for $u\ge 1$,
\begin{equation}\label{Gammadef}
f(u) \le 2-u.
\end{equation} We can and will rescale so that $f'(0)=1$. We assume that 
$\sigma^2(u)$ is a Lipschitz function satisfying 
\begin{equation}\label{sigmacond0}
\sigma^2(u)\le u
\end{equation}
 and for which there exist $a^*>0$ and $0<u^*<1$ such that 
\begin{equation}\label{sigmacond}
{\sigma^2(u_2)-\sigma^2(u_1)}\ge a^*({u_2-u_1}), \qquad{\rm for } \quad 0\le u_1<u_2\le u^*.
\end{equation}
We will consider (\ref{rkpp}) with initial data 
 $u(0,x) =u_0(x)$ satisfying, for some $x_0\in {\mathbf R}$, 
 \begin{equation}\label{16}
 u_0(x)\ge \theta>0, ~x\le x_0,\qquad{\rm and}\qquad \int_{x_0}^\infty u_0(x)dx<\infty
\end{equation}
and contained in some subset $\statespace$ of the set $\mathcal{C}_{\rm exp}$  of non-negative continuous functions $f$ on $\mathbf{R}$
with $f(x)\le Ce^{|x|}$ for some $C<\infty$, 
for which we know 
\begin{eqnarray}\label{wu1}
&&  u_0(x)\in \statespace ~~~\Rightarrow ~~~u(t,x)\in \statespace \;\;\forall t>0,\;P-{\rm a.s.}\\
  \label{wu2}&&  {\rm Weak~ uniqueness~ holds~ in~}\statespace.
  \end{eqnarray}
 Key examples are \begin{equation}\label{MScase1}f(u)=u(1-u)\end{equation}and \begin{equation}\label{MScase2}\sigma^2(u)= u(1-u){\bf 1}(u\le 1)\end{equation} or 
\begin{equation}\label{con}\sigma^2(u) = u\end{equation} 
with $u_0(x)\in [0,1]$ for (\ref{MScase2}) and $u_0$ satisfying 
$
 e^{-x}\ge u_0(x)$
for (\ref{con}).
(\ref{MScase1}) with (\ref{MScase2}) appears
 as the limit of the long range voter model and with 
 (\ref{con})
 as the limit of the long range contact process (see 
\cite{mt95}).   

Note that (\ref{Gammadef}) is not relevant for models such as (\ref{MScase2}) where $0\le u(t,x)\le 1$ for all time.  But some condition
on large $u$ is needed in cases such as (\ref{con}) where fluctuations
can take the solution above $1$.

We regard the stochastic partial differential equation (SPDE) (\ref{rkpp}) as shorthand for the  integral equation,  
\begin{eqnarray}
\label{int-eq:u}
u(t,x) &=& \int G(0,y,t,x)u_0(y)dy
    + \int \int_{0}^{t} G(s,y,t,x)f(u(s,y))ds dy \nonumber  \\
&&    + \int \int_{0}^{t} G(s,y,t,x)\sigma(u(s,y)) W(ds, dy),  
\end{eqnarray}
where $G(s,y,t,x) = G(t-s,y-x)$ is the heat kernel
\begin{equation}
\label{e17}
G(t,x) =  (4\pi t)^{-1/2}\exp\{-{x^2}/{4t}\},
\end{equation} and the white noise $\dot W$ is defined by specifying, for square integrable deterministic 
functions $\varphi(s, y)$, that
$W(\varphi):=\int \int_0^\infty \varphi(s, y)W(ds ,dy)$
are a Gaussian family with mean zero and covariance 
\begin{equation}
E\left[ W(\varphi)W(\psi)\right] =
\int \int_0^\infty  \varphi(s, y)\psi(s, y) ds dy .
\end{equation}
Here, and throughout, $\int f dx$ means the integral over the
entire real line $\mathbf R$.
Solutions to (\ref{int-eq:u}) are called {\it mild solutions}.  See \cite{wal86} for the definition of the stochastic integral in (\ref{int-eq:u}).  Readers unfamiliar with SPDE
can think of the following system of ordinary stochastic differential equations on ${\mathbf R}^{\frac{1}{n}{\mathbf Z}}$,
\begin{equation}\label{lattice}
du_{i/n} = [n^{2} ( u_{(i+1)/n}-2u_{i/n}+u_{(i-1)/n}) + f(u_{i/n})] dt + 
n^{1/2}\epsilon \sigma( u_{i/n}) dB_{i/n},
\end{equation}
where $B_{i/n}(t)$ are independent standard Brownian motions, and $n$ is large.
A corresponding evolution of functions on $\mathbf R$ is produced by
connecting the points $(i/n, u_{i/n})$ and $((i+1)/n, u_{(i+1)/n})$ by straight lines, and (\ref{rkpp}) is obtained in the weak limit as $n\to \infty$.

When  $\epsilon=0$, (\ref{rkpp}) with $f$ of the form (\ref{MScase1}) is the standard KPP, or Fisher-KPP equation, introduced in 1937 by both
Fisher \cite{F}, 
 and  Komogorov, Petrovskii, and Piscuinov \cite{KPP}.  The basic facts in this case are:    There is a one-parameter family $F_{\wavespeed}$ of traveling front solutions 
$F_\wavespeed (x-\wavespeed t)$ with $F_\wavespeed$ decreasing, 
$F_\wavespeed (x) \to 1$ as $x\to -\infty$,  $F_\wavespeed (x) \to 0$ as 
$x\to \infty$ and $F_\wavespeed(x) \simeq e^{-\gamma x}$ for large $x$, with 
$\wavespeed = \gamma+\gamma^{-1}$.  For initial data $u_0(x)={\bf 1}(x\le 0)$  we 
have convergence to the traveling front with minimal speed,
\begin{equation}\label{wvspd}
\wavespeed_0=2
\end{equation} in the sense that
\begin{equation}\label{emstar}
\lim_{t\to\infty}\sup_{x\in\mathbf{R}}
|u(t,x)-F_{\wavespeed_0} (x-m^*(t))| = 0\end{equation} where $m^*(t)$ defined by 
$u(t,m^*(t))=1/2$ satisfies
$\lim_{t\to\infty} m^*(t)/t = \wavespeed_0$. 
Further details about convergence of the KPP solution to the 
traveling front were given by McKean \cite{mck75} and \cite{mck76}, and Bramson 
\cite{bra78} and \cite{bra83}, 
among many others.   

When $\epsilon>0$ with initial data in $\mathcal{C}_{\rm exp}$ satisfying (\ref{16}), one has non-negative, continuous solutions, with a finite upper bound on the support
\begin{equation}\label{ellarr}
r(t) = \sup\{x\in {\mathbf R} :   u(t,x) >0\},
\end{equation}
for $t>0$.
The process viewed from $r(t)$,
\begin{equation}\label{pvff}
\tilde u(t,x) = u(t,x+r(t))
\end{equation}
should have a unique nondegenerate stationary solution.  This is the
{\it random traveling front}.  One also expects $t^{-1}r(t)$ to have  a
nonrandom limit,
\begin{equation}
\label{e4}
\wavespeed_\epsilon = \lim_{t\to\infty}t^{-1} r(t).
\end{equation}
This was proved \cite{ms95} in the case (\ref{MScase1}),  (\ref{MScase2}), for sufficiently small $\epsilon$.  They consider
 initial 
data $0\le u_0(x)\le 1$  such that $u_0(x)=1$ 
for $x<\ell$ and $u_0=0$ for $x>r$ for some $-\infty<\ell\le r<\infty$. 
Because in this case $\sigma(1)=0$, solutions stays within this class, and the result
of \cite{ms95} extends to any $f$ and $\sigma$ satisfying in addition to our assumptions, that
$\sigma(u)=1 +\mathcal O(\sqrt{1-u})$ as $u\uparrow 1$.

We now make some comments to justify the form (\ref{sigmacond0}),(\ref{sigmacond})  of the stochastic perturbation.  The most important reason for taking \begin{equation}\sigma(u)\simeq \sqrt{u},\qquad 0\le u <\!< 1\end{equation} 
is that this is the type of correction seen when approximating the reaction-diffusion
problems by microscopic particle models.  
For example, the reaction-diffusion equation with $f(u)=u(1-u)$ was originally derived by Fisher
as a model for the spread of an advantageous gene;  the term $u(1-u)$ in (\ref{MScase1}) represents the frequency of 
mating between the individuals with and without the advantageous gene.
If there is randomness in the mating, for example, if matings were successful with a certain probability, the variance of the random
term is naturally  proportional to $u(1-u)$, and this leads to a term $\sqrt{u(1-u)}\dot{W}(t,x)$.   
%

In this article we are primarily concerned with the asymptotics of $\wavespeed_\epsilon$ as $\epsilon\to 0$ in (\ref{rkpp}).
It is not hard to see (for example, in (\ref{MScase1}), by taking expectations, and applying Jensen's inequality) that $\wavespeed_\epsilon
\le \wavespeed_0$.
Recently, Brunet and Derrida \cite{bd97} and \cite{bd01} 
(see also \cite{kns98}, \cite{pl99}) have made the remarkable conjecture that as $\epsilon\to0$,
\begin{equation}
\label{bd:conjecture}\wavespeed_0-
 \wavespeed_\epsilon \simeq  \frac{\pi^2}{|\log\epsilon^2|^{2}}.
\end{equation}
It is worth noting how enormous the correction is.  For example, a
naive Taylor expansion might suggest, since symmetry implies $\epsilon=0$ is a local maximum,
that $\wavespeed_0-
 \wavespeed_\epsilon \simeq \mathcal O(\epsilon^2)$.  
 The phenomenon was unexpected, and first observed
 through computer simulations of particle systems.  It was  not long before this was understood at the physical level as a consequence of the
pulled nature of the fronts. Recall that in an evolution equation with traveling fronts
between an  unstable and a stable state, the front is said to be {\it pulled} if its asymptotic
speed is the same as that of the linearization of the equation about the unstable state,
and {\it pushed} if the speed is larger than that of the linearization (see \cite{vS}).
KPP equations have (marginally) pulled fronts.   Because in pulled fronts the front speed is determined in the
  region where the density $u$ is very small, in retrospect one should not be surprised that  fluctuations there of order
  $\sqrt{u}$, would have a dramatic effect on the front speed.
Bramson \cite{bra78} also proved for the KPP equation with initial data $u_0(x) = {\bf 1}(x\le 0)$, that, 
\begin{equation}\label{bramson}
m^*(t) = \wavespeed_0 t + \frac32 \log t + \mathcal O(1),
\end{equation}  which is also supposed to be universal for pulled fronts \cite{vS}.  These behaviors all reflect the fact that in pulled fronts there is no spectral gap in the linearization around $F_{\wavespeed_0}$.  

The phenomenon (\ref{bd:conjecture}) has  also been observed in systems where the variable $u$ is forced to take discrete values,  such
as particle systems on the lattice with random walks and birth-death components.  Here
$\epsilon^2$ is the effective mass of a particle. 

Brunet and Derrida \cite{bd97} conjectured that the front speeds in these systems  behave for small $\epsilon$
like that of the solution of the cutoff KPP equation
\begin{equation}\label{cutoffKPP}
\partial_t u 
= \partial_x^2 u + u(1-u) {\mathbf 1}(u\ge \epsilon^2).
\end{equation}
The idea is that when $u<\epsilon^2$, $u(1-u)< \epsilon\sqrt{u(1-u)}$ and the 
noise term in (\ref{rkpp}) beats the creation term down to zero.  Alternatively (\ref{cutoffKPP}) can
be thought of as a single particle cutoff.  They
then gave a nonrigorous argument that for small $\epsilon$, (\ref{cutoffKPP}) 
has travelling fronts with velocity
\begin{equation}\label{cut}
\wavespeed_{\rm cutoff} \simeq  \wavespeed_0-\frac{\pi^2}{|\log\epsilon^2|^{2}}.
\end{equation}
  
  The argument for (\ref{cut}),
using matched asymptotics, is not difficult to make rigorous.  It is known 
\cite{DPK}, \cite{BDL} that \begin{equation}\label{cut2}
\wavespeed_{\rm cutoff}- \wavespeed_0+\frac{\pi^2}{|\log\epsilon^2|^{2}}=\mathcal O( \frac1{|\log\epsilon|^{3}}).
\end{equation}
Implicit in our argument is a simpler proof (with a slightly worse correction of $\mathcal O( \frac{\log|\log\epsilon|}{|\log\epsilon|^{3}})$).  

What was less clear was how to make rigorous the connection between either microscopic particle models or (\ref{rkpp}) and (\ref{cutoffKPP}).  Here we work with (\ref{rkpp}) as
a kind of canonical model system for the phenomenon
(\ref{bd:conjecture}):  In particular, the fact
that particle models and (\ref{rkpp}) both are expected to display this behaviour is perhaps the
strongest motivation for the particular form of
the noise (\ref{sigmacond}). 

See \cite{P} and references therein for a very comprehensive review of the 
physical aspects of the Brunet-Derrida theory.

Conlon and Doering \cite{cd04} recently obtained progress on 
(\ref{bd:conjecture}) by coupling (\ref{MScase1}), (\ref{MScase2}) to a contact process
(see Liggett \cite{lig85}), proving that for sufficiently small $\epsilon$, 
\begin{equation}
\label{e9}
\wavespeed_\epsilon \ge \wavespeed_0-
 \frac{C\log|\log\epsilon|}{|\log\epsilon|^{2}}.
\end{equation}

Very recently, \cite{BDMM} have made a conjecture about the corrections to (\ref{bd:conjecture}).  Using a phenomenological
argument, they propose
\begin{equation}\label{bdmm}
\wavespeed_\epsilon - \wavespeed_{0} +\frac{\pi^2}{|\log\epsilon^2|^{2}}\simeq \frac{6\pi^2\log|\log
\epsilon|}{|\log\epsilon^2|^3}.
\end{equation}
The $6$ on the right hand side is closely related to the $3$ in (\ref{bramson}).

In this article we prove
the Brunet-Derrida conjecture for models of the form (\ref{rkpp})-(\ref{Gammadef})  with a correction of the same order as conjectured
in   \cite{BDMM}. 

However our understanding of  the well-posedness of (\ref{rkpp})
is not complete, and so a few comments are needed before we can
state the result.

Existence for (\ref{rkpp}) is straightforward and can be obtained as the limit of
its spatial discretization (\ref{lattice}).   
Starting with non-negative initial data, one obtains in this way a non-negative solution, H\"older $\alpha<1/2$ in space and $\beta<1/4$ in time.
Alternately, equations of the form (\ref{rkpp}) can be obtained as appropriate limits of particle systems.   Note that we are allowing solutions to have $u\ge 1$, which is slightly non-standard, in particular
for models such as $f(u)=\sigma^2(u)=u(1-u)$ where $u$ is usually
taken to be in $[0,1]$.  In terms of existence, this does not make any
difference.

On the other hand, uniqueness of (\ref{rkpp}) is not known in our case because the coefficient in front of the noise in not Lipschitz.  At the time of writing there is not even a consensus whether strong uniqueness should be true (for new results on strong uniqueness for stochastic partial differential equations with non-Lipschitz coefficients 
 see~\cite{MP}, although they still do not cover the present case).  {\it Weak uniqueness} means uniqueness of the martingale
problem for (\ref{rkpp}) with respect to the family of functionals
$f_\phi(u)= \exp\{ -\int u\phi dx \}$, $\phi$ smooth, non-negative, with compact 
support, within the class of continuous non-negative solutions $u$,
and in addition, the measurability of the Markov transition functions
$P_{s,u(\cdot)}(A)= P(u(t,\cdot)\in A~|~u(s,\cdot)=u(\cdot))$.
Weak uniqueness in particular implies the 
strong Markov property, which is one of our basic tools.  
 Weak 
 uniqueness  can be obtained in special cases of (\ref{rkpp}) using duality.
An example is when $\sigma$ is of the form (\ref{MScase2})
or (\ref{con}).  The case (\ref{MScase1}), (\ref{MScase2}) has an explicit dual particle system, described below,
whose existence allows one in principle to compute the law for the stochastic
partial differential equation.
The case (\ref{MScase1}), (\ref{con}) is self-dual (see \cite{HT}). 

What we really use about our solutions are the strong Markov
property with respect to a family of hitting times, together with the {\it comparison principle}. Roughly the comparison principle for SPDE states that if $u$ and $v$ are  solutions of
\begin{equation}\label{spde5}
\partial_t u =\partial_x^2 u +f(x,u) + \sigma(u) \dot W, \qquad \partial_t v =\partial_x^2 v +g(x,v) + \sigma(v) \dot W
\end{equation}
on $[0,T]$ with $v(0,x)\le u(0,x)$, and $g(x,u)\le f(x,u)$ for all $x,u$,
 then $v(t,x)\le u(t,x)$ for all $t\in [0,T]$ and $x\in \mathbf R$ almost surely. It is the analogue of monotonicity or attractiveness in particle systems.
  Examples of such theorems can be found in \cite{Pa}. A simple variant of the above it that our $v$ will, in addition, satisfy a Dirichlet condition $v=0$ on a set $\dirset$ such as $x>{\rm v} t$ or
 $|x|\ge {\rm v} t+L$.  This is not a large leap, as one can think of it as the $N\to \infty$ limit of $g=-N$ on $\dirset$. So there is no surprise that the comparison continues to hold.  There will be a few other twists and we will
 be a little more precise later.  But the main point is that proofs of comparison theorems of this type 
 require as input a strong uniqueness theorem.  Hence they are not directly available to us.

 Now any solution we are really interested in will be the result of some approximation scheme by systems, for example particle systems, for 
 which the comparison principle is essentially obvious.  Similarly, the
 strong Markov property will hold for such systems.  And both are maintained  under weak limits.  So we could in principle just take a pragmatic approach and
 simply assume that our solution has the needed properties. Since this is a little cumbersome, instead we 
 will state our results under the assumption of weak uniqueness.
 
 Note that  weak uniqueness implies the existence of versions satisfying both the strong Markov property and comparison principle.   That it implies the strong Markov property is
 well known.  To obtain a version satisfying the comparison principle, construct a sequence of
 Lipschitz $\sigma^{(n)}(u)$ converging uniformly to $\sigma(u)$.  The
 corresponding equations have strong uniqueness and therefore the 
 comparison principle.  It is not hard to check that such sequences are
 tight and it is easy to see that the comparison principle continues to 
 hold in the weak limits.   Note that all our results are statements in distribution. It is therefore
 always enough to work with appropriate versions of our process, and therefore weak uniqueness is
 sufficient.


 We can now state the main theorem.  Let $u(t,x)$ be the solution of (\ref{rkpp}) and $r(t)$ be as in (\ref{ellarr}).  For initial data in $\mathcal{C}_{\rm exp}$ satisfying (\ref{16})  let
\begin{equation}\label{130a}
\bar{\rm v}_\epsilon = \limsup_{t\to \infty} t^{-1} r(t).
\end{equation} 
For initial data satisfying $u(0,x)\ge \theta>0$, $x\le 0$, $u(0,x)=0$, $x>0$, let
\begin{equation}\underline {\rm v}_\epsilon=
\liminf_{t\to \infty} t^{-1} r(t).
\end{equation} Let
$\alpha(a) $ be the largest $\alpha$ such that 
\begin{equation}\label{alpha}
(1-a)u1(u\le \alpha)\le f(u).
\end{equation}
Note that $\alpha(a)>0$ if $a>0$ from the assumptions on $f$.
For example, if $f''(0)>-\infty$ we have $\alpha(a) = Ca$ for some $C<\infty$.

\begin{theorem}
\label{t1}  Assume  $u_0(x)\in \statespace$ satisfying (\ref{16}), (\ref{wu1}), (\ref{wu2}).   Then there exists $\epsilon_0>0$ such that for all
$\epsilon\leq \epsilon_0$, \begin{eqnarray}\label{e10}
   \underline{\rm v}_\epsilon & \ge & \wavespeed_0 -
\frac{\pi^2}{|\log\epsilon^2|^{2}} -\frac{2\pi^2[
 9\log|\log\epsilon| -\log \alpha(|\log\epsilon|^{-3}) }{ |\log\epsilon^2|^{3}}   \\\label{e11}  \bar{\rm v}_\epsilon
 & \le &  \wavespeed_0 -
\frac{\pi^2}{|\log\epsilon^2|^{2}}  + \frac{8\pi^2\log|\log\epsilon|}{ |\log\epsilon^2|^{3} }.
\end{eqnarray}
\end{theorem}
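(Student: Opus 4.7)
The plan is to bracket the front velocity between the speeds of two explicit cutoff-KPP linearisations, both of which equal $\wavespeed_0-\pi^2/|\log\epsilon^2|^2+O(\log|\log\epsilon|/|\log\epsilon|^3)$. The characteristic length scale $L\simeq|\log\epsilon^2|$ is forced by the crossover $\epsilon\sqrt u\asymp u$ at $u\asymp\epsilon^2$: below this threshold the noise overwhelms the reaction term and effectively imposes a Dirichlet-type cutoff, while above it the reaction dominates. On an interval of length $L$ with Dirichlet boundary conditions the principal eigenvalue of $\partial_x^2+1$ equals $1-\pi^2/L^2$, so the minimum velocity of an exponentially decaying front $e^{-\mu y}\sin(\pi y/L)$ satisfying the linearisation is $2\sqrt{1-\pi^2/L^2}=\wavespeed_0-\pi^2/L^2+O(L^{-4})$; the additional $\log L / L^3$ term comes, as in the matched asymptotics leading to (\ref{cut2}), from matching the linear Dirichlet region to the nonlinear reaction zone.

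For the upper bound (\ref{e11}) I would work in a moving frame $y=x-\wavespeed t$ with $\wavespeed$ slightly above the target and show that the SPDE tail cannot extend more than $L+O(\log L)$ beyond the bulk with macroscopic probability. The device is to pair the solution on a sliding window behind $r(t)$ with the principal Dirichlet eigenfunction $\phi(y)=\sin(\pi y/L)$ and take expectations: the noise contribution integrates to a mean-zero martingale, and the nonlinearity is controlled by the KPP upper bound $f(u)\le u$ together with (\ref{Gammadef}) in the bulk. The resulting linear ODE for $E\bigl[\int u\phi\,dy\bigr]$ is unstable only when the frame speed lies below $\wavespeed_0-\pi^2/L^2+O(\log L / L^3)$, and a Borel-Cantelli argument then produces (\ref{e11}).

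The lower bound (\ref{e10}) is the genuinely delicate direction. I would construct, on a translating block $[m(t),m(t)+L]$ moving at the target speed, a deterministic subsolution to an appropriate cutoff-KPP equation and dominate it by $u$ via the comparison principle described around (\ref{spde5}). The block argument splits into two pieces: (a) a \emph{survival} estimate showing that once $u\ge\alpha$ on a length-$L$ interval, the bulk persists at amplitude $\alpha(|\log\epsilon|^{-3})$ for time long enough to advance the front by $L$, with probability bounded away from zero; and (b) an \emph{advancement} estimate producing a subsolution at the leading edge via the same linear Dirichlet analysis as in the upper bound. The function $\alpha(\cdot)$ from (\ref{alpha}) enters at step (a) as the minimal amplitude for which the deterministic drift $(1-a)u$ beats the Fisher-Wright fluctuation $\epsilon\sqrt u$ at the relevant scale, which is precisely what produces the $\log\alpha(|\log\epsilon|^{-3})$ factor in (\ref{e10}). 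Iterating by the strong Markov property at the hitting times $\tau_n=\inf\{t:r(t)\ge r(\tau_{n-1})+\delta L\}$, together with a renewal/subadditivity argument, delivers (\ref{e10}).

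The main obstacle is step (a): ruling out noise-induced extinction of the bulk over windows of length $L$ requires a sharp one-sided lower bound on the branching fluctuations of the SPDE, in a regime where neither the Fisher-Wright duality available for (\ref{MScase2}) nor the self-duality for (\ref{con}) applies in general. I expect to handle this by a block-and-coupling argument against a branching comparison process, combined with the Lipschitz approximations $\sigma^{(n)}$ of the preamble in order to invoke the strong-uniqueness form of the comparison principle and then pass to the weak limit using (\ref{wu1})-(\ref{wu2}). A secondary difficulty is that matching the $\log|\log\epsilon|$ precision of (\ref{bdmm}) requires carrying all the Dirichlet eigenvalue expansions to $O(L^{-3})$; this is what pins down the explicit numerical constants $2$, $9$ and $8$ appearing in (\ref{e10}) and (\ref{e11}).
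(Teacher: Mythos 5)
Your heuristic for the $\pi^2/|\log\epsilon^2|^2$ term (Dirichlet eigenvalue on a window of length $L\simeq|\log\epsilon^2|$, cutoff where $\epsilon\sqrt u\asymp u$) and your lower-bound skeleton (translating blocks, comparison principle, renewal) agree with the paper in outline, but both halves are missing the decisive step. For the upper bound, pairing $u$ with $\sin(\pi y/L)$ and \emph{taking expectations} cannot produce (\ref{e11}): the stochastic integral has mean zero, so the ODE for $E[\int u\phi\,dy]$ retains no trace of the noise, and with only $f(u)\le u$ it reproduces the classical bound $\bar{\rm v}_\epsilon\le \wavespeed_0$ --- precisely the Jensen computation already noted in Section \ref{intro}. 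The Brunet--Derrida correction on the upper-bound side is a \emph{pathwise} effect: the noise must actively extinguish the mass that leaks ahead of the putative front, and nothing in your argument supplies the claimed confinement of the tail to $L+O(\log L)$. The paper's mechanism is the splitting $u=v+w$ of (\ref{drkpp})--(\ref{e30first}), where $v$ obeys the SPDE with Dirichlet condition on $x={\rm v}t$, the expected flux across that line satisfies $E[A(t+1)-A(t)]\le C\nu$ with $\nu=\gamma\epsilon^2$ and $\gamma\sim|\log\epsilon|^{-4}$ (because the comparison front is built with slope $\gamma\epsilon^2$, not $\epsilon^2$, at the cutoff), and the critical-mass Lemma \ref{lem:l4} shows that mass injected at rate $o(\epsilon^2)$ into $\partial_t w=\partial_x^2w+bw+\vartheta\sqrt{w}\dot W$ dies in time $O(1)$ with probability $1-O(\gamma)$; iterating over $T=\lfloor|\log\epsilon|^4\rfloor$ unit intervals and restarting at the hitting times (\ref{xidef}) then gives the speed-up $2L/T\le 2|\log\epsilon|^{-3}$. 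You have no substitute for this extinction argument.

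For the lower bound, you correctly identify block survival as the obstacle, but the resolution is not a duality or branching-coupling bound (which, as you note, is unavailable in the generality of (\ref{sigmacond0})--(\ref{sigmacond})). It is the refined large-deviation estimate of Lemma \ref{lbd}, which exploits $\sigma^2(u)\le u$ to show that the fluctuation of $\underline u-\underline\varrho$ near the front is $O(\epsilon\sqrt{\mathcal G\underline\varrho})=O(\gamma^{1/2}\epsilon^2)$ rather than the naive $O(\epsilon)$; this is what permits a subsolution of height $\varepsilon^2=\gamma\epsilon^2$ with $\gamma$ only polylogarithmically large, hence $|\log\varepsilon^2|=|\log\epsilon^2|+O(\log|\log\epsilon|)$ and the sharp constant $\pi^2$. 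Without it, the fluctuations swamp the $O(\varepsilon^2)$ tip of the subsolution and the argument only yields a $\pi^2$ with the wrong constant. Two smaller corrections: $\alpha(\cdot)$ enters not as the amplitude at which drift beats the Fisher--Wright noise, but through the piecewise-linear minorant $\underline f\le f$ of (\ref{underlinef}), whose saddle at height $\alpha/2$ shifts the matching constant $C_2$ in (\ref{kbdequation}) and thereby produces the $\log\alpha(|\log\epsilon|^{-3})$ in (\ref{kappabd}) and (\ref{e10}); and the constants $2$, $8$, $9$ come from this phase-plane matching together with the bookkeeping of $L/T$ and $\gamma$, not from an $O(L^{-3})$ eigenvalue expansion.
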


{\it Remark on $\epsilon_0$.}   Since the phenomena is observed in 
particle simulations, it is worthwhile to ask whether the mathematical result can be proven 
with $\epsilon_0$ of a size approachable by computation.  In fact, computations with effectively $N=10^{10}$ particles are commonplace at the time of writing.
 In case (\ref{MScase1}), (\ref{MScase2}),  we 
can check that our method works with $\epsilon_0= e^{-11}$.    Since $N$ particles corresponds to a correction of size $\epsilon=N^{-1/2}$ the mathematical result covers the typical regime of computations.




{\it Remark on  duality for the case of (\ref{rkpp}) with $f(u)=\sigma(u)= u(1-u)$. }    Let $x_i(t): i=1,\ldots,N(t)\le \infty$ be a system of independent Brownian motions with generators $\partial_x^2$.  
Each particle splits in two at rate $1$, and pairs of particles coalesce at 
exponential rate $\epsilon^2$ during their  intersection local time.  
The generator is
\begin{equation}
A f(\mathbf{ x}) = \sum_{i} \Delta_i f(\mathbf{ x})
+\sum_{i} ( f( {\mathbf x}_i^+ ) -f (\mathbf{x})) + {\epsilon^2} \sum_{i> j} 
{\delta}_{x_i =  x_j}( f({\mathbf x}_i^- ) -f({\mathbf x}))
\end{equation}
where ${\mathbf x}_i^+$ is the configuration obtained from $\mathbf x$ by replacing $x_i$ by two particles at the same location,
and  ${\mathbf x}_i^-$ is the configuration obtained from $\mathbf x$ by removing $x_i$.
We have the  duality relation \cite{shi88},
\begin{equation}
\label{e41} 
E\left[\prod_{i}(1-u(t,x_i(0)))\right]
= E\left[\prod_{i}(1-u(0,x_i(t)))\right], 
\end{equation}
where the expectation is taken over independent $u$ and $x_i$.  Among other things, (\ref{e41}) gives us an expression for 
the moments of $u$, providing the weak uniqueness. 

One can also deduce from the result about the wavespeed in the random KPP  
results  about the wavespeed in the dual process.   Suppose we start our branching and coalescing system with one particle at $0$, and let
$L(t)$, $R(t)$  denote the positions of the leftmost and rightmost  particles in the system at time $t$. Take $u_0(x) = {\mathbf 1}(x\le 0)$.
The duality relation, together with the natural reflection symmetry and
spatial homogeneity, give 
$P(L(t) <-x)=P(R(t)>x) =  E[ u(t,x) ] $, and Theorem \ref{t1} then translates to \begin{equation}\label{branch-coalesc}
\lim_{t\to \infty} t^{-1}R(t) = -\lim_{t\to \infty} t^{-1}L(t)=\wavespeed_\epsilon.
\end{equation}

Here then is another example of the Brunet-Derrida theory:  The branching-coalescing Brownian motions model possesses two invariant measures.  The stable one is a Poisson point process with intensity $\epsilon^{-2}$, and the unstable one consists of no
particles at all.   On a large scale we see the first invading the second at linear 
speed $\wavespeed_\epsilon$.  If we introduce a phase variable in $[0,1]$ so that
$0$ corresponds  to the unstable phase and $1$ corresponds to the stable phase, then
the effective particle mass is $\epsilon^2$, as predicted.



%


Finally we comment on the structure of the paper.  To make the arguments leading to Theorem \ref{t1} more
transparent, in the next  section
we sketch the logic of the proof, assuming the main technical lemmas, which are then
left for later sections, and assuming as well that the necessary manipulations of the
SPDEs can be performed.  We then prove 
the validity of these manipulations in Section \ref{comparison theorem}.

\section{Outline of the proof}
\setcounter{equation}{0}

\subsection{Comparison equation}
\renewcommand{\nu}{\varepsilon^2}

The general idea behind our proof of Theorem \ref{t1} is to compare the stochastic KPP evolution (\ref{rkpp}) to:
\begin{equation}\label{compeq}
\left\{\begin{array}{ll}
\partial_t \varrho
= \partial_x^2 \varrho + f(\varrho), & x< {\rm v} t, \\
\varrho(t,x) = 0,& x\ge{\rm v} t. \end{array}\right.
\end{equation}
We search for the ${\rm v} = \k$ for which there exists a traveling front solution 
\begin{equation}
\varrho(x,t) = F(x-\k t)
\end{equation}
 with $\lim_{x\to -\infty} F(x)=1$ and \begin{equation}\label{bccomp}
 F'(0)= -\nu.
 \end{equation}

The problem (\ref{compeq})-(\ref{bccomp}) is our replacement for Brunet and Derrida's comparison equation (\ref{cutoffKPP}).  The idea is that the solution will have a mass
of $\mathcal O(\varepsilon^2)$ within a distance $\mathcal O(1)$ of $x=\k t$.  Here $\varepsilon$ is a small parameter which does not necessarily have to be related to the perturbation parameter $\epsilon$ in (\ref{rkpp}).  In fact, it will be convenient to take $\varepsilon$ to be
slightly larger or smaller than $\epsilon$.  But
if $\varepsilon=\epsilon$, the mass $\mathcal O(\varepsilon^2)$  is the critical mass which can survive in the stochastic equation when $u$ is small.  Heuristically,  this  will provide a consistent strategy for a stochastic traveling front in (\ref{rkpp}) to propagate.

To determine the resulting ${\rm v}=\k(\nu)$, 
let $\x(t)= F(-t)$ and note that the problem is equivalent to that of finding the
 ${\rm v}$ such that  the solution of the ordinary differential equation
 \begin{equation}
\x''={\rm v}\x' -{ f}(\x),\qquad \x(0)=0, ~\x'(0)= \nu
\end{equation}
 has $\x(\infty)=1$.
 Let $\y= \x'$.  In the  phase plane of
 \begin{eqnarray} \x'& = &\y \\ \y' & = & {\rm v} \y -{ f}(\x)
 \nonumber 
 \end{eqnarray} there is an unstable node at
 $(\x,\y)=(0,0)$ and a saddle point at $(\x,\y)=(1,0)$, joined by a 
  separatrix solution $(\x(t),\y(t))$, $-\infty<t<\infty$, with $\x(-\infty)=0$, $\x(\infty)=1$.
 For ${\rm v}\ge 2$, the linearization $\x'_{\rm lin}= \y_{\rm lin}$, $\y'_{\rm lin}= {\rm v} \y_{\rm lin} -\x_{\rm lin}$
 about $(0,0)$ has distinct positive eigenvalues which merge as ${\rm v}\downarrow 2$, then split into a complex pair for ${\rm v}<2$.  For ${\rm v}\ge 2$, the separatrix 
 corresponds to an exponentially decaying traveling front solutions of the (nonrandom) KPP equation.  For ${\rm v}<2$,
 $(0,0)$ is a spiral source, the $\x\ge 0$ corresponding in the same way to a traveling front
 solutions of (\ref{compeq}).  The separatrix enters the region $(\x,\y)\in 
 (0,1)\times (0,\infty)$ at $(\x,\y)= (0,\nu({\rm v}))$, and problem 
 (\ref{compeq})-(\ref{bccomp}) is now seen to be equivalent to computing
the inverse function ${\rm v}(\nu)$.


  \begin{prop}\label{kbdprop} Let $\k=\k(\nu)$ be the solution of (\ref{compeq})-(\ref{bccomp}). There exist $\varepsilon_0>0$ such that for $|\varepsilon|<\varepsilon_0$,
\begin{equation}\label{kappabd}
 2- \frac{\pi^2}{(|\log\nu|-3\log|\log\nu| -\log\alpha(|\log\nu|^{-3}) -2)^{2}} 
\le \k \le
 2- 
\frac{\pi^2}{\left( |\log \nu| + 3\right)^{2}}.
\end{equation}
If $\alpha(a)=a$ then $\varepsilon_0$ can be taken to be $e^{-8}$.
\end{prop}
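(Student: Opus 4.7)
The plan is to carry out a phase-plane analysis of the autonomous system
\begin{equation*}
\x' = \y,\qquad \y' = {\rm v}\y - f(\x),\qquad (\x(0),\y(0)) = (0,\nu),
\end{equation*}
aiming to identify the unique ${\rm v}=\k(\nu)$ whose trajectory lies on the stable manifold of the saddle $(1,0)$. The central tool is comparison with two linearizations obtained by replacing $f(\x)$ by $c\x$ with $c=1$ (upper, since $f(\x)\le \x$) and $c=1-a$ (lower, valid on $[0,\alpha(a)]$ by (\ref{alpha})). Each linearization is exactly solvable, with the first maximum of $\x$ on its trajectory from $(0,\nu)$ given, via the conserved quantity $H_c=e^{-{\rm v} t}(\y^2-{\rm v}\x\y+c\x^2)\equiv \nu^2$, by
\begin{equation*}
X^*_c = \frac{\nu}{\sqrt{c}}\,e^{{\rm v} t^*_c/2},\qquad t^*_c = \frac{\pi}{\omega_c}-\frac{2}{{\rm v}}+O(\omega_c),\qquad \omega_c=\sqrt{c-{\rm v}^2/4},
\end{equation*}
valid for ${\rm v}<2\sqrt{c}$.

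The heart of the argument is a sandwich lemma, which I would prove first by working in the phase plane $Y=Y(\x)$: as long as $\x\in[0,\alpha(a)]$ and $Y>0$, the nonlinear solution satisfies $Y_{c=1}(\x)\le Y(\x)\le Y_{c=1-a}(\x)$, and a symmetric bound holds on the descending branch $Y<0$. This follows directly from the pointwise bounds $(1-a)\x\le f(\x)\le \x$ combined with the scalar ODE comparison for $dY/d\x = {\rm v} - f(\x)/Y$. In particular, the maximum $\x$-value attained by the nonlinear trajectory is sandwiched between $X^*_{c=1}$ and $X^*_{c=1-a}$, reducing the question of whether the orbit reaches, overshoots, or undershoots $(1,0)$ to a direct comparison with these two explicit quantities.

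For the upper bound, set ${\rm v}^*=2-\pi^2/(|\log\nu|+3)^2$, so $\omega_1\approx \pi/(|\log\nu|+3)$ and a direct computation gives $X^*_1\gg 1$ for $\nu$ small enough. The lower leg of the sandwich then forces the nonlinear $\x$ past $\alpha(a)$; once outside $[0,\alpha(a)]$, standard monotone dynamics using $f>0$ on $(0,1)$ together with the saddle structure at $(1,0)$ show the orbit overshoots, so $\k(\nu)<{\rm v}^*$. For the lower bound, take $a=|\log\nu|^{-3}$, $\alpha=\alpha(a)$, and ${\rm v}_*=2-\pi^2/D^2$ with $D=|\log\nu|-3\log|\log\nu|-\log\alpha-2$. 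Substituting into $X^*_{1-a}$ and expanding $\omega_{1-a}$, $t^*_{1-a}$, and $1/\sqrt{1-a}$ to the appropriate order yields $X^*_{1-a}<\alpha$; the upper leg of the sandwich then keeps $\x<\alpha<1$ for all $t$, so $\y$ crosses zero at some $\x<1$ and the orbit undershoots, giving $\k(\nu)\ge{\rm v}_*$.

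The main obstacle is the sandwich lemma across the sign change of $Y$: while the ascending branch is handled cleanly by comparison of the scalar ODE in $Y(\x)$, on the descending branch both $\x$ and $Y$ decrease and the comparison requires either a continuity argument at the turning point or a reparametrization by a monotone variable. A secondary complication is tracking additive constants and logarithmic corrections closely enough to produce the denominator $|\log\nu|-3\log|\log\nu|-\log\alpha-2$, together with the explicit $\varepsilon_0=e^{-8}$ in the special case $\alpha(a)=a$; this requires carefully keeping the $O(\omega_c)$ term in $t^*_c$, the $1/\sqrt{c}$ prefactor, and the small-$\omega$ expansions of $\tan$ and $\sin$.
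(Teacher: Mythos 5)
Your route is genuinely different from the paper's and is, in outline, sound. The paper exploits monotonicity of the connection speed in the nonlinearity: it replaces $f$ by explicit piecewise-linear functions $\bar f\ge f$ and $\f\le f$, solves the modified connection problems exactly, and reads the speed off an exact matching condition with the stable line at the saddle. You instead fix ${\rm v}$ at the claimed bound and shoot, comparing the original nonlinear orbit $Y(\x)$ with the two linear orbits through $(0,\nu)$ via $dY/d\x={\rm v}-f(\x)/Y$ and locating the first zero of $Y$ from the first integral; your formula $X^*_c=\nu c^{-1/2}e^{{\rm v}t^*_c/2}$ and the expansion of $t^*_c$ check out. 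Two remarks. First, the obstacle you single out (carrying the sandwich across the turning point) is not needed: overshoot means $Y>0$ up to $\x=1$ and undershoot means $Y$ vanishes at some $\x<1$, and both are decided entirely on the ascending branch where the scalar comparison is clean. Second, in the upper bound the clause ``once outside $[0,\alpha(a)]$, standard monotone dynamics using $f>0$'' is not an argument ($f>0$ does not prevent $\y$ from vanishing before $\x=1$); but it is also unnecessary, since $f(u)\le u$ on all of $[0,1]$, so $Y\ge Y_{c=1}$ holds up to $\x=1$ and $X^*_1>1$ already gives $Y(1)\ge Y_1(1)>0$. Like the paper, you rely on the unproved but standard strict monotonicity of the separatrix entry height $\nu({\rm v})$ to convert overshoot/undershoot into an inequality for $\k(\nu)$.

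The one substantive failure is the final substitution in the lower bound. With $\omega_{1-a}^2=\delta-a-\delta^2/4$ one finds
\begin{equation*}
\log X^*_{1-a}=\log\nu+\frac{\pi}{\sqrt{\delta}}-1+\frac{aD^3}{2\pi^2}+o(1)=\log\nu+D+O(1),
\end{equation*}
so $X^*_{1-a}<\alpha$ forces $D\le|\log\nu|+\log\alpha+O(1)=|\log\nu|-|\log\alpha|+O(1)$. Your choice $D=|\log\nu|-3\log|\log\nu|-\log\alpha-2=|\log\nu|-3\log|\log\nu|+|\log\alpha|-2$ violates this whenever $2|\log\alpha|>3\log|\log\nu|+O(1)$, which is the generic case (for $\alpha(a)=a$ one has $|\log\alpha|=3\log|\log\nu|$), so the claimed inequality $X^*_{1-a}<\alpha$ is simply false for small $\nu$ with that $D$. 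Note that the paper's own derivation of the lower bound produces the denominator $|\log\nu|-|\log\alpha|-3\log|\log\nu|-2\log 2$, i.e.\ with $+\log\alpha$, and the expansion of the correction term in Theorem \ref{t1} is consistent with that sign; the sign of $\log\alpha$ in the displayed (\ref{kappabd}) appears to be inconsistent with both. With the sign corrected in your target, your computation closes and in fact yields a slightly sharper denominator $|\log\nu|+\log\alpha+O(1)$, with no $3\log|\log\nu|$ loss (that loss in the paper comes from the slope $\sqrt{2-a}-1\sim|\log\nu|^{-3}$ of the stable line, which your cruder criterion ``first zero of $Y$ before $\alpha$'' never sees). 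As written, however, the last step of your lower bound does not go through.
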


 First we give the heuristic idea of the proof.  It is not hard to see that ${\rm v}(\nu)$ is monotone decreasing in $\varepsilon$ with  ${\rm v}\downarrow 2$ as $\varepsilon^2\downarrow 0$.     The linearization about $(0,0)$ has explicit solution
 \begin{equation}\label{xlin}
 \x_{\rm lin} (t)= \nu \hat\delta^{-1/2} \exp\{\left(1-{\scriptstyle\frac{\delta}{2}}\right) t \}\sin(\hat\delta^{1/2} t)
 \end{equation}
 where \begin{equation}
 \delta = 2-{\rm v}.
 \end{equation} and 
$\hat \delta =\delta-\frac{\delta^2}4$.  
   Let $\Theta$ be the smallest $t>0$ with
 \begin{equation}\label{Theta}
 \x_{\rm lin}(\Theta)=1.
 \end{equation}
 Note that $\Theta\sim |\log\nu|$.
 Pretending the linearization is meaningful globally,
 one would want  \begin{equation}\label{thetabd}\x'_{\rm lin}(\Theta)=0.\end{equation}
  (\ref{Theta}) and (\ref{thetabd}) become, with $C_1=1$, $C_2= (1-{\scriptstyle\frac{\delta}{2}})$,
  \begin{eqnarray} \label{kbdequation}
\nu \hat\delta^{-1/2} \exp\{\left(1-{\scriptstyle\frac{\delta}{2}}\right) \Theta \}\sin(\hat\delta^{1/2} \Theta)=C_1\\\nonumber  \nu \exp\{ \left(1-{\scriptstyle\frac{\delta}{2}}\right) \Theta \}(-\cos(\hat\delta^{1/2} \Theta)) = C_2.
 \end{eqnarray}
 (\ref{kbdequation}) gives a nonlinear equation for $\delta$ in terms of $\epsilon$ from which it
 is simple to obtain estimates like
  (\ref{kappabd}).
 The only difference in the
 rigorous proof is that we will use sub- and super-solutions to get (\ref{kbdequation}), but  with 
 slightly worse $C_1$ and $C_2$.     
 
\begin{proof}[Proof of Proposition \ref{kbdprop}] First of all note that ${\rm v} = \k$ depends monotonically on $f$:  If $f_1(u)\le f_2(u)$ for all $u$ then
 the corresponding ${\rm v}(f_1)\le {\rm v}(f_2)$. 
 
{\it Upper bound.} Consider (\ref{compeq})-(\ref{bccomp})  with $f$ replaced by 
\begin{equation}\label{barf}
\bar f(u) = \left\{\begin{array}{ll}
u &\mbox{if $u \in [0,1]$} \\
2-u &\mbox{if $u>1$}.  
\end{array}
\right.
\end{equation}
The corresponding ${\rm v}=\k$ is larger  as  $\bar f\ge f$.  Call $\delta=2-{\rm v}$ and assume momentarily that $\delta< 0.2$.
The solution to 
\begin{equation}
\x''={\rm v} \x' -{\bar f}(\x),
\end{equation}
can be computed explicitly.  For
$t\le\Theta=\inf\{ t>0:\x(\Theta)=1\}$, 
$
\x\left(t\right) = \x_{\rm lin}(t)$ from (\ref{xlin}).  Now we 
consider the phase plane $(\x,\x')$.
One checks that the linearization around the saddle $(\x,\x')=(2,0)$ has stable direction  $({\varpi},1)$ and unstable direction $({\lambda}, 1)$
where, 
\begin{equation}\label{214} {\varpi}(\delta)= 1 -\frac{\delta}2 - ( 2 - \hat\delta )^{1/2},\qquad
{\lambda}= 1-\frac{\delta}2 + ( 2-\hat\delta)^{1/2} .
\end{equation}

A separatrix solution $(\x,\x')$ joins $(0,\nu)$ to $(2,0)$. It must
coincide with the stable line $
y={\varpi}x-2{\varpi}$ in the region $\x>1$, because the equation 
is linear there.  So in order for $(\x(\Theta), \x'(\Theta))$
to lie on the separatrix  we must have \begin{equation}
 \x'_{\rm lin}(\Theta)= {\varpi}\x_{\rm lin}(\Theta)  -2{\varpi},\end{equation} which is equivalent to  (\ref{kbdequation}), with
$C_1= 1$ and $C_2= -{\varpi}$.

 {\it Lower bound.}  
 Let
$\alpha  =\alpha(|\log\nu|^{-3})$ from (\ref{alpha}) and $a=1-2|\log\nu|^{-3}$ and define
\begin{equation}\label{sharpf}
 \f(u) = \left\{\begin{array}{ll}
a u &\mbox{if $u\le \alpha/2$} \\
a(\frac{\alpha}{2}- u)&\mbox{if $u>\alpha/2$}.  
\end{array}
\right.
\end{equation}
Consider the problem (\ref{compeq})-(\ref{bccomp}) with $f$ replaced by
$ \f$.
The corresponding $ {\rm v}={\rm v}(\f)$ is smaller than $\k$  because  $\f\le f$,
  Call $\delta=2- {\rm v}$.  From the upper bound we know that for sufficiently small $\nu$,
$
\hat{\delta}=\delta-\frac{\delta^2}4>0
$, in which case
the solution of \begin{equation}\label{lbode}
 \x''-  {\rm v} \x' +\f(\x)=0
 \end{equation}
  with $\x(0)=0$ and $\x'(0) = \nu $ is $\x(t)=\x_{\rm lin}(t)$ from
  (\ref{xlin})
for 
$t\le  \Theta$ when $\x_{\rm lin}(\Theta)=\alpha/2$. 
%
%
%
In order to lie on the separatrix joining the unstable fixed point $(\x,\x')=(0,\nu)$ to the saddle point $(\alpha ,0)$, we must have $(\x(\Theta), \x'(\Theta))$  on the stable line 
$
y=(1-\sqrt{2-a}) (x- \alpha) $.  This gives (\ref{kbdequation}) with $C_1= \alpha/2$ and $C_2=(\sqrt{2-a}-1-\frac{\delta}{2}) \alpha/2$. 

{\it Proof  of (\ref{kappabd}).}   Assume $\varepsilon_0$ is sufficiently small that  $\delta<1$ and.  Dividing the two equations in  (\ref{kbdequation})
 gives $-\tan(\hat\delta^{1/2}\Theta) = \hat\delta^{1/2}(C_1/C_2)$. Let  $\Theta=\hat\delta^{-1/2} \pi - \beta$.  Since $\frac{4}{\sqrt{2}\pi} x
 \le \tan x\le \sqrt{2} x$ if $x\le \pi/4$ the solution has $0\le C_1/(\sqrt2 C_2) \le \beta \le  \pi/4$.  Now the second equation of (\ref{kbdequation}) gives,
 \begin{equation}\label{preprop}
 \hat\delta \left(1-{\scriptstyle\frac{\delta}{2}}\right)^{-2} =
 \pi^2 ( |\log\nu| + \log C_2  +|\log\cos(  \hat\delta^{1/2}\beta)| +\beta)^{-2}
 \end{equation}
To get a lower bound on $\k=2-\delta$, drop the non-negative terms $|\log\cos(  \hat\delta^{1/2}\beta)|$ and $\beta$ from the right hand side and
note that $\delta\le\hat\delta \left(1-{\scriptstyle\frac{\delta}{2}}\right)^{-2}$ and
$\log C_2 = \log \{(\sqrt{ 2- a}-1)\alpha/2\}\ge -|\log\alpha| - 3\log|\log\nu| - 2\log 2$.   To get an
upper bound, note first that if $\varepsilon_0$ is sufficiently small, then
from the lower bound we have just described,
$ \hat\delta\le 1$.  Since
$\beta\le  \pi/4$, we then have
$ |\log\cos(  \hat\delta^{1/2}\beta)| \le 1$.  Also $\log C_2= \log( \sqrt{2-\hat\delta}-1-\delta/2) <0$.
Finally, if we take $\varepsilon_0$ sufficiently small, then the lower bound we just proved gives
$\delta< 10|\log\nu|^{-2}$ so that  $\hat\delta (1-\frac{\delta}{2})^{-2}\ge \delta (1+  10|\log\nu|^{-2})^{-1}$
and then (\ref{preprop}) gives
\begin{equation}
\delta \ge \pi^2( |\log\nu| + 2)^{-2} -100 |\log\nu |^{-4}
\end{equation}
which gives (\ref{kappabd}) for $\varepsilon_0$ small enough.
\end{proof}

 \subsection{Upper bound}

Consider $u$ satisfying (\ref{rkpp}) with initial data 
\begin{equation}
u(0,x)=\bar F(x)
\end{equation}
where $\bar\varrho(t,x) =\bar F (x-{\rm v}t)$ is a traveling front solution of \begin{equation}\label{compeq3}
\left\{\begin{array}{ll}
\partial_t \bar\varrho
= \partial_x^2 \bar\varrho + \bar f(\bar\varrho), & x< {\rm v} t, \\
\bar\varrho(t,x) = 0,& x\ge{\rm v} t, \end{array}\right.
\end{equation}
with $\bar f$ as in (\ref{barf}),
 \begin{equation}\label{twonineteen}
{\rm v} = \k(\nu) + |\log\epsilon|^{-3} 
\end{equation}
and \begin{equation}\label{epar}
-\bar F'(0)=
\nu = \fac\epsilon^2\end{equation}
with $0<\gamma <\!\!< 1$ to be chosen.
 
 $\bar F$ is a modified version of a traveling front from the comparison problem (\ref{compeq})-(\ref{bccomp}), with 
 a larger $\bar f>f$, a slightly larger speed, and lying slightly above the 
 separatrix connecting $(0,0)$ to $(2,0)$.   There is some convenience in using $\bar f$ instead of $f$.  It is convex.  Also, some things are explicitly
 computable.  For example,
 \begin{equation}
\label{F}
\bar F(x)= 
\left\{\begin{array}{ll}
0, \qquad           x\ge 0 ; &\\
\x_{\rm lin} (-x) ,\qquad  -\Theta\le x<0 ; &   \\
\kappa{\lambda} e^{-{\lambda}(\Theta+x)} + 2-(1+\kappa)
e^{-{\varpi}(\Theta+x)}, ~~ x< -\Theta,   & \end{array}
\right.
\end{equation} 
where $\kappa =\kappa(\nu)$ is chosen such that $F$ and $F'$ are continuous at 
$x=-\Theta$ and $\x_{\rm lin}$ and $\Theta$ are defined in (\ref{xlin})-(\ref{thetabd}).  Keep in mind that these all depend on $\epsilon$ though the dependence is not written explicitly.  One can check that $\kappa\simeq (1-2^{-1/2})|\log\nu|^{ -2}$. 
Note also that the modification of the speed in (\ref{twonineteen}) is smaller 
than the ${\mathcal O}(\frac{\log|\log\epsilon|}{|\log\epsilon|^3})$ error 
terms in the main result, Theorem \ref{t1}.
 




%
Fix  a positive integer
 $T$ and an $L>0$ and consider the hitting time
\begin{equation}\label{xidef}
\xi = \inf\{ t\in [0,T] : u(t,x) \ge \bar F(x-{\rm v} t-L){\rm ~ for ~some~ }  x\in\mathbf R\}
\end{equation}
We run $u$ up to time $\xi$, and then restart with new initial data
\begin{equation}
\bar F(x-{\rm v}\xi - L),
\end{equation} a shift of $L$ from the original comparison front. By the 
strong Markov property and the comparison theorem (Proposition \ref{comparison}), we obtain an upper bound 
$\bar u$ on the original solution of (\ref{rkpp}) with initial data $\bar{F}$.  
Repeating the process, we inductively define  a sequence of stopping times 
$\xi_{i+1}\in [\xi_i, \xi_i+T]$, and 
an upper bound $\bar u$ for all time on the solution $u$  with initial data 
$\bar F (x)$.  $\bar u$  satisfies (\ref{rkpp}) on $(\xi_i,\xi_{i+1})$
with initial data $u(\xi_i,x) = \bar F(x-{\rm v}\xi_i - Li)$. 
 
Suppose we can show that 
\begin{equation}\label{prexi}
P(\xi<T) <1/2.
\end{equation}
By the law of large numbers, the speedup of the front of $\bar u$ over that of $u$ is by a factor ${L}/{E[\xi]}$.  But from (\ref{prexi}), $E[\xi] \ge T/2$.
We obtain in this way, using (\ref{kappabd}), (\ref{twonineteen}) an upper bound on $\bar{\rm v}_\epsilon$ defined in (\ref{130a}), 
\begin{eqnarray}\bar {\rm v}_\epsilon & \le & {\rm v} +2T^{-1}L\nonumber\\ \label{preupper}
 &\le &
2- 
{\pi^2} \left( \log \varepsilon^{-2} +3\right)^{-2} + |\log\epsilon|^{-3}+  2T^{-1}L.
\end{eqnarray}
If we choose 
\begin{equation}\label{tog}
T^{-1}L \le |\log\epsilon|^{-3}
\end{equation}
we obtain the upper bound (\ref{e11}) for
  initial data  bounded above by $\bar F$.

There is  a tradeoff between $T$ and $L$.  Large $L$ in principle makes (\ref{prexi}) easier, because $\bar{F}(x-{\rm v} t -L)$ is increasing in $L$.  But then (\ref{tog}) forces us to choose $T$ large,
and it becomes harder to control $u$ on the long time interval to obtain (\ref{prexi}). 



For more general initial data, satisfying only $\int_0^\infty u_0(x)dx<\infty$, we can use the fact that at any time $t>0$, $r(t)<\infty$ a.s. 
This is proved in the special case $\sigma^2(u)=u$ in  \cite{Mueller-Perkins},
but it is well-known that the method can be adapted without too
much work to cover the present situation.  We do not give details here.  This means that we can bound $u(t,\cdot)$ by a shift
of $\bar F$, and obtain the upper bound (\ref{e11}) as before.  


This reduces the upper bound to (\ref{prexi}).  The main idea to prove 
(\ref{prexi}) is to split the  solution $u(t,x)$ of (\ref{rkpp}) 
with initial data $\bar F$ into 
\begin{equation}
u=v+w
\end{equation}
where $v(t,x)$ is the mass which does not cross $x={\rm v} t$;
\begin{equation}
\label{drkpp}\left\{\begin{array}{ll}
\partial_t v 
= \partial_x^2 v +  f(v) + \epsilon\sigma(v)\dot{W_1}, & x<{\rm v} t, \\
v(t,x) = 0,& x\ge {\rm v} t, \end{array}\right.
\end{equation}  with $v(0,x) = \bar F(x)$, 
and $w\ge 0$ is the rest.  $\dot{W_1}$ will be another space-time white noise. As usual, the SPDE is interpreted in the mild sense;
\begin{eqnarray}
\label{e26}
v(t,x) &=& \int G_{\rm v}(0,y, t,x){\bar F}(y)dy         
+ \int \int_{0}^{t}G_{\rm v}(s,y,t,x) f(v(s,y))ds dy 
               \nonumber\\
&& + \epsilon\int \int_{0}^{t}G_{\rm v}(s,y, t,x)
                 \sigma(v(s,y))W_1(dsdy)  
\end{eqnarray}
 where $G_{\rm v}(s,y,t,x)$ is the sub-probability density for a Brownian motion
with generator $\partial_x^2$, starting at $y$ at time $s$, to end at $x$ at time
$t>s$ never having entered the region $\{z\ge {\rm v} u\}$ for times $s\le u\le t$.    In Proposition \ref{comparison} in Section \ref{comparison theorem} it is shown that we can find a probability space on which such a  splitting holds.  

 We expect the solution $v(t,x)$ of
(\ref{drkpp}) to remain close to the solution $\varrho(t,x)$ of
the deterministic comparison equation (\ref{compeq}) with the same initial
data $\bar F(x)$.  Because it is a subsolution of (\ref{compeq3}) we have $\varrho(t,x)\le \bar F(x-{\rm v}t)$.
If $L$ is large enough, we can therefore expect 
$v$ not to hit $\bar F(x-{\rm v}t-L)$ for some time.

The key point now is that  if $F'(0)<\!<\epsilon^2$ then $w$ is so negligible that $u=v+w$ does not hit $\bar F(x-{\rm v} t-L)$ for some time either.  
To prove this,  we need a 
better way to represent $w$. 
One can also view the Dirichlet boundary condition in (\ref{drkpp}) as a removal of mass.
Let $A(t)$ be the mass which is removed at the boundary $x={\rm v} s$ in (\ref{drkpp}) during the time interval $0\le s\le t$.  Then we have another representation for $v$ satisfying (\ref{drkpp}) or (\ref{e26}):
\begin{equation}
\label{equ:v1}
\partial_t v= \partial_x^2 v  + f(v) 
       + \epsilon\sigma(v) \dot{W}_1 - \delta_{x-{\rm v}t}\dot A.
\end{equation}

We would like to write an equation for $w$, with a new white noise
$\dot W_2$, independent of $\dot W_1$.  If $\dot W_1$ and $\dot W_2$ are independent white noises, then \begin{equation}\sigma_1\dot W_1+\sigma_2 \dot W_2= \sqrt{\sigma^2_1+\sigma^2_2}~\dot W\end{equation} where $\dot W$ is a white noise. Hence the equation for $w$ should read,
\begin{equation}
\label{e30first}
\partial_t w = \partial_x^2 w  +   f(v+w) - f(v)
      + \epsilon\tilde\sigma\dot{W}_2+ \delta_{x-{\rm v} t}\dot A,
\end{equation} 
with initial data $w(0,x)\equiv 0$, where
\begin{equation}
\tilde\sigma(t,x, w)= \sqrt{ \sigma^2(v(t,x)+w)-\sigma^2(v(t,x)) }.
\end{equation} 
But this is only reasonable as long as $\sigma^2(v(t,x)+w)-\sigma^2(v(t,x)) $ 
remains non-negative.
In Proposition \ref{comparison} of Section \ref{comparison theorem}, it is 
shown that there exists a  probability space on which there  are white noises 
$W_1$ and $W_2$ for which (\ref{e30first}) holds, up to a stopping time
\begin{equation}
\tau= \inf\{ t\ge 0 : \exists x,~ \tilde\sigma(t,x,w(t,x))=0, ~ w(t,x)\neq 0\}, 
\end{equation}
after which the desired noise coefficient $\tilde\sigma$ might cease to make sense.

By the comparison theorem, and since $ f(v+w) - f(v)\le \|f\|_{\rm Lip} w$,  up to time $\tau$ we have $u-v=w
\le \bar w$ almost surely,
where
\begin{equation}
\label{e30a}
\partial_t \bar w = \partial_x^2 \bar w  + \|f\|_{\rm Lip}\bar w
      + \epsilon\tilde\sigma(\bar w)\dot{W}_2+ \delta_{\{x={\rm v}t\}}\dot A.
\end{equation} 
    As long as \begin{equation}
    \label{lbonfluc}\tilde\sigma^2(\bar w)\ge a^* \bar w,
    \end{equation} this is basically  a superprocess with an injection of mass at $\{ x={\rm v} t\}$.  The critical input of mass in such an equation is easily calculated to be $\mathcal O(\epsilon^2)$.
    In other words, if the rate of mass entering is $ o(\epsilon^2)$, then it is being killed by the noise in time
    $\mathcal O(1)$ with very high probability. And it suffices to show just that the 
{\em   expected} incoming mass $E[ A(t+1)-A(t)]$ is $ o(\epsilon^2)$. 

  To get such a bound,
note that by comparison $v\le \bar v$, the solution of
\begin{equation}
\label{drkpp2a}
\partial_t \bar v
= \partial_x^2 \bar v + \bar f(\bar v) + \epsilon\sigma(\bar v)\dot{W}, \qquad x \le {\rm v} t
\end{equation}
with $\bar v=0 $ for $x\ge {\rm v} t$.
Take expectation in (\ref{drkpp2a}) and use the  concavity of $\bar f$ to see that  $E[v]$ is a subsolution of (\ref{compeq3}).
 In particular,
\begin{equation}\label{vlew}
E[v(t,x)]\le \bar F(x-{\rm v} t).
\end{equation}
This can be translated into a bound
on the expected rate of incoming mass $A(t)$ as follows. Taking expectation in (\ref{equ:v1}), \begin{eqnarray} 
\label{pto}
E\left[ A(t+1)- A(t)\right] & = & \int q(t,y, t,t+1)E[v_t(y)]dy \\ \nonumber 
&&  + \int\int_t^{t+1} q(s,y,s,t+1)E[{f}(v(s,y))]dsdy
\end{eqnarray}
 where $q(s,y, u,t) = P_{s,y}(\exists r\in (u,t]: B_r \ge {\rm v}r)$ for a Brownian motion $B_r$ with 
 generator $\partial_x^2$.
Using $E[f(v)]\le E[\bar f(v)]\le \bar f(E[v]) \le \bar f(\bar \rho)$, and that $v_0(y)=\bar F(y)$, we see that 
\begin{eqnarray} 
E\left[ A(t+1)- A(t)\right] & \le & \int q(t,y,t,t+1)\bar F(y-{\rm v}t)dy \\ \nonumber 
&&  +
  \int\int_t^{t+1} q(s,y,s,t+1)\bar f(\bar F(y-{\rm v}s)) dsdy.
\end{eqnarray}
Now $\bar F(x-{\rm v} t)$ is a traveling front 
solution of (\ref{compeq}) with $\bar f$ instead of $f$.  The rate of mass removal at the boundary is proportional to the slope $\nu$ at the boundary, and hence there
is a $ C_{(\ref{bdnearkt})}<\infty$ such that,
\begin{equation}
E\left[ A(t+1)- A(t)\right]\leq  
 C_{(\ref{bdnearkt})}\nu . \label{bdnearkt}
\end{equation}

The only difficulty is maintaining (\ref{lbonfluc}).  By (\ref{sigmacond}) it 
holds as  long as $v+\bar{w}\le u^*$.  What we will do is obtain an a priori 
estimate that $v\le u^*/2$ in a strip ${\rm v}t-M \le x\le {\rm v} t$.  This 
is reasonable since we know that $v$ is close to $\rho$, which is of 
$\mathcal O(\varepsilon^2)$ there.   If \begin{equation}\label{eeeem}
M=M(u^*)\end{equation} is chosen sufficiently large, 
we can then iteratively show that $\bar{w}\le u^*/2$, and furthermore that it does 
not support the complement of a strip ${\rm v}t-1 \le x\le {\rm v} t+1$ around 
our proposed front.  This then provides us with sufficient noise to show that 
$w$ is negligible there as well. 

%

%

%
To fix  $\gamma$ and $T$, let us explain very briefly the iterative
procedure. Take $T$ to be an integer and divide up the time interval $[0,T]$ into intervals of length $1$.
The mass arriving in $[n,n+1)$ and evolving
according to (\ref{e30a}), is bounded by (\ref{bdnearkt}).
It dies before time $n+2$ with probability at least $
1-c_0\gamma$ where $c_0=c_0(a^*,\|f\|_{\rm Lip}) 
$. So
this happens for every $n=0,1,2,\ldots,T-1$, with
probability at least
\begin{equation}
\label{236}
1- c_0\gamma T.
\end{equation}
In order to have the probability in (\ref{236}) greater than $\frac{3}{4}$, 
we thus take 
\begin{equation}
\label{epar2}
\gamma =\frac14 c_0^{-1}T^{-1}.
\end{equation} 
To fix all our constants  we note that if 
\begin{equation}\label{elldef}
L=|\log\epsilon| + \log|\log\epsilon|
\end{equation} 
then we have
\begin{equation}
F(x-L)\ge F(x) + 3{\lambda} e^{-{\lambda}x}.
\end{equation}
By (\ref{tog}) we need
\begin{equation}\label{teeee}
T =\lfloor |\log\epsilon|^4 \rfloor.
\end{equation}
We have explained how the  upper bound is a consequence of the
following lemmas.

\begin{lemma} 
\label{lem1} 
Let $v$ be the solution of (\ref{drkpp}) with initial
data (\ref{F}).  Let $\varrho$ be the solution of (\ref{compeq3}) with the
same initial data.  Let $\gamma,L,T,M$ be as in (\ref{eeeem})-(\ref{teeee}).  Then
\begin{equation}\label{prechi}
P\left(\exists t\in [0,T] :   v(t,x)>\bar\varrho(t, x)+   3 {\lambda}
e^{-{\lambda}(x-{\rm v} t)}{\rm ~for~some~}x\in\mathbb R
\right)\le 1/16.
\end{equation}
\end{lemma}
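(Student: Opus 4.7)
The plan is to compare $v$ with an SPDE linearized by the piecewise-linear $\bar f$, then estimate the gap from the deterministic comparison solution by a second-moment bound carrying an exponential spatial weight. By the comparison principle and $\bar f\ge f$, we have $v\le\bar v$, where $\bar v$ is the solution of (\ref{drkpp}) with $f$ replaced by $\bar f$ and the same initial data $\bar F$. Setting $z=\bar v-\bar\varrho$ produces a process with zero initial data, zero Dirichlet value at the moving boundary $x={\rm v}t$, and a drift $\bar f(\bar v)-\bar f(\bar\varrho)$ which is Lipschitz in $z$ with constant $1$. Absorbing this drift via a Gronwall-type argument through the Dirichlet resolvent, the mild form of $z_+$ is dominated by the stochastic convolution
\begin{equation*}
I(t,x) \;=\; \epsilon\int_0^t\!\!\int K(s,y,t,x)\,\sigma(\bar v(s,y))\,W_1(ds,dy),
\end{equation*}
where $K$ is the Green's function for $\partial_t-\partial_x^2-1$ on $\{x<{\rm v}t\}$ with Dirichlet boundary at $x={\rm v}t$.

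The second moment of $I$ is bounded using the concavity of $\bar f$ together with Jensen applied to the mild form of $\bar v$ (which gives $E[\bar v]\le\bar\varrho$), and then the assumption $\sigma^2(u)\le u$; this yields $E[\sigma^2(\bar v)]\le\bar\varrho$. The deterministic solution $\bar\varrho$ is itself close to a traveling wave of the comparison problem (\ref{compeq3}) at speed ${\rm v}$, with shape inherited from $\bar F$ in (\ref{F}). Inserting this shape into
\begin{equation*}
E[I(t,x)^2]\;\le\;\epsilon^2\int_0^t\!\!\int K(s,y,t,x)^2\,\bar\varrho(s,y)\,dy\,ds
\end{equation*}
and using the Gaussian decay of $K$ together with its linear vanishing at the boundary produces a pointwise bound of order $\gamma\epsilon^4\lambda^2\,e^{-2\lambda(x-{\rm v}t)}$, so that after multiplying by the weight $e^{\lambda(x-{\rm v}t)}$ the $L^2$-norm of the weighted gap sits well below the target $3\lambda$ once $\gamma$ from (\ref{epar2}) is taken small enough.

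To pass from pointwise moments to the uniform probability statement (\ref{prechi}) on $[0,T]\times\mathbb R$ with $T=\lfloor|\log\epsilon|^4\rfloor$, I would use Burkholder--Davis--Gundy to boost to high moments of $I$, a Garsia--Rodemich--Rumsey / factorization argument in the exponentially weighted sup-norm (exploiting the standard SPDE Hölder regularity $\alpha<1/2$ in space, $\beta<1/4$ in time), and a union bound over a space-time grid of mesh $O(1)$; the extra $\gamma$-factor absorbs the polylogarithmic loss from the discretization. The main obstacle is the length of the time horizon: the linearized operator $\partial_t-\partial_x^2-1$ is spectrally unstable on the whole line, and only the Dirichlet condition at the moving boundary opens a gap. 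The slightly supercritical choice ${\rm v}=\k(\nu)+|\log\epsilon|^{-3}$ in (\ref{twonineteen}) generates a spectral gap of size $|\log\epsilon|^{-3}$ for this Dirichlet problem, which just beats the time horizon $|\log\epsilon|^4$; making this gap quantitative so that the variance estimate above persists uniformly in $t\le T$ is the technical heart of the argument and is precisely what dictates the form of the shift (\ref{twonineteen}).
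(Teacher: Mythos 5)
Your opening moves match the paper's: replace $v$ by $\bar v$ solving the equation with $\bar f$ (comparison), form the difference with $\bar\varrho$, dominate it by a stochastic convolution, and finish with a chaining/large-deviation bound in an exponentially weighted sup-norm over strips $x-{\rm v}t\in[b-1,b]$ (the paper's Lemmas \ref{ldlemma1}, \ref{variance-lemma}, \ref{lemma9a}). But the step you yourself identify as "the technical heart" — surviving the time horizon $T=\lfloor|\log\epsilon|^4\rfloor$ — is where your argument breaks, and the mechanism you invoke does not exist.

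You propose to absorb the Lipschitz-$1$ drift into the Green's function of $\partial_t-\partial_x^2-1$ on $\{x<{\rm v}t\}$ and claim the Dirichlet condition at the moving boundary, helped by the shift ${\rm v}=\k(\nu)+|\log\epsilon|^{-3}$, opens a spectral gap of size $|\log\epsilon|^{-3}$. In the moving frame the relevant operator is $\partial_x^2+{\rm v}\partial_x+1$ on the half-line $x<0$ with Dirichlet data at $0$; conjugating by $e^{-{\rm v}x/2}$ reduces it to $\partial_x^2+\hat\delta$ with $\hat\delta=1-{\rm v}^2/4>0$. The Dirichlet Laplacian on an unbounded half-line has no gap (its spectrum is all of $(-\infty,0]$), so the top of the spectrum is $+\hat\delta\simeq\pi^2|\log\epsilon|^{-2}$: the linearized semigroup \emph{grows} like $e^{\hat\delta t}$, in any exponential weight as well (the symbol $\lambda^2-{\rm v}\lambda+1$ has minimum $\hat\delta>0$ for ${\rm v}<2$). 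The $|\log\epsilon|^{-3}$ increase of the speed perturbs $\hat\delta$ by a lower-order amount and cannot change its sign. Over $t\le|\log\epsilon|^4$ this produces a factor $e^{\pi^2|\log\epsilon|^2}$ that no choice of $\gamma$ absorbs. (Your variance bound $E[\sigma^2(\bar v)]\le\bar\varrho=\mathcal O(\gamma\epsilon^2)$ near the front is also only an expectation; the uniform statement needs the pathwise localization $\sigma^2(t,x)\le\bar\varrho+3e^{-\lambda(x-{\rm v}t)}$ via a stopping time, as the paper arranges before invoking its chaining lemma — but that is repairable.)

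The paper's actual trick at this point is structural, not spectral: since $\bar f(u)=\min(u,2-u)$, one has $\bar f(\bar v)-\bar f(\bar\varrho)\le(2-\bar v)-0\le 2-(\bar v-\bar\varrho)$, so the difference is dominated by the solution of $\partial_t y=\partial_x^2y+2-y+\epsilon\sigma(\bar v)\dot W_1$. The integrating factor $e^t$ then yields $\bar v-\bar\varrho\le 2+\epsilon Z$ with the \emph{decaying} kernel $e^{-(t-s)}G_{\rm v}$, which is integrable uniformly over the long horizon; the additive constant $2$ is harmless because the allowed discrepancy $3\lambda e^{-\lambda(x-{\rm v}t)}$ exceeds $3\lambda>2$ on $x\le{\rm v}t$. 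Without this (or an equivalent) device for converting the unstable linearization into a damped one, your proof does not close.
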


\begin{lemma} \label{lemma2} Under the same conditions as in Lemma
\ref{lem1},
\begin{equation}
\label{253}
P\left(\exists t\in [0,T] :   v(t,x)>u^*/2 {\rm ~for~some~}x\in({\rm v} t -M,{\rm v} t)
\right)\le 1/8.
\end{equation}
\end{lemma}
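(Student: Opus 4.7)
The plan is to combine a deterministic upper envelope for $v$ with an $L^\infty$ concentration estimate for its stochastic fluctuations.

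By Proposition~\ref{comparison}, $v \le \bar v$, where $\bar v$ solves (\ref{drkpp}) with the convex majorant $\bar f$ of (\ref{barf}) in place of $f$. Taking expectation in the mild form and applying Jensen's inequality with the concavity of $\bar f$, $E[\bar v(t,x)]$ is a sub-solution of (\ref{compeq3}). Since ${\rm v}$ in (\ref{twonineteen}) strictly exceeds the natural speed $\k(\nu)$, the profile $\bar F(x - {\rm v} t)$ is a super-solution of (\ref{compeq3}) with matching initial data, whence $E[\bar v(t,x)] \le \bar F(x - {\rm v} t)$. For $x \in ({\rm v} t - M, {\rm v} t)$ with $M = M(u^*)$ the $O(1)$ constant required for the $\bar w$ iteration step of the outline, the explicit form (\ref{F}) of $\bar F$ yields $\bar F(-M) = O(\nu M) = O(\epsilon^2)$ for $\epsilon$ small, so the deterministic contribution to $v$ on the strip is of order $\epsilon^2$, far below $u^*/2$.

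To handle the stochastic fluctuations, introduce the stopping time $\chi := \inf\{\, t \in [0,T] : v(t,x) > u^*/2 \text{ for some } x \in ({\rm v} t - M, {\rm v} t)\,\}$. On $\{t \le \chi\}$, $v \le u^*/2 < u^*$ in the strip, so $\sigma^2(v) \le v$ and $f(v) \le v$ apply. Writing the mild form of the SPDE for the difference $v - \varrho$ (where $\varrho$ is the deterministic Dirichlet KPP with initial data $\bar F$) and applying Burkholder--Davis--Gundy to the stochastic integral driven by $\epsilon\sigma(v)\,dW$, one obtains a second-moment bound
\[ E\bigl[(v(t,x) - \varrho(t,x))^2 \,\mathbf{1}_{\{t \le \chi\}}\bigr] = O(\epsilon^2 |\log\epsilon|^c) \]
for $(t,x)$ in the strip, for some $c = O(1)$. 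To upgrade this pointwise $L^2$ bound to an $L^\infty$ bound on $\{0 \le t \le T,\, x \in ({\rm v} t - M, {\rm v} t)\}$, I discretize at spacing $\Delta$ giving $N = O(TM\Delta^{-2})$ grid points, apply Chebyshev at each grid point, and use the standard H\"older continuity of $v$ (exponent $\alpha < 1/2$ in $x$, $\beta < 1/4$ in $t$) to bridge in between. For $\epsilon$ sufficiently small and $\Delta$ chosen appropriately, the union bound gives $P(\chi < T) \le 1/8$; at $\chi$, $v$ hits $u^*/2$, forcing $v - \varrho \ge u^*/2 - \bar F(-M) \ge u^*/4$ somewhere on the strip, an event excluded by the concentration estimate.

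The main obstacle is the self-referential nature of the moment estimate: the bound $E[(v-\varrho)^2 \mathbf{1}_{t \le \chi}]$ relies on $v \le u^*/2$ in the strip, which is precisely the conclusion we are after. The stopping time $\chi$ is the standard device for this circularity. Beyond that, the delicate point is that the moment bound must be sharp enough to survive the union bound over $O(|\log\epsilon|^4)$ grid points; this requires careful use of the absorbing heat kernel $G_{\rm v}$ (whose survival probability decays for deeply interior sources, suppressing the otherwise problematic contribution from the bulk of $v$ where $E[v]$ can be $O(1)$ rather than $O(\epsilon^2)$) and uniform control of the H\"older constants in $\epsilon$.
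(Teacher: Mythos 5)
Your overall architecture (a deterministic envelope for the mean, a pointwise tail bound on a grid in the strip, a union bound, and a continuity estimate to bridge grid points) is the same as the paper's, which proves the stronger statement (\ref{prechi2second}) with $u^*/40$ in place of $u^*/2$. But the pointwise bound you propose has a genuine gap. You claim
$E\bigl[(v(t,x)-\varrho(t,x))^2\,\mathbf{1}_{\{t\le\chi\}}\bigr]=O(\epsilon^2|\log\epsilon|^c)$
via BDG on the mild formulation. The Duhamel representation of $v-\varrho$ contains the drift term $\int\!\!\int_0^t G_{\rm v}(s,y,t,x)\,[f(v)-f(\varrho)](s,y)\,ds\,dy$, which involves $|v-\varrho|$ at \emph{all} points behind the front at earlier times, not just on the strip; your stopping time $\chi$ controls $v$ only on the strip, and in the bulk Lemma \ref{lem1} permits $v-\varrho$ to be as large as $3\lambda e^{-\lambda(x-{\rm v}t)}$, i.e.\ $O(1)$ or worse. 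Closing the second-moment estimate therefore forces a Gronwall iteration over $[0,T]$ with $T=\lfloor|\log\epsilon|^4\rfloor$, producing a factor $e^{2\|f\|_{\rm Lip}T}=\epsilon^{-c|\log\epsilon|^3}$ that destroys any power of $\epsilon$. The paper avoids this entirely: it never estimates the difference $v-\varrho$ in $L^2$ for this lemma, but instead uses the first-moment bound (\ref{vlew}), $E[v(t,x)]\le\bar F(x-{\rm v}t)\le \nu Me^M$ on the strip (which holds for all $t$ with no Gronwall factor, by concavity of $\bar f$ and the maximum principle applied to $E[\bar v]$), and then applies Markov's inequality at each grid point. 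The $O(\nu)$ from Markov exactly cancels the $O(\nu^{-1})$ count of grid points, which is why the union bound closes.

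The bridging step is also not a matter of citing ``standard H\"older continuity of $v$ with uniform constants.'' Uniformity of the modulus of continuity over the whole strip $\{0\le t\le T,\ {\rm v}t-M\le x\le{\rm v}t\}$ with $T=|\log\epsilon|^4$ is precisely the hard large-deviation content of the paper's Lemmas \ref{ldlemma1}, \ref{variance-lemma} and \ref{lemma9a} (in particular (\ref{oldset})): one must insert the integrating factor $e^{-(t-s)}$ into the stochastic convolution, exploit the exponential decay of $G_{\rm v}$ encoded in (\ref{new-eq-6}), and bound the a priori variance $\sigma^2(v)\le\underline F(x-{\rm v}t)$ by the self-improving argument around (\ref{bdonsig})--(\ref{newbdons}). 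The deterministic part of the increment is handled separately by the elementary kernel estimates $\Omega_1,\Omega_2,\Omega_3$ in (\ref{64}). So: replace your Chebyshev-on-$v-\varrho$ step by Markov on $v$ itself using (\ref{vlew}), and replace the appeal to standard H\"older regularity by the quantitative increment bound of Lemma \ref{lemma9a}; as written, neither of your two key estimates is available.
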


\begin{lemma} \label{lemma3a} Under the same conditions as in Lemma \ref{lem1}, let
$\bar w$ be the solution of (\ref{e30a}) where $A$ is defined in (\ref{equ:v1})
\begin{equation}
\label{254aa}
P\big(\exists t\in [0,T] :  
\sup_{x\in [{\rm v} t-1,{\rm v} t+1]^c}  \bar w(t,x)>0{\rm ~~or}\sup_{x\in [{\rm v} t-1,{\rm v} t+1]} \bar w(t,x)\geq u^*/2\big)  \le 1/4.
\end{equation}
\end{lemma}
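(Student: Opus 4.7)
The plan is to exploit that on the good event from Lemma~\ref{lemma2}, the SPDE (\ref{e30a}) is dominated pathwise by a superprocess-type SPDE whose injected mass of order $\gamma\epsilon^2$ per unit time is far below the critical mass scale $\epsilon^2$; each burst of input should therefore die out in time at most $1$ before $\bar w$ reaches $u^*/2$ or spreads outside $[{\rm v}t-1,{\rm v}t+1]$, and the choice $\gamma T=\tfrac{1}{4}c_0^{-1}$ makes a union bound over the $T$ bursts give the claimed $1/4$ failure probability.

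First I would condition on the event $\Omega_1$ from Lemma~\ref{lemma2} (losing $\le 1/8$) and introduce the stopping time $\tau^*=\inf\{t\in[0,T]:\sup_x\bar w(t,x)\ge u^*/2\}$; on $\Omega_1\cap\{t<\tau^*\}$, whenever $x$ lies in the (to be shown) support of $\bar w$, one has $v(t,x)+\bar w(t,x)\le u^*$, so by (\ref{sigmacond}) $\tilde\sigma^2(\bar w)\ge a^*\bar w$ and (\ref{e30a}) is dominated by an SPDE with linear drift $\|f\|_{\rm Lip}\bar w$ and superprocess noise $\epsilon\sqrt{a^*\bar w}$, injected along $\{x={\rm v}t\}$ at rate $\dot A$. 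Using the branching structure of the dominating superprocess, I would then decompose by the unit time interval in which mass enters: for each $n\in\{0,\ldots,T-1\}$ let $\bar w_n$ be the piece generated by mass injection during $[n,n+1)$, so that $\bar w\le\sum_n\bar w_n$ before $\tau^*$.

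For each $n$, (\ref{bdnearkt}) gives $E[\int\bar w_n(n+1,x)\,dx]\le C_{(\ref{bdnearkt})}\gamma\epsilon^2$. A Feller branching comparison then dominates the total mass $\int\bar w_n(t,x)\,dx$ by a continuous-state branching diffusion with branching rate $a^*\epsilon^2$ and drift $\|f\|_{\rm Lip}$, whose extinction estimate yields $P(\bar w_n\not\equiv 0\text{ at time }n+2)\le c_0\gamma$. A companion support-propagation estimate (via historical processes or a direct compact-support modulus) shows that with probability $1-O(\gamma)$ the support of $\bar w_n$ stays within distance $1$ of the injection segment $\{({\rm v}s,s):s\in[n,n+1)\}$. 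A union bound over $n$ together with $\gamma T=\tfrac{1}{4}c_0^{-1}$ keeps the combined conditional failure probability on $\Omega_1$ under $1/8$, which added to $P(\Omega_1^c)\le 1/8$ gives the desired $1/4$.

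The main obstacle is the circular dependence between $\bar w$ and its own noise coefficient: the superprocess lower bound $\tilde\sigma^2\ge a^*\bar w$ requires $v+\bar w\le u^*$ on the support of $\bar w$, but the bound $\bar w\le u^*/2$ is part of what we are trying to prove. The stopping-time device $\tau^*$ breaks this circularity, provided one verifies that $\{\tau^*\le T\}$ is contained in the union of per-burst failure events; this is built into the construction, since for any $\bar w$ to reach $u^*/2$ at some point, at least one burst must either fail to extinguish by time $n+2$ or wander outside the unit strip, both already controlled. A secondary delicate point is that spatial range control for a superprocess is strictly finer than total-mass control and requires a historical-process (or compact-support-propagation) argument, though for uniformly bounded branching rates this is standard.
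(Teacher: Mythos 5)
Your overall architecture --- decompose the immigration into unit time intervals, kill each burst by the critical-mass computation, control its support, and union-bound over the $T$ bursts using $\gamma T=\frac14 c_0^{-1}$ --- is exactly the paper's (Section \ref{killing} constructs pieces $\bar w_k$ on overlapping intervals $(k-1,k+1]$ with immigration only on $(k-1,k]$, and uses Lemma \ref{lem:l4} for extinction and Lemma \ref{lemmaB1} for the support). But there is a genuine gap at the step you describe as ``built into the construction'': the claim that if $\bar w$ reaches $u^*/2$ then some burst must either fail to die by time $n+2$ or leave the unit strip is false. A burst can have tiny total mass, go extinct on schedule, and stay inside the strip while its \emph{sup norm} transiently exceeds $u^*/2$; extinction-time and support control say nothing about $\sup_x\bar w_n(t,x)$. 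Since $\sup\bar w\ge u^*/2$ is both one of the bad events in (\ref{254aa}) and the event whose absence you need to keep $\tilde\sigma^2\ge a^*\bar w$ valid, the circularity is not actually broken by $\tau^*$ alone. The paper closes this with a third family of stopping times $\tau_{k,2}$ (the sup of $\bar w_k+\bar w_{k-1}+\tilde v$ reaching $u^*/10$) together with a separate large deviation estimate, Lemma \ref{lemma4}$(ii)$ and (\ref{620first}), which bounds the sup over a space-time window of a process satisfying $\partial_t u\le\partial_x^2 u+u+\epsilon\sigma\dot W$ with $\sigma^2\le 3\bar F(x-{\rm v}t-L)$; that estimate is where the smallness $\gamma$ of the injected mass is converted into sup-norm control.

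Two smaller points. First, ``dominated pathwise by a superprocess-type SPDE'' is not a legitimate comparison: solutions of SPDEs with different noise coefficients cannot be ordered pathwise. The correct fix (and the paper's) is to prove the extinction and support lemmas for an arbitrary adapted coefficient $\vartheta\ge\vartheta_0$, which the Laplace-functional and supermartingale arguments of Lemmas \ref{lem:l4} and \ref{lemmaB1} allow. Second, the additive decomposition $\bar w\le\sum_n\bar w_n$ is not automatic for the nonlinear coefficient $\tilde\sigma$; the paper defines the pieces jointly, with $\sigma_k^2=|\sigma^2(\tilde v+\bar w_k+\bar w_{k-1})-\sigma^2(\tilde v+\bar w_{k-1})|\vee a^*\bar w_k$, so that the sum represents $\bar w$ up to the relevant stopping time. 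Finally, the paper conditions on Lemma \ref{lemma4}$(i)$ (the bound $v\le\underline F(x-{\rm v}t)$ on all of $\mathbf R$), which is strictly stronger than Lemma \ref{lemma2}, precisely because the bound on $\sigma^2(\tilde v)$ is needed over the whole line in the large deviation step.
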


\subsection{Lower bound}\label{proof of the lower bound}

The proof of the lower bound uses a more standard method; comparison to 
oriented percolation  \cite{bradurr88}.  Similar arguments were used by Conlon 
and Doering \cite{cd04} to prove their lower bound.  The improvement here comes 
from the use of the special comparison front from (\ref{lbode}) and refined 
large deviation estimates.

  For ${\rm v},L>0$ let
$
G_{{\rm v},L} (s,y,t,x)
$
 be the sub-probability density in $x$ at time $t$ for a Brownian motion $B_u$, $s\le u\le t$
with generator $\partial_x^2$, starting at $B_s=y$,  killed if it  enters the region $|z|\ge  {\rm v} u +L$, $s\le u\le t$. If $\varrho(t,x) $ is a given function, let
\begin{equation}
(\goo \varrho)( t,x) =\int \int_0^t G^2_{{\rm v}, L} (s,y,t,x)
\varrho(s,y) dsdy 
\end{equation} Note that this makes sense since we are in one dimension.  The lower bound is based on the following simple lemma about the deterministic equation.
Let 
\begin{equation}\label{underlinef}
 \underline{f}(u) = \left\{\begin{array}{ll}
(1-|\log\epsilon|^{-3})u &\mbox{if $u\le \alpha/2$} \\
(1-|\log\epsilon|^{-3})(\frac{\alpha}{2}- u)&\mbox{if $u>\alpha/2$}.  
\end{array}
\right.
\end{equation}
From the definition (\ref{alpha}) of $\alpha$, we have  \begin{equation}\underline{f}(u) \le f(u) \qquad {\rm whenever} \quad f(u)\ge 0.\end{equation} Let 
\begin{equation}
\varepsilon^2 = \gamma \epsilon^2
\end{equation}
with $1<\!\!<\gamma$ and
 ${\rm v}={\rm v}_{\rm com}(\varepsilon)$ as in (\ref{compeq}) - (\ref{bccomp}).

\begin{lemma} 
\label{lemma3}   There exist $\epsilon_0>0$,  $C_{(\ref{2.12})}<\infty$, $0<L\le |\log\epsilon|$, and  
${\underline \v}_0( x)$ supported on $[-L,L]$ with $0\le{\underline \v}_0(x)\le \alpha(|\log\epsilon^2|^{-3})$ such that if 
  $\epsilon<\epsilon_0$, $\gamma \ge C_{(\ref{2.12})}|\log\epsilon|^{10} $ and $r>1$,
 the solution ${\underline \v}(t,x)$ 
on $0\le t\le 1$ of 
\begin{equation}
\label{comparison:eq:lb}
\left\{\begin{array}{ll}
\partial_t {\underline \v} = \partial_x^2 {\underline \v} + {\underline f}({\underline \v}) , &
           |x|<L+{\rm v} t  \\
 {\underline \v}(t,x) = 0,&  |x|\ge L+{\rm v} t,      \end{array}
\right.
\end{equation}   
with ${\underline \v}(0,x) ={\underline \v}_0( x)$, and $\underline{f}$ as in 
(\ref{underlinef}), satisfies, for
${\rm v}'={\rm v} +  |\log\epsilon|^{-3} $,
    \begin{equation}
{\underline \v}(1,x) - r\epsilon \sqrt{ {\goo }
\underline\v(1,x) } \ge {\underline \v}_0( x- {\rm v}' )
\qquad x\in [{\rm v}'-L,{\rm v}'+L].\label{2.12}
\end{equation}
\end{lemma}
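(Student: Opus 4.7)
The plan is to build $\underline{\v}_0$ from the traveling-front profile $F_\sharp$ of the Dirichlet problem (\ref{compeq})-(\ref{bccomp}) with $f$ replaced by $\underline f$, exploit the strict speed gap ${\rm v}_\sharp>{\rm v}'$ together with the nonlinear growth of $\underline\v$ toward the upper zero of $\underline f$ to generate a deterministic excess at time $1$, and show this excess dominates $r\epsilon\sqrt{\goo\underline\v}$ once $\gamma\ge C|\log\epsilon|^{10}$. The first ingredient is the speed gap itself: I would rerun the lower-bound phase-plane analysis from the proof of Proposition~\ref{kbdprop} with $\f$ from (\ref{sharpf}) replaced by $\underline f$ from (\ref{underlinef}), whose linear slope $1-|\log\epsilon|^{-3}$ is strictly larger than the $1-2|\log\nu|^{-3}$ of $\f$; the computation (\ref{kbdequation})--(\ref{preprop}) then produces a Dirichlet traveling wave $F_\sharp$ with boundary slope $\varepsilon^2$ and speed ${\rm v}_\sharp\ge {\rm v}+2|\log\epsilon|^{-3}={\rm v}'+|\log\epsilon|^{-3}$ for $\epsilon_0$ small. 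Next, I choose $L$ of order $|\log\epsilon|$ so that $F_\sharp$ has reached its plateau by distance $L$, and define $\underline{\v}_0$ as a symmetric $C^1$ truncation of $F_\sharp$ (rescaled if necessary so that $\underline{\v}_0\le\alpha(|\log\epsilon^2|^{-3})$), tapered smoothly to $0$ on end-strips of width $B:=|\log\epsilon|^{-3}$. The buffer $B$ is crucial: it makes the shifted support $[{\rm v}'-L+B,L+{\rm v}]$ fit exactly inside the time-$1$ Dirichlet domain $[-L-{\rm v},L+{\rm v}]$.

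For the deterministic margin, the function $\tilde\varrho(t,x):=F_\sharp(x-L+B-{\rm v}'t)$ (extended by $0$ outside its natural support) is a subsolution of the reaction-diffusion equation because
\begin{equation}
\partial_t\tilde\varrho-\partial_x^2\tilde\varrho-\underline f(\tilde\varrho)=({\rm v}_\sharp-{\rm v}')F_\sharp'(x-L+B-{\rm v}'t)\le 0,
\end{equation}
and the choice of $B$ forces $\tilde\varrho\le 0$ on the outer Dirichlet boundary $x=L+{\rm v}t$ for all $t\in[0,1]$. Working on the symmetric half-problem $x\ge 0$ (or with a symmetrized version of $\tilde\varrho$) and invoking the PDE comparison principle yields $\underline\v(1,x)\ge\underline{\v}_0(x-{\rm v}')$ pointwise, while the non-negative Duhamel source $({\rm v}_\sharp-{\rm v}')(-F_\sharp')$ for $w=\underline\v-\tilde\varrho$ produces a strict excess. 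Two mechanisms contribute: in the bulk of the shifted support, the ODE $\dot u=\underline f(u)$ drives the plateau upward by an amount of order $\alpha\sim|\log\epsilon|^{-3}$ during unit time, and near the front the Duhamel source contributes at least $({\rm v}_\sharp-{\rm v}')\alpha\sim|\log\epsilon|^{-6}$.

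For the stochastic correction, the uniform bound $\underline\v\le C|\log\epsilon|^{-3}$ and the short-time $L^1$-integrability $\int\!\int_0^1 G^2_{{\rm v},L}(s,y,1,x)\,dy\,ds\le\int_0^1(8\pi(1-s))^{-1/2}\,ds<\infty$ give $\goo\underline\v(1,x)\le C|\log\epsilon|^{-3}$ and hence $r\epsilon\sqrt{\goo\underline\v(1,x)}\le Cr\epsilon|\log\epsilon|^{-3/2}$, comfortably dominated by the bulk margin $|\log\epsilon|^{-3}$. In the thin boundary layer adjacent to $x=L+{\rm v}$, the taper forces $\underline{\v}_0(x-{\rm v}')=0$, so (\ref{2.12}) reduces to $\underline\v(1,x)\ge r\epsilon\sqrt{\goo\underline\v(1,x)}$; both sides vanish at the boundary, with $\underline\v(1,x)\gtrsim\varepsilon^2 d$ and $\goo\underline\v(1,x)\lesssim|\log\epsilon|^{-3} d^2$ (by Dirichlet suppression of $G_{{\rm v},L}$, where $d=L+{\rm v}-x$), so matters reduce to $\varepsilon^2\gtrsim r\epsilon|\log\epsilon|^{-3/2}$, which is precisely what $\gamma\ge C|\log\epsilon|^{10}$ ensures. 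The main obstacle is exactly this boundary-layer matching: it demands sharp heat-kernel estimates for $G_{{\rm v},L}$ rather than the crude uniform bounds used for the bulk, and the generous polynomial slack $|\log\epsilon|^{10}$ on $\gamma$ is what provides the quantitative room needed for uniformity in $x$ and in the parameter $r>1$.
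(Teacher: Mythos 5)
Your construction is organized around the claim that the Dirichlet travelling wave $F_\sharp$ for $\underline f$ with boundary slope $\varepsilon^2$ moves with speed ${\rm v}_\sharp\ge {\rm v}+2|\log\epsilon|^{-3}$, and this claim is false: the inequality goes the other way. The paper records immediately after (\ref{underlinef}) that $\underline f\le f$, and the first line of the proof of Proposition \ref{kbdprop} notes that for a fixed boundary slope the front speed is monotone increasing in the nonlinearity; hence ${\rm v}_\sharp\le {\rm v}_{\rm com}(\varepsilon)={\rm v}<{\rm v}'$. Your justification via the slopes also fails quantitatively: with $\gamma=\mathcal O(|\log\epsilon|^{10})$ one has $|\log(\gamma\epsilon^2)|\approx 2|\log\epsilon|$, so $2|\log(\gamma\epsilon^2)|^{-3}\approx\tfrac14|\log\epsilon|^{-3}<|\log\epsilon|^{-3}$, i.e.\ the slope $1-|\log\epsilon|^{-3}$ of $\underline f$ is \emph{smaller} than the slope of $\f$ from (\ref{sharpf}); and even if it were larger, that would only compare ${\rm v}_\sharp$ with ${\rm v}(\f)$, which is itself below ${\rm v}$. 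Consequently your identity $\partial_t\tilde\varrho-\partial_x^2\tilde\varrho-\underline f(\tilde\varrho)=({\rm v}_\sharp-{\rm v}')F_\sharp'$ is a product of two nonpositive factors, so $\tilde\varrho$ is a \emph{super}solution, the comparison runs the wrong way, and no deterministic excess is produced. This is not a repairable detail: the strategy of outrunning ${\rm v}'$ with the natural $\underline f$-front cannot work, because that front is strictly slower than ${\rm v}$.

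The paper's mechanism is different. One does not ask the front to travel at speed ${\rm v}'$; one takes the profile $\x$ from the phase-plane analysis for the still smaller nonlinearity $\f$ and checks that $\underline q(t,x)=e^{t|\log\epsilon|^{-3}}\x(L+{\rm v}t-|x|)$ is a subsolution of (\ref{comparison:eq:lb}), the gap between $\f$ and $\underline f$ being what pays for the multiplicative factor $e^{t|\log\epsilon|^{-3}}$. That exponential growth over unit time is the entire source of the margin: near the front it combines with $\x'_{\rm lin}\ge\varepsilon^2 e^{\frac12(L+{\rm v}-|x|)}$ to absorb the extra shift ${\rm v}'-{\rm v}=|\log\epsilon|^{-3}$ and still leave an excess of order $|\log\epsilon|^{-3}\varepsilon^2e^{\frac12(L+{\rm v}-|x|)}$, while in the bulk it gives $(e^{|\log\epsilon|^{-3}}-1)\,\alpha/2$. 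Your treatment of the stochastic term near the boundary is also off: there ${\underline\v}=\mathcal O(\varepsilon^2)$, so ${\goo}{\underline\v}(1,x)=\mathcal O(\varepsilon^2e^{-(x-L-{\rm v})})$ rather than $\mathcal O(|\log\epsilon|^{-3}d^2)$, and your reduction to ``$\varepsilon^2\gtrsim r\epsilon|\log\epsilon|^{-3/2}$'' would cap $r$ at $\gamma\epsilon|\log\epsilon|^{3/2}\to0$, which is vacuous for the intended application $r\sim|\log\epsilon|^{3/2}$; the correct constraint, obtained by comparing the excess above with $r\epsilon\sqrt{{\goo}{\underline\v}}$, is $r\lesssim\gamma^{1/2}|\log\epsilon|^{-7/2}$, as in (\ref{cond17}).
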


\begin{proof}  
%
%
%
We follow the notation and construction from the proof of the lower bound of Proposition \ref{kbdprop}.  Let  $\alpha$, $a$, $\x$ and $\Theta$ all be as in the proof of the lower bound of Proposition \ref{kbdprop}.   
We claim that (\ref{2.12}) holds with  $L=\Theta$ and
\begin{equation}
{\underline \v}_0(x) = \left\{\begin{array}{ll}
\x(L-|x|) &
         0\le |x|<L, \\
0  & |x|\ge L  .       \end{array}
\right. 
\end{equation}  
To prove this, note that the solution $\underline{\v} (t,x)$ of 
(\ref{comparison:eq:lb}) 
satisfies  
$\underline{\v} (t,x) \le \bar{q}(t,x) = e^{ t|\log\epsilon|^{-3}} \min\{ \x(L+{\rm v} t - x), \x(L)\}$ since ${\underline \v}_0(x)\le \x(L-x)$,  $x\in {\bf R}$,
and $\bar{q}(t,x)$ is a supersolution of (\ref{comparison:eq:lb}), 
and $\underline{\v} (t,x) \ge \underline{q}(t,x)$ where \begin{equation}
\underline{q}(t,x) = \left\{\begin{array}{ll}
e^{ t|\log\epsilon|^{-3}}\x(L+{\rm v} t-|x|) &
           {\rm v} t \le |x|<L+{\rm v} t  \\
e^{ t|\log\epsilon|^{-3}}\x(L) &  |x|\le{\rm v} t \\
0  & |x|\ge {\rm v} t +L        \end{array}
\right. 
\end{equation} 
since $\underline{q}(t,x)$ is a subsolution of (\ref{comparison:eq:lb})  with the same initial data. 
So
\begin{eqnarray} && {\underline \v}(1,x) -{\underline \v}_0(x-{\rm v}') \ge 
\underline{q}(1,x)- {\underline \v}_0(x-{\rm v}') \\&&~
\ge   \left\{\begin{array}{ll}
\x_{\rm lin}(L+{\rm v}-|x|)-\x_{\rm lin}(L+{\rm v}'-|x|) &
         |x|\in ({\rm v}, {\rm v}'+L], \\
(e^{|\log\epsilon|^{-3}} -1)\x_{\rm lin}(L)  & |x|\le {\rm v}  ,       \end{array}
\right. \nonumber
\end{eqnarray}
Here we used that $\x(t) = \x_{\rm lin}(t) $ for $t\le L$.    For   $|x|\in ({\rm v}, {\rm v}'+L]$ we now use that 
if $\delta<1/2$ then $\x'_{\rm lin} \ge  \varepsilon^2e^{ \frac12 (L+{\rm v} - |x|)}$  there
and
 ${\rm v}-{\rm v}' = |\log\epsilon|^{-3}$ to get a lower bound.
 For $|x|\ge {\rm v}$ we use $e^{|\log\epsilon|^{-3}} -1\ge |\log\epsilon|^{-3}$ and   $\x_{\rm lin}(L) = \alpha/2$.
This gives
 \begin{equation} 
 {\underline \v}(1,x) -{\underline \v}_0(x-{\rm v}')\ge \left\{\begin{array}{ll}
|\log\epsilon|^{-3}\varepsilon^2 e^{\frac12 (1-\frac{\delta}{2})(L+{\rm v} - |x|)} &
         |x|\in ({\rm v}, {\rm v}'+L], \\
|\log\epsilon|^{-3}\alpha/2  & |x|\le {\rm v}  ,       \end{array}
\right. 
 \end{equation}
From the explicit form of $\x_{\rm lin}$, and using $G_{{\rm v},L}\le G$, as long as  $\exp\{|\log\epsilon|^{-3}\}\le 2$ and $\delta<1$,
\begin{eqnarray}
(\goo \bar{q})( 1,x) & \le  & 2 \int_0^1 \int_{-\infty}^{{\rm v} s} \frac{ e^{ -\frac{ (y-x)^2}{2(1-s)} }}{4\pi(1-s)} \min\{ \varepsilon^2 \delta^{-1/2} e^{{\rm v} s - y+L} , \alpha/2\} dsdy \nonumber\\
& \le & \min\{4 \varepsilon^2 \delta^{-1/2} e^{-(x-L-{\rm v})},\alpha\} .
\end{eqnarray}
Hence one can check that $ {\underline \v}(1,x) -{\underline \v}_0(x-{\rm v}')\ge r\epsilon \sqrt{\goo \bar{q}}$ as long as 
\begin{equation}\label{cond17}
r\le \min\{ 6\gamma^{1/2}|\log\epsilon|^{-7/2},  \epsilon^{-1}|\log\epsilon|^{-3} \sqrt{\alpha( |\log\epsilon|^{-3})/2} \}.
\end{equation}
 Since $\goo \bar{q} \ge \goo \underline\v$ we are done.
\end{proof}

From the comparison theorem (Proposition \ref{comparison}), we can construct a probability space on which the solution 
of 
\begin{equation}
\label{rkpplower}
\left\{\begin{array}{ll}
\partial_t \underline{u} 
= \partial_x^2 \underline{u}  + {\underline f}(\underline{u} ) + \epsilon\sigma(\underline{u} )\dot{W} , &
           |x|<L +{\rm v} t\\
 \underline{u} (t,x) = 0,&  |x|\ge L+{\rm v} t.       \end{array}
\right.
\end{equation} gives an almost sure lower bound  for the solution 
of  (\ref{rkpp}) with the same initial data. 

Suppose we start (\ref{rkpplower}) with ${\underline \v}_0( x)$ from Lemma \ref{lemma3}.
The idea is that the solution $\underline{u} $ will stay close to ${\underline \v}$ up to time $1$. 
To see how close, let us make a very rough argument.  Since $\underline{f}$ has Lipschitz constant
$1$ one expects that for times of $\mathcal O(1)$,
$
|\underline{u}(t,x)-\underline{\v}(t,x)|$ is controlled by something like $\epsilon|Z_{L,{\rm v}}(t,x)|$ where\begin{equation}\label{azee}Z_{L,{\rm v}}(t,x)=
\int \int_0^t e^{t-s} G_{{\rm v},L} (s,y,t,x)
\sigma(u(s,y)) W(dsdy).
\end{equation}
The actual bound is somewhat more complicated, but it amounts to the same thing.
Recall that $\sigma^2(u)\le u$.  By concavity of $\underline f$, $E[u] \le \underline \v$.  So
\begin{equation}
E[Z_{L,{\rm v}}^2(1,x)]
\le e{\goo }
\underline\v(1,x). \end{equation}
Things are tight in the region close to the front $\{ x={\rm v} t+L\}$ where $\underline\v=\mathcal O(\varepsilon^2)$.  
Here ${\goo }
\underline\v(1,x)=\mathcal O(\varepsilon^2)$ as well.  Hence the fluctuations of $\underline{u}-\underline{\v}$ are of order $\epsilon\varepsilon=\gamma\epsilon^2$ there.  This is the reasoning behing (\ref{2.12}). The key point of the following refined large deviation estimate is that it shows that the fluctuations near the front are of $\mathcal O(\gamma\epsilon^2)$ instead of the $\mathcal O(\epsilon)$ one would obtain naively.

\begin{lemma} \label{lbd}  Let $\underline{\v}$, $L$ be as in Lemma \ref{lemma3}.  There exists $C_{(\ref{268})}<\infty$ such that for all $0< \p<\epsilon^{-1}$,
\begin{equation}\label{268}
P\Big( \exists x: \underline{u}(1,x) \le \underline\v(1,x) - \p\epsilon \sqrt{{\goo }
\underline\v(1,x)} \Big)\le 4L\exp\{ - C^{-1}_{(\ref{268})} \p^2\} . 
\end{equation}
\end{lemma}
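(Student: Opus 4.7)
The plan is to represent $D(t,x) := \underline{u}(t,x) - \underline\v(t,x)$ as a stochastic integral with predictable kernel, prove a Gaussian tail at each fixed $x$, and conclude by a union bound over $O(L)$ points.

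First I would linearize. Set $c(s,y) := [\underline{f}(\underline{u}(s,y))-\underline{f}(\underline\v(s,y))]/D(s,y)$ on $\{D\neq 0\}$ and $c:=0$ otherwise, so $|c|\le 1$ since the Lipschitz constant of $\underline{f}$ is $1-|\log\epsilon|^{-3}<1$. Then $D$ solves the linear SPDE
\begin{equation*}
\partial_t D = \partial_x^2 D + c D + \epsilon\sigma(\underline{u})\dot W
\end{equation*}
with Dirichlet condition on $|x|=L+{\rm v}t$ and $D(0,\cdot)\equiv 0$. Letting $G^c_{{\rm v},L}$ denote the corresponding Feynman--Kac Green's function, $|c|\le 1$ and the time horizon $1$ give $G^c_{{\rm v},L}\le e\cdot G_{{\rm v},L}$, and Duhamel's formula yields
\begin{equation*}
D(1,x_0) = \epsilon \int \int_0^1 G^c_{{\rm v},L}(s,y,1,x_0)\sigma(\underline{u}(s,y)) W(ds,dy).
\end{equation*}

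Second, I would establish a one-point estimate. For fixed $x_0$, the process $t\mapsto M_t(x_0) := \epsilon \int\int_0^t G^c_{{\rm v},L}(s,y,1,x_0)\sigma(\underline{u}(s,y)) W(ds,dy)$ is a continuous martingale with $M_1(x_0)=D(1,x_0)$ and quadratic variation pointwise bounded by $V(x_0) := e^2\epsilon^2 \int \int_0^1 G^2_{{\rm v},L}(s,y,1,x_0)\underline{u}(s,y) ds dy$, using $\sigma^2(u)\le u$. Concavity of $\underline{f}$ together with Jensen's inequality applied to the mild form of the SPDE for $\underline{u}$ gives $E[\underline{u}(s,y)] \le \underline\v(s,y)$, so $E[V(x_0)] \le e^2\epsilon^2 \goo\underline\v(1,x_0)$. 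The pointwise tail
\begin{equation*}
P\bigl(-D(1,x_0) \ge \p\epsilon\sqrt{\goo\underline\v(1,x_0)}\bigr) \le 2\exp(-C^{-1}\p^2)
\end{equation*}
then follows by combining the Bernstein inequality $P(-M_1\ge r, V(x_0)\le V^*)\le \exp(-r^2/(2V^*))$ with an inductive moment bound of the form $E[V(x_0)^k]\le k!\, C^k\,(e^2\epsilon^2\goo\underline\v(1,x_0))^k$. I would prove the latter by expanding $\underline{u} = \underline\v + D$ and applying the BDG inequality recursively to $D$.

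Finally I would convert the pointwise bound to a uniform one by a union bound over $O(L)$ points spaced at unit scale in $[-L-{\rm v},L+{\rm v}]$, using standard H\"older regularity (any exponent less than $1/2$ in $x$) for $\underline{u}$ and $\underline\v$ and Lipschitz continuity of $\sqrt{\goo\underline\v}$ on unit scale, absorbing the discretization error into $C_{(\ref{268})}$.

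The main obstacle is the recursive moment bound in Step 2: because we genuinely need the Gaussian tail $\exp(-C^{-1}\p^2)$ rather than a polynomial one, the factorial combinatorics must be preserved through the iteration. Schematically, $V(x_0) = e^2\epsilon^2(\goo\underline\v(1,x_0) + \int G^2 D)$, and the second summand must be controlled in $L^k$ by $k!\, C^k\, \epsilon^k (\goo\underline\v)^k$ using BDG applied to $D$, closing the induction without losing the required $k!$ factor.
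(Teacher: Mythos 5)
There are two genuine gaps here. First, the representation $D(1,x_0)=\epsilon\int\!\!\int_0^1 G^c_{{\rm v},L}(s,y,1,x_0)\sigma(\underline u(s,y))\,W(ds,dy)$ is not a legitimate Walsh integral: the Feynman--Kac kernel $G^c_{{\rm v},L}(s,y,1,x_0)$ depends on the random potential $c(r,\cdot)$ for all $r\in[s,1]$, hence is not $\mathcal F_s$-measurable, so your $M_t(x_0)$ is not a martingale and the quadratic-variation and Bernstein steps do not apply to it. The paper avoids this by a sub-/supersolution comparison (Lemma \ref{lemma12}), which gives $|\underline u-\underline\v|\le |Z|+|\tilde Z|$ where $Z$ is the stochastic convolution against the \emph{deterministic} kernel $G_{{\rm v},L}$ and $\tilde Z$ is a deterministic convolution of $|Z|$.

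Second, and more fundamentally, your Step 2 cannot deliver the Gaussian tail $e^{-C^{-1}\p^2}$ even if the factorial moment bound were proved. A bound $E[V(x_0)^k]\le k!\,C^kV_*^k$ with $V_*=e^2\epsilon^2\goo\underline\v(1,x_0)$ yields only an exponential tail $P(V>\lambda V_*)\le e^{-\lambda/(eC)}$; combining with Bernstein gives
\begin{equation*}
P\bigl(-M_1\ge \p\sqrt{V_*}\bigr)\le e^{-\lambda/(eC)}+e^{-\p^2/(2\lambda)},
\end{equation*}
and optimizing over $\lambda$ produces $e^{-c\p}$, not $e^{-c\p^2}$: a martingale whose bracket has only exponential tails does not have Gaussian tails. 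The idea you are missing is the paper's localization, which makes the bracket deterministic. Set $\hat\v=\underline\v+\p\epsilon\sqrt{\goo\underline\v}$ and replace $\sigma^2$ by $\min\{\sigma^2,\hat\v\}$; the modified solution agrees with $\underline u$ up to the first exit from the event $\mathcal A$ in (\ref{curlyB}), so $P(\mathcal A)$ is unchanged and one may assume $\sigma^2(t,x)\le\hat\v(t,x)$ pointwise and deterministically. The quadratic variation is then deterministically of order $\epsilon^2\goo\underline\v$, and the Gaussian tail --- uniformly in $x$ over each unit strip and summed over the $L$ strips, which the paper gets from the chaining Lemma \ref{ldlemma1} rather than from your union bound plus an unquantified H\"older estimate --- follows.
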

Note the  factor $L$ on the right hand side is because the
large deviations are done on space intervals of size $1$, and
then summed over the width of $\underline\v(1,x)$.  If we take \begin{equation}\label{cond18}\p^2 = C_{(\ref{268})}[|\log\epsilon|^3+2\log|\log\epsilon|+\log4],
\end{equation}
 the right hand
side is less than $|\log\epsilon|^{-1}e^{-|\log\epsilon|^{3}}$.
%
We conclude that if we start (\ref{rkpplower}) 
with  ${\underline \v}_0(x) $,
then
\begin{equation}
P( u(1,x) \ge{\underline \v}_0( x- {\rm v}  ),~~ x\in \mathbf  R ) \ge 1-|\log\epsilon|^{-1}e^{-|\log\epsilon|^{3}}.
\end{equation}
Now we can ask for this to happen $T=|\log\epsilon|$ times, to obtain
\begin{equation}
P( u(T,x) \ge {\underline \v}_0( x- {\rm v}   T), ~x\in \mathbf R )
\ge 1- e^{-|\log\epsilon|^{3}}\stackrel{\rm def}{=} p.
\end{equation}
By symmetry we also have
\begin{equation}
P( u(T,x) \ge {\underline \v}_0( x+ {\rm v}  T),~ x\in \mathbf R )
\ge p.
\end{equation}

This allows us to compare the system to a 2-dependent oriented percolation.  
Let $\mathcal L= \{(m,n)\in \mathbf Z^2: m+n$ is even, $m\ge 0\}$.  Let
$\hat {\mathcal L}$ denote the set of directed bonds $(m,n)\to (m+1,n+1)$ or $(m,n)\to(m+1, n-1)$.  Let $X_b$, $b\in \hat{\mathcal L}$ be random variables
taking values in $\{0,1\}$. Assume that for all $b\in   \hat{\mathcal L}$,
\begin{equation}
P( X_b=1)\ge p.
\end{equation}
Also assume that 
 $X_{b}$ and $X_{b'}$ are independent if the lattice distance between  $b$ to $b'$ is strictly larger than $2$.
 If $m_1<m_2$ then we say $(m_1,n_1)\to (m_2,n_2)$ if they are joined by a
 sequence of directed bonds $b_i\in\hat {\mathcal L}$ with $X_{b_i}=1$.
Let $\mathcal S$
denote the  subset of $(m,n)\in\mathcal L$ such that $(0,x)\to (m,n)$ for some $x<0$.

\begin{lemma} \label{oplem} Suppose that 
\begin{equation}\label{pie}
p> 1-e^{-10}.
\end{equation}
Then, with
probability $1$,  for all but finitely many $m$, \begin{equation}\label{badbd}
N_m \stackrel{\rm def}{=} \max\{n: (m,n)\in \mathcal S \}\ge (1- {10}{|\log(1-p)|^{-1}}) m
\end{equation} 
\end{lemma}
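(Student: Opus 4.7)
The plan is to establish an exponential estimate $P(N_m < (1-\eta)m) \le Ce^{-cm}$ for $\eta = 10/|\log(1-p)|$, from which the conclusion follows by Borel-Cantelli. The argument has two main steps.

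\textbf{Reduction to independent percolation.} Since $\{X_b\}$ is $2$-dependent with marginals bounded below by $p$, the Liggett-Schonmann-Stacey stochastic domination theorem produces an i.i.d.\ Bernoulli product field of parameter $\hat p = \hat p(p)$, with $\hat p \uparrow 1$ as $p \uparrow 1$, that is stochastically dominated by $\{X_b\}$. Since the reachable set $\mathcal S$ and hence $N_m$ are monotone increasing functions of the configuration, it suffices to prove the analogous lower bound for independent oriented bond percolation of parameter $\hat p$. The constant $10$ will change under this reduction, but the only feature used later in the paper is that $\eta(p) \to 0$ as $p \to 1$, with $\eta(p)$ of order $|\log(1-p)|^{-1}$ or smaller.

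\textbf{Peierls/contour estimate.} For i.i.d.\ oriented percolation on $\mathcal L$, the event $\{N_m < (1-\eta)m\}$ forces every directed path of length $m$ from the source ray $\{(0,x):x\le 0\}$ to the target strip $\{(m,n): n \ge (1-\eta)m\}$ to contain a closed bond. By planar duality for directed percolation, used for example in Durrett~\cite{bradurr88} and predecessors, this implies the existence of a connected dual path of closed edges of length at least $c\eta m$ separating source from target. The number of rooted dual paths of length $\ell$ is at most $C_0^\ell$ for a lattice constant $C_0$, and each is fully closed with probability $(1-\hat p)^\ell$ by independence. A union bound over $\ell \ge c\eta m$ gives
\begin{equation*}
P\bigl(N_m < (1-\eta)m\bigr) \le \sum_{\ell \ge c\eta m} C_0^\ell (1-\hat p)^\ell \le \exp\bigl\{-c\eta m\,(|\log(1-\hat p)| - \log C_0)\bigr\},
\end{equation*}
which is summable in $m$ once $p$ (and hence $\hat p$) is close enough to $1$; the hypothesis $p > 1-e^{-10}$ ensures this.

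The main obstacle is making the contour step rigorous despite the source being the \emph{infinite} negative ray, so that a naive union bound over starting points would appear to blow up. I would handle this by localizing the contour to a window of polynomial size around the target point $(m,\lfloor(1-\eta)m\rfloor)$, using that a blocking contour of length $\ell$ cannot reach sources more than $O(\ell)$ away from the target. This introduces only a polynomial-in-$m$ prefactor in the union bound, which is easily absorbed by the exponential decay produced above, and then Borel-Cantelli closes the argument.
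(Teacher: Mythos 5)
Your proposal is correct in substance and reaches the result, but it routes the handling of the $2$-dependence differently from the paper. The paper does not pass to an i.i.d.\ field at all: it quotes the contour bound of Conlon--Doering (Lemma 3.6 of \cite{cd04}) directly for the dependent field, in the form
\begin{equation*}
P\bigl(N_m< m(1-\delta)\bigr)\le \sum_{n\ge m\delta}4^{2n+m}(1-p)^{n/2},
\end{equation*}
where the exponent $n/2$ (rather than $n$) comes from extracting, from the $\sim n$ closed edges forced on a blocking contour, a subfamily of at least $n/2$ edges that are pairwise at lattice distance greater than $2$ and hence jointly independent; taking $\delta=10|\log(1-p)|^{-1}$ and using $p>1-e^{-10}$ makes the sum $\le 2e^{-m}$, and Borel--Cantelli finishes. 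Your Liggett--Schonmann--Stacey reduction is a heavier but legitimate alternative; what it buys is that the subsequent Peierls estimate is for genuinely independent percolation, but what it costs is the constant: under LSS one has $1-\hat p\asymp(1-p)^{1/D}$, so $|\log(1-\hat p)|$ is only a constant fraction of $|\log(1-p)|$, and you prove the lemma with $10$ replaced by some larger $C$. That weaker form is still sufficient downstream (the final lower bound only needs an error of order $|\log\epsilon|^{-3}$, which is dominated by the $\log|\log\epsilon|\cdot|\log\epsilon|^{-3}$ terms in Theorem \ref{t1}), but it does not literally establish the stated inequality. Two smaller points: the difficulty you flag with the infinite source ray does not really arise in the standard argument, because the blocking contour is anchored at the rightmost source site $(0,0)$, so there is no union over starting points; and your phrase ``each is fully closed'' overstates the dual-path event --- only the edges crossed in one orientation (a definite positive fraction of the contour, which is where the $n$ versus $2n+m$ discrepancy in the paper's bound comes from) need be closed, which is why the exponent of $(1-p)$ is a fraction of the contour length.
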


\begin{proof} Using the standard contour counting argument one obtains
\begin{eqnarray} \label{cdbd} &
P( N_m < m (1-\delta)) \le \sum_{n\ge m\delta} 4^{2n+m} (1-p)^{n/2}.&
\end{eqnarray} 
(see \cite{cd04}, Lemma 3.6, for a complete proof of (\ref{cdbd}).  The only difference is the exponent $n/2$ on
the $(1-p)$ which comes from the 2-dependence.)  Take $\delta = {10}{|\log(1-p)|^{-1}}$.  If we assume (\ref{pie}) then it is not hard to
check that the right hand side is less than $2e^{-M}$ and the result follows from Borel-Cantelli.
\end{proof}

{\it Remark.}  (\ref{badbd}) is very far from optimal:  It is known  (see \cite{D}, \cite{GP}) that for $p$ close to
$1$, the speed of oriented percolation, 
$\lim_{m\to \infty} \frac{N_m}{m}= 1- {\mathcal O}(1-p)$.  If one uses the stronger result, one can
check for the case (\ref{MScase1}), (\ref{MScase2}), the main result holds with $\epsilon_0= e^{-11}$.

\medskip

{\it  Proof of the lower bound.}  Start (\ref{rkpp}) with initial data $u_0$ 
in $\mathcal{C}_{\rm exp}$ satisfying (\ref{16}). Without loss of generality, $x_0=0$.  As long as $\theta>\alpha$ we have
$u_0(x)\ge \sum_{n=-\infty}^0 {\underline\v}_0(x-nT)$.  We say 
$X_{(m,n)\to(m+1, n\pm 1)}=1$
if $u(mT,x+nT), u((m+1)T, x+(n\pm 1)T)\ge {\underline\v}_0(x)$. 
Recall $r(t)=\sup\{ x\in \mathbf R: u(t,x) >0\}$.  If $N_m\ge am$,
then $r(mT)\ge amT$, and furthermore, 
$r(t)\ge (am-1)t - L$, $(m-1)T\le t\le mT$.  Hence 
from Lemma \ref{oplem} we get a lower bound 
\begin{equation}
\underline{\rm v}_\epsilon\ge {\rm v}(\varepsilon^2) - 10|\log(1-p)|^{-1}= {\rm v}(\varepsilon^2) - 10|\log\epsilon|^{-3}.
\end{equation}

\section{Comparison}\label{comparison theorem}
\setcounter{equation}{0}

We now state precisely the comparison theorem we are using. Let $U$ denote  the set
\begin{equation}
\label{Gupper}
\dirset= \{ (t,x) \in [0,T]\times \mathbf R~:~ x\le {\rm v} t \}
\end{equation}
 for some ${\rm v}>0$ \begin{prop}  \label{comparison}  Assume   (\ref{wu1}), (\ref{wu2}). 

1. Suppose that $g(t, u)\le f(u)$ are Lipschitz functions, $u\in \mathbf R $, $t\ge 0$ 
   and initial data
 $v_0(x)\le u_0(x)$, $x\in\mathbf R$ are given.
 There exists a probability space $(\Omega,\mathcal F, P)$ on which there are
 white noises $\dot{W}, \dot{W_1}$, 
a solution  $u$  to  (\ref{rkpp}); a solution  $v$  to\begin{equation}
\label{drkpp2}\left\{\begin{array}{ll}
\partial_t v 
= \partial_x^2 v + g(v) + \epsilon\sigma(v)\dot{W_1}, & (t,x)\in \dirset, \\
v(t,x) = 0,& (t,x)\not\in \dirset, \end{array}\right.
\end{equation}  with $v(0,x) = v_0(x)$, and satisfying
 \begin{equation}
u(t,x)\ge v(t,x),\qquad x\in \mathbf R, \quad t>0.
\end{equation}
2.  Fix possibly random ${\cal F}_0$-measurable $u_0\in  \statespace$.
 Suppose that $g(u)$ is  the Lipschitz function and there is also another 
    initial data $\bar u_0=v_0\in \mathcal{C}_{\rm exp}$ such that 
\[ u_0(\cdot)\leq \bar u_0(\cdot),\;\;{\rm a.s.,}\]
and $ \bar u_0(x)=0$ for all $x\geq 0.$
 Then there exists a probability space $(\Omega,\mathcal F, P)$ on which white noises 
 $\dot{W}, \dot{\bar W}, \dot{W_1}, \dot{W_2}$ and a vector of processes 
 $(u,\bar u, v, \bar w)$ are defined and satisfy the following properties: 

(i)
 $u$  is a solution  to  (\ref{rkpp}); 

(ii)
$\bar u$  is a solution  to  (\ref{rkpp}) in ${\cal C}_{\rm exp}$ starting at $\bar u_0$ with $\bar W$ replacing $W$; 

(iii)
$v$  is a solution  to  (\ref{drkpp2}) in ${\cal C}_{\rm exp}$ starting at $v_0=\bar u_0$  where $U$ is of the form 
 (\ref{Gupper});

(iv)  $\dot W_2$ is a space-time white noise independent of $\dot W_1$;

(v)
Up to time \begin{equation}
\tau= \inf\{ t\ge 0 : \exists x,~ \sigma_w^2(t,x)=:\sigma^2(\bar u)-\sigma^2(v)=0, ~ \bar u(t,x)\neq v(t,x)\}, 
\end{equation}\begin{equation}
\bar u-v\le \bar w \qquad {\rm and} \qquad 
 u\le \bar u\qquad {\rm a.s.},
\end{equation}
where
\begin{equation}
\label{e30}
\partial_t \bar w = \partial_x^2 \bar w  +\bar w
      + \epsilon\sigma_w\dot{W}_2+ \delta_{x-\k t}\dot A,
\end{equation} 
and
\begin{equation}\nonumber
A(t)  = -\int (v(t,x)-v(0,x)) dx  +\int\!\int_0^t g(v(s,x)) ds dx
+\epsilon\int\!\int_0^t \sigma(v(s,x)) W_1(ds dx).
\end{equation}


\end{prop}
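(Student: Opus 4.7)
The strategy is to regularize the diffusion coefficient $\sigma$ by Lipschitz approximations and pass to a weak limit, using the standard SPDE comparison theorems (\cite{Pa}) in the Lipschitz case together with the assumed weak uniqueness (\ref{wu2}) in $\statespace$ to identify the limiting laws. Pick Lipschitz $\sigma^{(n)}$ with $\sigma^{(n)}(0)=0$, $(\sigma^{(n)})^2(u)\le u$ and $\sigma^{(n)}\to\sigma$ uniformly on compact sets. For each $n$, pathwise uniqueness and the SPDE comparison principle are available.

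For Part 1, on a single probability space carrying a white noise $\dot W$ we solve the regularized (\ref{rkpp}) to produce $u^{(n)}$, and solve (\ref{drkpp2}) on $\dirset$ with drift $g$ and the same noise $\dot W_1:=\dot W$ to produce $v^{(n)}$. The Dirichlet condition can be realized in mild form through the absorbing Green kernel of (\ref{e26}), or by adding a penalty $-N v\mathbf{1}_{\dirset^c}$ and letting $N\to\infty$. Since $g\le f$, $v_0\le u_0$, and both equations are driven by the same noise with the same (Lipschitz) $\sigma^{(n)}$, the comparison principle gives $v^{(n)}\le u^{(n)}$ almost surely. Uniform moment bounds yield tightness; any subsequential limit $(u,v)$ satisfies (\ref{int-eq:u}) and (\ref{e26}) driven by a common white noise, the inequality survives, and weak uniqueness in $\statespace$ forces the law of $u$ to match the prescribed one.

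For Part 2 the decomposition $\bar u=v+w$ must be built with compatible noises. On a probability space with two independent space-time white noises $\dot W_1,\dot W_2$, solve the regularized (\ref{drkpp2}) for $v^{(n)}$ with initial data $\bar u_0$ and noise $\dot W_1$, and extract the boundary flux $A^{(n)}(t)$ from the mass balance (\ref{equ:v1}). Next solve
\[
\partial_t w = \partial_x^2 w + \big[f(v^{(n)}+w) - g(v^{(n)})\big] + \epsilon\,\tilde\sigma^{(n)}(v^{(n)},w)\dot W_2 + \delta_{x-{\rm v}t}\dot A^{(n)},
\]
with $w(0,\cdot)\equiv 0$ and $\tilde\sigma^{(n)}(v,w):=\sqrt{(\sigma^{(n)})^2(v+w)-(\sigma^{(n)})^2(v)}$, up to the stopping time $\tau^{(n)}$ at which the radicand first vanishes where $w\ne 0$. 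Because $\dot A^{(n)}\ge 0$, Lipschitz comparison yields $w^{(n)}\ge 0$, so $\tilde\sigma^{(n)}$ makes sense on $[0,\tau^{(n)})$. Set $\bar u^{(n)}:=v^{(n)}+w^{(n)}$; adding the two SPDEs and using the orthogonality identity $\sigma_1\dot W_1+\sigma_2\dot W_2=\sqrt{\sigma_1^2+\sigma_2^2}\,\dot{\bar W}$ for independent white noises exhibits a white noise $\dot{\bar W}^{(n)}$ driving $\bar u^{(n)}$ through (\ref{rkpp}) with coefficient $\sigma^{(n)}(\bar u^{(n)})$. Define $\bar w^{(n)}$ by (\ref{e30}) (with $\tilde\sigma^{(n)}$ regarded as a function of the variable $\bar w^{(n)}$) driven by the same $\dot W_2$ and with the same $A^{(n)}$; the Lipschitz upper bound on the drift of $w^{(n)}$ together with the comparison principle gives $w^{(n)}\le\bar w^{(n)}$, i.e.\ $\bar u^{(n)}-v^{(n)}\le\bar w^{(n)}$. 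Finally, take $\dot W:=\dot{\bar W}^{(n)}$ and apply Part 1 with $u_0\le\bar u_0$ to obtain $u^{(n)}$ with $u^{(n)}\le\bar u^{(n)}$.

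The final step passes $n\to\infty$ in the joint law of $(u^{(n)},\bar u^{(n)},v^{(n)},w^{(n)},\bar w^{(n)})$ and the accompanying noises. Tightness of the SPDE components follows from uniform $L^p$ moment and H\"older estimates (cf.\ \cite{wal86}), and tightness of the noises is immediate. The main obstacle is to identify limit points: one must verify that the mild-form martingale problems and the orthogonal decomposition $(\sigma^{(n)})^2(\bar u^{(n)}) = (\sigma^{(n)})^2(v^{(n)}) + (\tilde\sigma^{(n)})^2$ survive the limit, which is handled by Skorohod representation together with the uniform convergence of $\sigma^{(n)}$. One must also show $\tau^{(n)}\to\tau$ in law along subsequences; here the strict monotonicity (\ref{sigmacond}) of $\sigma^2$ on $[0,u^*]$ rules out degenerate coincidences and makes $\tau$ a lower semi-continuous functional of the fields. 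The inequalities $\bar u^{(n)}-v^{(n)}\le\bar w^{(n)}$ and $u^{(n)}\le\bar u^{(n)}$, valid on $[0,\tau^{(n)})$, pass to $[0,\tau)$ in the limit, and weak uniqueness in $\statespace$ pins down the laws of $u$ and $\bar u$.
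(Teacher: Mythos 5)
Your overall strategy is the same as the paper's: Lipschitz regularizations $\sigma^{(n)}$, the elementary comparison principle at the approximate level, tightness and weak limits, and weak uniqueness in $\statespace$ to identify the limiting laws; Part~1 of your argument is essentially identical to the paper's. For Part~2 the paper's execution differs from yours in two places, and in both the paper's choice is the cleaner one. First, the paper builds the triple $(u,v,\tilde w)$ by taking the coefficient of $\dot W_2$ to be $\tilde\sigma_w^2=|\sigma^2(v+\tilde w)-\sigma^2(v)|$ \emph{with an absolute value}, so the equation for $\tilde w$ is globally defined and the weak limit in $n$ involves no stopping times at all; $\tau$ is introduced only afterwards, pathwise on the limiting space, as the first time the absolute value becomes active. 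Your version, which stops each approximation at $\tau^{(n)}$ and then tries to send $\tau^{(n)}\to\tau$ through the weak limit, runs into the well-known instability of hitting times under weak convergence, and the appeal to (\ref{sigmacond}) to make $\tau$ lower semi-continuous is the weakest link in your argument; the absolute-value device removes the problem entirely. Second, to produce $\bar w$ the paper does not solve (\ref{e30}) as a separate SPDE with $\sigma_w$ a function of $\bar w$: it sets $\bar w=w+\tilde w$ where $w\ge 0$ solves the \emph{noiseless} linear equation $\partial_t w=\partial_x^2 w+(w+\tilde w)-(f(\tilde w+v)-f(v))$ with non-negative drift. This makes $\bar w$ satisfy (\ref{e30}) with exactly the coefficient $\sigma_w^2=\sigma^2(\bar u)-\sigma^2(v)$ demanded by the statement (a function of the already-constructed $\bar u$ and $v$), whereas your $\bar w^{(n)}$ solves a different equation, with $\sigma_w^2=\sigma^2(v+\bar w)-\sigma^2(v)$, and requires an additional SPDE comparison between $w^{(n)}$ and $\bar w^{(n)}$ that the additive construction avoids. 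Neither discrepancy is fatal, but you should adopt the absolute-value formulation and the additive construction of $\bar w$ to match the statement as written and to keep the limit passage elementary.
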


 \begin{proof} 1.  Assume first that  $\sigma$  is  globally Lipschitz.
Then the proof goes essentially along the lines of Theorem 3.1 of \cite{Mueller-Perkins}. One approximates the solutions by lattice versions as in (\ref{lattice}), for which the ordering is elementary.  Then one shows the
ordering is
preserved in the limit.  Because one has strong uniqueness it means the
solution of the SPDE's are ordered in the desired way.
Now suppose we do not have the strong uniqueness. We construct a sequence of Lipshitz $\sigma^{(n)}$ converging unifomly 
to $\sigma$ 
and consider 
the sequence of solutions $u^{(n)}, v^{(n)}$ corresponding to $\sigma^{(n)}$. It is a standard to
 check that the sequence is tight and any weak limit point satisfies our equations. Since comparison is 
 satisfied 
for each $n$ it also holds in the limit. 

2.  There exists a probability space with a noise $\dot W$ and a pair of independent noises $\dot W_1$ and $\dot W_2$ such that $u$ solves~(\ref{rkpp}), $v$ solves (\ref{drkpp2}) and $\tilde w$ solves 
\begin{eqnarray}\label{3.8aa}
\partial_t \tilde{w}=\partial^2_x\tilde{w} + f(\tilde{w}+v)-f(v) + \tilde\sigma_w \dot{W}_2 + \delta_{x-\kappa t}\dot{A},
\end{eqnarray}  
where $\tilde\sigma^2_w=|\sigma^2(v+\tilde{w})-\sigma^{2}(v)|$, $\tilde w$ is non-negative and 
\begin{eqnarray}
\label{uvw_order}
 u\leq v+\tilde w.
\end{eqnarray}
  The construction of such a tripple  $(u, v, \tilde w) $ is fairly
straightforward.  
One constructs a sequence of approximations to~(\ref{rkpp}), (\ref{drkpp2}) and (\ref{3.8aa}) for which the 
ordering correspondent to~(\ref{uvw_order}) is elementary. Then one takes a limit to get solutions to 
  (\ref{rkpp}), (\ref{drkpp2}) and (\ref{3.8aa}) and shows that ordering is preserved in the limit. By this 
way one gets that the the unique weak solution $u$ to~(\ref{rkpp}) is bounded from the above by $v+\tilde w$
 where $v, \tilde w$ are {\it some}
 solutions to (\ref{drkpp2}) and (\ref{3.8aa}) respectively with independent 
 white noises $\dot W_1, \dot W_2$.

Define
\begin{eqnarray}
\tilde u &=& v+\tilde w,\;\;t\leq \tau_1\\
\partial_t \tilde{u} 
&=& \partial_x^2 \tilde{u} + f(\tilde{u}) + \epsilon\sigma(\tilde u)\dot{W},\;t\geq \tau_1,
\end{eqnarray} 
where $\tau_1$ is defined similarly to $\tau$:
\[ \tau_1= \inf\{ t\ge 0 : \exists x,~ \tilde\sigma_w^2(t,x)=0, ~ (v+\tilde w)(t,x)\neq v(t,x)\}.\]
It is easy to see that $\tilde u$ is a solution to the equation which $\bar u$ is supposed to solve, 
and hence we can set $\bar u = \tilde u$ and $\tau_1=\tau$.
To show that $\bar u$ and $v$ indeed belong to ${\cal C}_{\rm exp}$ one can use for example the methods of proof of Theorem~1.2 from~\cite{mps06}. 
Now let us construct $\bar w$ satisfying~(\ref{e30}) such that 
\[ \tilde w\leq \bar w,\;\;{\rm on}\; t\leq \tau.\]
Let $w$ be a solution to 
\begin{eqnarray}
\partial_t w=\partial^2_x w + (w +\tilde w) - (f(\tilde{w}+v)-f(v)),\;\;t\leq \tau.
\end{eqnarray} 
As the drift term is non-negative we get that $w$ is non-negative. Now define $\bar w=w +\tilde w$
 and it is 
easy to check that it satisfies~(\ref{e30}) and we are done. \end{proof}

   \section{Large deviations}
\setcounter{equation}{0}
\label{ldevs}

We now present a fairly standard type of large deviation result which covers the estimates
we need both in the upper and lower bounds.  We need some notation.  Let  $g(s,y,t,x)$ and $\eta(x,y)$ be deterministic, and
\begin{equation}\label{4p1}
\Gamma_{b,T}=\{ (t,x)  :  t\in [0,T], x-{\rm v} t\in [b-1,b] \}.
\end{equation}
For $(t,x)$ and $(t',x') $ in $\Gamma_{b,T}$ let
\begin{equation}
d((t,x),(t',x')) = |x'-x| + |t'-t|^{1/2}.
\end{equation}
Define
\begin{equation}\label{43a}
{\mathcal B}(g,\eta,b) =\sup_{(t,x),(t',x')\in\Gamma_{b,T}\atop
d((t,x),(t',x') ) \le 1}\frac{\int\int_0^\infty  [g(s,y,t', x')-g(s,y,t,x)]^2 \eta(s,y) dsdy}{d((t,x),(t',x'))}.
\end{equation}

\begin{lemma}\label{ldlemma1} Let  $g(s,y,t,x)$, $\eta(s,y)$, $\Gamma_{b,T}$ and ${\mathcal B}(g,\eta,b)$ be as above and $\sigma(t,x)$  nonanticipating with  
\begin{equation}
|\sigma(t,x)|^2\le
\eta(t,x), \qquad (t,x)\in [0,T]\times \bf R
\end{equation} almost surely, and define
\begin{equation}\label{zeddd}Z(t,x)=\int\int_0^t  g(s,y,t, x) \sigma(s,y) W(dsdy).
\end{equation}
There exist $C_{(\ref{lambdacond})},C_{(\ref{conc})}<\infty$ such  that if $T\ge 1$   and
 \begin{equation}\label{lambdacond}
\rambda \ge  C_{(\ref{lambdacond})}T{\mathcal B}^{1/2}
 \end{equation}
 Then, with $\Phi(d) = \sqrt{d(1+|\log_2 d|) }$,
\begin{equation}\label{conc}
P\left( \sup_{(t,x), (t',x')\in \Gamma_{b,T}} \frac{| Z(t',x')-Z(t,x)|}
{\Phi(d((t,x),(t',x'))) }  \ge \rambda\right) 
\le 4T\exp \left\{ -  C^{-1}_{(\ref{conc})}\rambda^2 T^{-2}{\mathcal B}^{-1}\right\}.
\end{equation}\end{lemma}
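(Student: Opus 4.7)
The plan is to obtain a sub-Gaussian tail bound for each fixed increment $Z(t',x')-Z(t,x)$ via the exponential martingale inequality, and then to apply a dyadic chaining argument in the parabolic metric $d$ on the strip $\Gamma_{b,T}$ to upgrade this to the uniform modulus of continuity in (\ref{conc}).

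First, extending $g(s,y,t,x)=0$ for $s>t$, I would write the increment as the single stochastic integral
\begin{equation*}
Z(t',x')-Z(t,x) = \int\!\!\int_0^\infty [g(s,y,t',x')-g(s,y,t,x)]\, \sigma(s,y)\, W(ds\,dy).
\end{equation*}
Its predictable quadratic variation is bounded, using $|\sigma|^2\le\eta$ and the definition (\ref{43a}) of $\mathcal{B}$, by $\mathcal{B}(g,\eta,b)\cdot d((t,x),(t',x'))$ whenever $d\le 1$. Dubins--Schwarz together with the Gaussian tail bound (equivalently, Bernstein's inequality for continuous martingales) then gives, for each fixed pair in $\Gamma_{b,T}$ at distance $\le 1$,
\begin{equation*}
P(|Z(t',x')-Z(t,x)|\ge\mu) \le 2\exp\bigl\{-\mu^2/(2\mathcal{B}\, d((t,x),(t',x')))\bigr\}.
\end{equation*}

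For the uniform estimate I would partition $\Gamma_{b,T}$ into $T$ unit time slabs $\Gamma^{(n)}_{b,T} = \Gamma_{b,T}\cap \{t\in[n,n+1]\}$ and, within each slab, build nested $2^{-k}$-nets $\mathcal{N}_k^{(n)}$ in the metric $d$. Since balls of radius $r$ in the parabolic metric have volume of order $r^3$, one has $|\mathcal{N}_k^{(n)}|\le C\, 2^{3k}$. For each pair $(t,x),(t',x')$ in the same slab at distance $d\le 1$, telescope the increment through successive projections onto these nets and apply the fixed-pair estimate at each scale $k\ge k_0 = \lceil\log_2 d^{-1}\rceil$. A union bound over the $\lesssim 2^{3k}$ net-pairs at scale $k$, with allowable scale-$k$ increment proportional to $\sqrt{\mathcal{B}\cdot 2^{-k}(k+\log T)}$, combined with a geometric summation in $k$, produces exactly the modulus $\Phi(d)=\sqrt{d(1+|\log_2 d|)}$ with failure probability of the form $\exp\{-c\rambda^2 T^{-2}\mathcal{B}^{-1}\}$. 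A further union bound over the $T$ slabs yields the prefactor $4T$ in (\ref{conc}); coarse chains of length $\lesssim T$ coupling endpoints in different slabs account for the extra factor $T^{-2}$ inside the exponent and force the lower bound (\ref{lambdacond}) on $\rambda$ which ensures these cross-slab terms are controlled.

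The main technical point to watch is obtaining the sharp logarithmic factor $1+|\log_2 d|$ inside $\Phi(d)$, rather than the cruder $\log(T/d)$ that would arise from a naive Kolmogorov-type argument applied globally to all of $\Gamma_{b,T}$ at once. Sharpness follows by confining the dyadic chain inside a single unit slab, where the geometric series closes on its own, and paying the $\log T$ only once via the outer union bound over slabs; this separation is exactly what produces the factor $T$ (resp.\ $T^{-2}$) outside (resp.\ inside) the exponential in (\ref{conc}). Aside from keeping this bookkeeping straight, the remaining steps are a routine application of standard chaining for stochastic integrals against white noise.
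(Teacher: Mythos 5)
Your proposal is correct and follows essentially the same route as the paper: a sub-Gaussian bound for each fixed increment via the exponential (Itô/Bernstein) inequality with quadratic variation controlled by $\mathcal{B}\, d$, followed by dyadic chaining on a parabolic lattice whose scale-$n$ net has $\mathcal{O}(T2^{3n})$ points, with the weights $\sqrt{2^{-n}(n+1)}$ producing the modulus $\Phi$ and the fact that a chain may require $\mathcal{O}(T)$ edges per scale producing the $T^{-2}$ in the exponent together with the threshold condition on $\rambda$. The paper runs the chain on a single affine lattice adapted to the moving strip rather than slab-by-slab, but this is only a difference of bookkeeping, not of substance.
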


\begin{proof}  
Let $\mathcal{G}_n$ be the vertices  of an affine lattice with edges $\mathcal{E}_n$ parallel to the boundaries of $\Gamma_{b,T}$
and with edge lengths $2^{-n}$ in the $(1,0)$ direction and with 
vertical component $2^{-2n}$ in the $(1,{\rm v})$ direction. Let $\mathcal{G}=\cup_{n=0}^\infty\mathcal{G}_n$ and 
$\mathcal{E}=\cup_{n=0}^\infty\mathcal{E}_n$.  

Given 
$(t,x), (t', x')\in\mathcal{G}$ with \begin{equation}
\min(d((t,x),(t',x')),1) \in (2^{-n_0}, 2^{-(n_0-1)}]
\end{equation} there exists a path between them using edges from $\mathcal E$,
  which uses only edges from $\mathcal{E}_n$ with $n\ge n_0$,
and uses at most $T$ edges from any given $\mathcal{E}_n$.

For $e=(p,q)\in\mathcal{E}_n$,  write $Z_e:= Z(p)-Z (q)$ and $d_e=d(p,q)$. 
By standard It\^o calculus, 
\begin{equation}
E[ \exp\{ \gamma Z_{e}\}] \le \exp\{\frac12 \gamma^2  d_e{\mathcal B}\}.
\end{equation} Let $a_n=(10\sqrt{2}T)^{-1} (n+1)^{1/2}2^{-n/2}$ 
and \begin{equation}
\mathcal{A}_e = \{Z_e \leq a_n\rambda\}.
\end{equation}
By Chebyshev's inequality
\begin{equation}
P(\mathcal{A}_e^c)
\le \exp\{ \frac12\gamma^2  d_e {\mathcal B}- 
\gamma a_n\rambda \} .\end{equation}
Optimising the inequality over $\gamma$ gives\begin{equation}
\label{412}P(\mathcal{A}_e^c)
\le \exp\{- \frac12\rambda^2 a_n^2 d_e^{-1}{\mathcal B}^{-1} \} .\end{equation}
Let $
\mathcal{A}=\bigcap_{n=0}^\infty\bigcap_{e\in\mathcal{E}_n}\mathcal{A}_e$.
On $\mathcal{A}$,
\begin{eqnarray}
|Z(t',x')-Z(t,x)|  & \leq  &T
\rambda\sum_{n=n_0}^\infty a_n
\\
& \le & 
\rambda (n_0+1)^{1/2}2^{-n_0/2}\\
&\le & \rambda  \Phi(d((t,x),(t',x')))  .
\end{eqnarray}
Here we use the fact that for $n_0\ge 0$,
$\sum_{n=n_0+1}^\infty n^{1/2} 2^{-n/2}
 \le 10(n_0+1)^{1/2} 2^{-n_0/2}$.
Now we have
\begin{equation}
P(\mathcal{A}^c)\leq\sum_{n=0}^{\infty}\sum_{e\in\mathcal{E}_n}
P(\mathcal{A}_e^c)\le\sum_{n=0}^{\infty} |{\mathcal E}_n| P(\mathcal{A}_e^c).\end{equation}
 It is simple to check
that  $|{\mathcal E}_n| \le 2 T2^{3n}$ and $d_e\le  2^{-n+2}$.   From (\ref{412}) then,
\begin{equation}P(\mathcal{A}^c)\leq 2Te^{-2^{-10}\rambda^2T^{-2} {\mathcal B}^{-1}} (1- e^{3\log 2-2^{-10} \rambda^2T^{-2} {\mathcal B}^{-1}})\end{equation}
which gives (\ref{conc}) as long as (\ref{lambdacond}) holds.
Since $Z(s,y)$ is continuous, it is enough to check the bound on dyadics, and hence this completes the proof.
\end{proof}

In order to apply Lemma we need a bound on (\ref{43a}).
This is provided by the next lemma.  The lemma will only be applied with the $\lambda$ defined in
(\ref{214}), but it is true for other $\lambda$ satisfying (\ref{ldcond56}).

\begin{lemma}
\label{variance-lemma}
 Let \begin{equation}\label{418}g(s,y,t,x) = e^{a(t-s)} 1(0\le s\le t) G_{\rm v}(s,y,t,x)\end{equation}
 and \begin{equation}\eta(s,y)\le (1+ |y-{\rm v} s|) \exp\{ \lambda |y-{\rm v} s|\}.
 \end{equation}
 Then, there exists $C_{(\ref{lbA})}<\infty$ such that  for any $b<0$ (i) if $T\le 1$ and $a\le 1$, or (ii) if 
$T>1$ and 
\begin{equation}\label{ldcond56}
{\rm v}\lambda - \frac{\lambda^2}{2}\ge 2a , 
\end{equation}
 ${\mathcal B} $ from (\ref{43a}) satisfies  
\begin{equation} 
\label{lbA} {\mathcal B}\le C_{(\ref{lbA})} (1+|b|) e^{\lambda |b|} .
\end{equation}
The same also holds in case (i) if $G_{\rm v}$ is replaced by $G_{{\rm v}, L}$.
\end{lemma}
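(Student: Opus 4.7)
The plan is to estimate the numerator of (\ref{43a}) by splitting each increment $(t,x) \to (t',x')$ into its spatial part $(t,x) \to (t,x')$ and temporal part $(t,x') \to (t',x')$, and bounding each by explicit Gaussian computations in the moving frame $\tilde y := y - {\rm v} s$, $\tilde x := x - {\rm v} t$. The first observation is that $G_{\rm v}(s,y,t,x) = 0$ whenever $y \ge {\rm v} s$, so the integrand is supported on $\tilde y \le 0$, on which $|y-{\rm v}s| = -\tilde y$ and the weight simplifies to $\eta(s,y) \le (1+|\tilde y|)\,e^{-\lambda \tilde y}$. Via the dominations $G_{\rm v} \le G$ and $G_{{\rm v},L} \le G$, it suffices to prove the bound with the free heat kernel $G$.

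For the spatial increment I would invoke the Plancherel identity
\[
\int_0^t\!\!\int [G(t-s, x-y) - G(t-s, x'-y)]^2 \,dy\,ds = \tfrac14 |x-x'|,
\]
together with the Gaussian shift identity $e^{\mu z} G(u,z) = e^{\mu^2 u}\, G(u, z - 2\mu u)$ applied with $\mu = -\lambda/2$ to absorb the weight $e^{-\lambda \tilde y}$ into a shifted Gaussian of the same form. The net effect is a multiplicative factor $e^{-\lambda \tilde x}\,e^{(-\lambda {\rm v} + \lambda^2/2)(t-s)}$; combined with the $e^{2a(t-s)}$ factor from $g^2$, condition (\ref{ldcond56}) forces the exponent in $t-s$ to be nonpositive, so the time integral is bounded uniformly in $T$ and the constant prefactor is at most $e^{\lambda(|b|+1)}$. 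The temporal increment is handled analogously by writing, via Chapman--Kolmogorov,
\[
G(t'-s, x-y) - G(t-s, x-y) = \int_0^{t'-t} \partial_\tau G(t-s+\tau, x-y)\,d\tau,
\]
squaring, applying Cauchy--Schwarz, and again using Plancherel together with the shift identity to extract the $|t'-t|^{1/2}$ scaling.

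The polynomial factor $(1+|\tilde y|)$ is absorbed via $|\tilde y| \le |\tilde x| + |\tilde y - \tilde x|$: the first piece gives the $(1+|b|)$ prefactor, while the second, integrated against a Gaussian of variance of order $t-s$, contributes at most a power of $\sqrt{t-s}$ which is harmless thanks to the exponential decay from (\ref{ldcond56}). Case (i), with $T \le 1$ and $a \le 1$, is immediate: $e^{2a(t-s)}$ is bounded by a constant, no condition relating $\lambda, {\rm v}, a$ is needed, and the same argument with $G_{{\rm v},L}$ applies verbatim.

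The main obstacle will be the bookkeeping in the temporal increment, where one must combine the time derivative of the heat kernel, the exponential weight, and the polynomial factor simultaneously, while preserving the sharp $|t'-t|^{1/2}$ rate and keeping the multiplicative constant no worse than $C(1+|b|)\,e^{\lambda|b|}$ uniformly in $T$. A secondary subtlety is the reflected-image piece in the Girsanov representation of $G_{\rm v}$ under the moving frame; fortunately this piece is nonnegative and has the same Gaussian form as the free kernel, so dominating it by $G$ suffices for the stated bound.
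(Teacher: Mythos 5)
There is a genuine gap, and it occurs at the very first reduction. You claim that ``via the dominations $G_{\rm v}\le G$ and $G_{{\rm v},L}\le G$, it suffices to prove the bound with the free heat kernel $G$.'' But the quantity $\mathcal B$ in (\ref{43a}) is a supremum of integrals of \emph{squared differences} $[g(s,y,t',x')-g(s,y,t,x)]^2$, and pointwise domination of a kernel does not control differences of that kernel: if $G_{\rm v}(\cdots')$ and $G_{\rm v}(\cdots)$ are, say, $0.1$ and $0.9$ while $G(\cdots')=G(\cdots)=1$, the left-hand difference is large while the right-hand one vanishes. The same objection applies to your closing remark that the reflected-image piece can be ``dominated by $G$'': the reflected piece must be differenced and estimated on its own, not merely bounded above. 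The paper avoids this entirely by using the \emph{exact} representation (\ref{new-eq-6})--(\ref{reflprin}), $G_{\rm v}=e^{-\frac{{\rm v}}{2}((x-{\rm v}t)-(y-{\rm v}s))-\frac{{\rm v}^2}{4}(t-s)}\,[G-G^{\rm refl}]$ in the moving frame, so that the increment becomes a difference of completely explicit Gaussian expressions, which is then split (as in (\ref{frank})) and computed term by term. Your moving-frame tilting heuristic does correctly recover condition (\ref{ldcond56}) for the \emph{diagonal} quantity $\int\!\!\int g^2\eta$, which is why it looks plausible, but it does not transfer to the increments that actually define $\mathcal B$.

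A second, related gap sits inside your use of the shift identity $e^{\mu z}G(u,z)=e^{\mu^2u}G(u,z-2\mu u)$ to absorb the weight. Applied to the two kernels in a spatial or temporal increment, the identity produces \emph{different} multiplicative prefactors (e.g.\ $e^{-\lambda\tilde x}$ versus $e^{-\lambda\tilde x'}$, and $e^{\lambda^2(t-s)/2}$ versus $e^{\lambda^2(t'-s)/2}$) and different recentrings, so the weighted difference is not a difference of two identically tilted Gaussians plus nothing: there are cross terms proportional to the difference of the prefactors. These cross terms are exactly what the paper must estimate separately --- they are the $(\g'-1)^2$ term in (\ref{frank}) and the integrals $\i_3,\i_4$ in the sub-lemma, and extracting the sharp rates $|z|+|h|^{1/2}$ from them (together with the polynomial weight) is the substance of the proof. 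Your proposal defers all of this to ``bookkeeping,'' but without the exact Girsanov--reflection representation the bookkeeping cannot even be set up, because the object you propose to compute with (the free kernel) is not an upper bound for the object you need to control.
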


{\it Proof.} The only statement that is not elementary is (ii).
We have to estimate
\begin{eqnarray}\label{defofc} &&\int\int_0^\infty  [g(s,y,t+h, x+z)-g(s,y,t,x)]^2 (1+ |y-{\rm v} s|) \exp\{ \lambda |y-{\rm v} s|\} dsdy\nonumber\\ && \le C (1+|b|) e^{\lambda |b|}[z + h^{1/2}]
\end{eqnarray}
for $  t\in [0,T]$, $x-{\rm v} t\in [b-1,b]$, with $g(s,y,t,x)$ in (\ref{418}).
First of all, note that we can express $G_{\rm v}$ in terms of $G_0$, which in
turn can be written explicitly in terms of the heat kernel $G$ (see (\ref{e17})) using reflection;
\begin{equation}
\label{new-eq-6}
G_{\rm v}(s,y,t,x) = e^{-\frac{{\rm v}}{2}((x-{{\rm v}}t)-(y-{{\rm v}}s)) -\frac{{\rm v}^2}{4}(t-s)}
          G_{0}(s,y-{{\rm v}}s,t,x-{{\rm v}}t),
\end{equation}
\begin{equation} \label{reflprin}
G_{0}(s,y,t,x)=G(s,y,t,x)-G(s,y,t,-x), \qquad x,y<0.
\end{equation}
After  change of variables, the left hand side of (\ref{defofc}) becomes, with $x'=x-{\rm v} t$
and $\g = z-{\rm v} h$,
\begin{eqnarray}&&
\int_{y\le 0}\int \Big( {\mathbf 1}_{0\le s\le t+h} e^{ - \frac{{\rm v}}2 
  ( x' -y  + \g) - \a (t-s+h)} G_0(s,y, t+h, x'+\g) \nonumber \\ 
&& \qquad  -  {\mathbf 1}_{0\le s\le t} e^{ - \frac{{\rm v}}2 ( x' -y  ) - \a
  (t-s)} G_0(s,y, t, x') \Big)^2 (1+|y|) e^{\lambda |y|} dsdy.\label{firstest}
\end{eqnarray}
with $\a=\frac{{\rm v}^2}4-a$.
Estimating the two pieces of the right hand side of  (\ref{reflprin})
by using that the square of the sum is bounded by twice the sum of the squares 
we see that (\ref{firstest}) is bounded by the sum over $\iota=\pm1$ of
\begin{eqnarray}&&
 \int_{y\le 0} \int \Big( {\mathbf 1}_{-h\le s\le t}  
   \frac{\exp\{ - \frac{\rm v}2 ( x' -y  + \g) - \a
(s+h)- \frac{ (x'- \iota y+\g)^2}{4(s+h)}\}}{\sqrt{s+h}} \nonumber \\ 
&& ~~  -  {\mathbf 1}_{0\le s\le t}\frac{ \exp\{ - \frac{{\rm v}}2 ( x' -y  ) 
   - \a s- \frac{ (x'- \iota y)^2}{4s}\}}{\sqrt{s}}  \Big)^2(1+|y|) 
   e^{-\lambda y} \frac{dsdy}{4\pi}.
\label{firstest2}
\end{eqnarray} 
Note that we have also changed variables $t-s\mapsto s$.  Changing
$y\mapsto y+ x'$ and rearranging a little this becomes  
$(A_1+A_0 |x'|)e^{-\lambda x'}$ where $A_i$ is $(4\pi)^{-1}$ times the sum 
over $\iota=\pm1$ of
\begin{equation}
 \int\!\!\!\int 
e^{-2\a s+({\rm v}-\lambda) y}(1+\iota |y|)
\Big({\mathbf 1}_{-h\le s\le t}
\frac{ 
\g' e^{ 
-\frac{ (y+ (1-\iota) x - \iota \g)^2}{ 4(s+h)} } } {\sqrt{s+h} }
 - {\mathbf 1}_{0\le s\le t}  \frac{ e^{-\frac{ (y+ (1-\iota) x )^2}{ 4s} } } {\sqrt{s} }
  \Big)^2  dsdy\label{secest}
\end{equation}  
with $\g'=  \exp\{ 
\frac{{\rm v}\g}{2} -\a h\}$. Consider the $\iota=+1$ term.  We estimate  
  \begin{eqnarray} &&
\Big({\mathbf 1}_{-h\le s\le t}
\frac{ 
\g' e^{ 
-\frac{ (y -  \g)^2}{ 4(s+h)} } } {\sqrt{s+h} }
 - {\mathbf 1}_{0\le s\le t}  \frac{ e^{-\frac{ y^2}{ 4s} } } {\sqrt{s} }
  \Big)^2 \le 3\cdot{\mathbf 1}_{0\le s\le t}\Big(
\frac{ 
 e^{ 
-\frac{ (y -  \g)^2}{ 4(s+h)} } } {\sqrt{s+h} }
 -   \frac{ e^{-\frac{ y^2}{ 4s} } } {\sqrt{s} }
  \Big)^2\nonumber
\\
 &&+
  3\cdot{\mathbf 1}_{-h\le s\le 0}
\frac{ 
\g'^2 e^{ 
-\frac{ (y -  \g)^2}{ 2(s+h)} } } {s+h }+3\cdot{\mathbf 1}_{0\le s\le t} 
\frac{ 
(\g' -1)^2e^{ 
-\frac{ (y -  \g)^2}{ 2(s+h)} } } {s+h }.\label{frank}
 \end{eqnarray}
 There are also three analogous terms corresponding to $\iota=-1$.
 All six terms are estimated by explicit computation.  Since it is
 very tedious, we present only the worst case which is the first
 term on the right hand side of (\ref{frank}) with $\iota=1$.  Call $\b= {\rm v}-\lambda$.

\begin{lemma}
For $h,\g \in (0,1)$,  $\a,\b\in (0,3)$ and \begin{equation}\cu=2\alpha-\beta^2>0,  \end{equation}
 there exists a $C_{(\ref{carlsest})}<\infty$ such that for all $t>0$,
 \begin{equation}\label{carlsest}
 \int_0^t\int e^{-2\a s+\b y}(1+ |y|) 
\left(\frac{  e^{ 
-\frac{ (y -  \g)^2}{ 4(s+h)} } } {\sqrt{s+h} }
 -   \frac{ e^{-\frac{ y^2}{ 4s} } } {\sqrt{s} }
  \right)^2 dy ds \le C_{(\ref{carlsest})}[ |\gamma| + |h|^{1/2} ] 
  \end{equation}
\end{lemma}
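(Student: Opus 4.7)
The natural strategy is to split the difference of Gaussians into a pure spatial shift and a pure temporal shift, estimate each, and use $(A+B)^2\le 2A^2+2B^2$. Write $p_u(y):=u^{-1/2}e^{-y^2/(4u)}$ so that the integrand is $[D_{sp}+D_{tm}]^2$ where
\begin{equation*}
 D_{sp}(s,y)=p_{s+h}(y-\gamma)-p_{s+h}(y),\qquad D_{tm}(s,y)=p_{s+h}(y)-p_s(y).
\end{equation*}
The spatial piece will contribute $O(\gamma)$ and the temporal piece $O(\sqrt h)$; the hypothesis $q=2\alpha-\beta^2>0$ will give the integrability in $s$ at infinity after completing the square. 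Throughout I use the identity $\beta y-\tfrac{y^2}{2u}=-\tfrac{(y-\beta u)^2}{2u}+\tfrac{\beta^2 u}{2}$, so that any Gaussian integral against $e^{\beta y}(1+|y|)$ produces a factor $\sqrt{u}\,e^{\beta^2 u/2}(1+O(\sqrt u+\beta u))$ times appropriate polynomial moments.

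\textbf{Spatial part.} I would write $D_{sp}=-\int_0^\gamma\partial_y p_{s+h}(y-w)\,dw$, use Cauchy--Schwarz to get $D_{sp}^2\le \gamma\int_0^\gamma(\partial_y p_{s+h}(y-w))^2dw$, and plug in $\partial_y p_u(y)=-\tfrac{y}{2u}p_u(y)$. After shifting $y\mapsto y+w$ and completing the square, the $y$-integral yields a factor $(s+h)^{-1/2}e^{\beta^2(s+h)/2}$ (up to bounded polynomial corrections). Integrating against $e^{-2\alpha s}$ and exploiting $q>0$ makes $\int_0^\infty(s+h)^{-1/2}e^{-qs}ds$ finite, giving a contribution $\le C\gamma^2\le C\gamma$ since $\gamma\le 1$.

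\textbf{Temporal part---the main obstacle.} The naive bound $D_{tm}^2\le h\int_s^{s+h}(\partial_u p_u(y))^2du$ combined with $\partial_u p_u=\partial_y^2 p_u=\bigl(\tfrac{y^2}{4u^2}-\tfrac1{2u}\bigr)p_u$ yields, after integrating $y$, an estimate of order $h\int_s^{s+h}u^{-5/2}e^{\beta^2 u/2}du$. The singularity $u^{-5/2}$ is \emph{not} integrable near $u=0$, so one cannot simply integrate this for $s\in[0,h]$. I will resolve this by splitting the $s$-integral at $s=h$.

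\textbf{Short-time regime $s\in[0,h]$.} Here I drop the cancellation entirely and use $D_{tm}^2\le 2p_{s+h}^2+2p_s^2$. Each square, integrated against $e^{\beta y}(1+|y|)$, is $\le C u^{-1/2}e^{\beta^2 u/2}$ by the completing-the-square calculation. The $s$-integral $\int_0^h[s^{-1/2}+(s+h)^{-1/2}]\,ds$ gives $O(\sqrt h)$.

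\textbf{Long-time regime $s\in[h,t]$.} Here the Cauchy--Schwarz bound is safe because $u\in[s,s+h]\subset[s,2s]$ so $u^{-5/2}\le s^{-5/2}$; hence the $y$-integrated bound reads $Ch^2 s^{-5/2}e^{\beta^2 s/2}$. The $s$-integral $\int_h^\infty h^2 s^{-5/2}e^{-qs}ds\le h^2\int_h^\infty s^{-5/2}ds=\tfrac23 h^{1/2}$ using only the $s^{-5/2}$ singularity; the $e^{-qs}$ factor with $q>0$ handles integrability at infinity.

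Combining the two regimes gives a temporal contribution $\le C\sqrt h$. Adding the spatial and temporal bounds and using $\gamma,h\in(0,1)$, the total is $\le C_{(\ref{carlsest})}[|\gamma|+|h|^{1/2}]$ as claimed. The main obstacle is purely the $u^{-5/2}$ singularity in the temporal estimate; the short-time/long-time split is precisely what converts the naive $O(h)$ bound into the correct $O(\sqrt h)$ modulus of continuity for the heat kernel in time.
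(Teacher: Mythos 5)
Your overall decomposition (spatial shift plus temporal shift, crude bound near $s=0$, derivative bound for larger $s$) is the same strategy as the paper's, and your temporal part is correct: the split of $\int_0^t ds$ at $s=h$, the crude bound $D_{tm}^2\le 2p_{s+h}^2+2p_s^2$ giving $O(\sqrt h)$ on $[0,h]$, and the bound $h^2s^{-5/2}$ on $[h,t]$ all match the paper's terms $\i_1,\i_2,\i_3(h)$. However, your spatial part contains a genuine computational error that invalidates the claimed $O(\gamma^2)$ bound.

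The problem is the assertion that the $y$-integral of $\bigl(\partial_y p_{s+h}(y-w)\bigr)^2$ against $e^{\beta y}(1+|y|)$ "yields a factor $(s+h)^{-1/2}e^{\beta^2(s+h)/2}$." Since $(\partial_y p_u(y))^2=\frac{y^2}{4u^3}e^{-y^2/(2u)}$ and $\int y^2e^{-(y-\beta u)^2/(2u)}dy\sim u^{3/2}$, the correct factor is $u^{-3/2}e^{\beta^2u/2}$, not $u^{-1/2}e^{\beta^2u/2}$ — an extra full power of $u^{-1}$, exactly as in your temporal computation where $(\partial_u p_u)^2$ produced $u^{-5/2}$ rather than $u^{-1/2}$. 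Consequently the spatial contribution along your route is
\begin{equation*}
C\gamma^2\int_0^t (s+h)^{-3/2}e^{-qs}\,ds \;\approx\; C\gamma^2 h^{-1/2},
\end{equation*}
which is \emph{not} bounded by $C(\gamma+\sqrt h)$ (take $\gamma=1/2$, $h\to0$). The "too good" conclusion $O(\gamma^2)$ should itself have been a warning: the lemma's right-hand side is first order in $\gamma$, and that first power is genuinely attained because of the small-time singularity.

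The fix is to treat the spatial part exactly as you treated the temporal one: split the $s$-integral at $s+h=\gamma^2$. For $s+h\le\gamma^2$ use the crude bound $D_{sp}^2\le 2p_{s+h}(y-\gamma)^2+2p_{s+h}(y)^2$, whose $y$-integral is $O((s+h)^{-1/2})$, so $\int_0^{\gamma^2}(s+h)^{-1/2}ds\le 2\gamma$. For $s+h\ge\gamma^2$ your Cauchy--Schwarz bound gives $\gamma^2\int_{\gamma^2}^\infty r^{-3/2}dr=2\gamma$. Both pieces are $O(\gamma)$. This is precisely the paper's decomposition into $\i_3(\gamma^2-h)$ (the crude short-time piece) and $\i_4$ (the long-time piece, where the paper estimates $\int(e^{-(y-\gamma)^2/(4r)}-e^{-y^2/(4r)})^2dy\le Cr^{-1/2}\gamma^2$ and integrates $r^{-1}\cdot r^{-1/2}\gamma^2$ from $\gamma^2$ to $\infty$). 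With that repair your argument is complete and essentially identical to the paper's.
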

\begin{proof}
The left hand side is bounded by a constant multiple of 
$\i_1+\i_2 + \i_3 (\g ^{2}-h)+\i_3 (h)  + \i_4
$
 where
\begin{eqnarray*}\i _{1} &=&
\int_{h}^{t}\int e^{-2\a s+\b y}(1+|y|)
    \left(\frac{e^{-\frac{y^2}{4(s+h)}}}{\sqrt{s}}
         - \frac{e^{-\frac{y^2}{4s}}}{\sqrt{s}}\right)^2 dyds\\
         \i _{2} &=&\int_{h}^{t}\int e^{-2\a s+\b y}(1+|y|)
    \left(\frac{e^{-\frac{y^2}{4(s+h)}}}{\sqrt{s+h}}
         - \frac{e^{-\frac{y^2}{4(s+h)}}}{\sqrt{s}}\right)^2 dyds \\
\i_3 (a) &=& \int_{0}^{|a|}\int
    e^{-2\a s+\b y}(1+|y|)\left(
    \frac{e^{-\frac{y^2}{2(s+h)}}}{s+h}  +     \frac{e^{-\frac{(y-\g )^2}{2s}}}{s} \right)dyds \\
\i _{4} &=&  \int_{0\vee(\g ^{2}-h)}^{t}\int
     e^{-2\a s+\b y}\frac{(1+|y|)}{s+h}
    \left(e^{-\frac{(y-\g )^2}{4(s+h)}}
        - e^{-\frac{y^2}{4(s+h)}}\right)^2 dyds  \\
\end{eqnarray*}
In the proof $C$ will denote any finite constant, possibly depending on $\a,\b$ and $\cu$.  Its value
will change from line to line.

{\it Estimation of $\i_1\le Ch^{1/2}$.}  By the mean value theorem, there exists $\theta\in[0,1]$ such that
\begin{equation}
\i _{1} = \frac{1}{16} h^2\int_{h}^{t}\int e^{-2\a s+\b y}(1+|y|)
 s^{-1}e^{-\frac{y^2}{2(s+\theta h)}} y^4(s+\theta h)^{-4}dyds.
\end{equation}
Since we are integrating over $s\in[h,t]$, we have 
$s\leq s+\theta h\leq 2s$ and therefore
\begin{eqnarray*}
\i _{1} &\leq
& Ch^2\int_{h}^{t}\int e^{-2\a s+\b y}
  e^{-\frac{y^2}{4s}} (y^4+|y|^5)s^{-5}dyds\\ &=&  Ch^2\int_{h}^{t}
    e^{-{\cu }s}s^{-5} \int 
  e^{-\frac{1}{4s}(y-2s\b )^2} (y^4+|y|^5) dyds.
\end{eqnarray*}Recall that
$
|a+b|^n\leq 2^{n-1}(|a|^n+|b|^n)$.  
Thus 
$y^4  = ((2s)^{\frac{1}{2}}z + 2s\b )^4 \leq 2^{5}z^4s^2 + 2^{7}s^4\b ^4 $,
$|y|^5  = |(2s)^{\frac{1}{2}}z + 2s\b |^5 \leq 2^{\frac{13}{2}}z^5s^{\frac{5}{2}} 
           + 2^{9}s^5\b ^5
$.
  After the change of 
variables change variables to 
$
z = (y-2s\b)/\sqrt{2s}$ and integrating we can bound the last term by
\begin{equation}
Ch^2 \int_{h}^{t}
    e^{-{\cu }s}       \left(s^{-\frac{5}{2}}+s^{-2}+
      s^{-\frac{1}{2}}\b ^4+s^{\frac{1}{2}}\b ^5\right) ds 
\end{equation}
where $C$ is a universal constant.
Since $s>h$ in the region of integration and $h\leq1$, this is bounded above by
\begin{equation}
Ch^2\int_{h}^{t}
    e^{-{\cu }s}      \left(h^{-\frac{1}{2}}s^{-2}+s^{-2}+
      h^{-\frac{1}{2}}\b ^4+h^{\frac{1}{2}}\b ^5\right) ds 
\end{equation}
The estimate then follows from 
$
\int_{h}^{t} e^{-{\cu }s}s^{-2}ds 
\leq \int_{h}^{1} s^{-2}ds + \int_{1}^{t} e^{-{\cu }s}ds 
\leq \frac{1}{h} + \cu^{-1} $.

{\it Estimation of $
\i _{2} \leq  C h^{\frac{1}{2}}
$.}  By the mean value theorem, there exists $\theta\in[0,1]$ such that 
\begin{eqnarray*}
\i _{2} &=&\frac14 h^2\int_{h}^{t}\int e^{-2\a s+\b y}(1+|y|)
     e^{-\frac{y^2}{2(s+h)}}(s+\theta h)^{-3}dyds \\
&\leq&  Ch^2\int_{h}^{t}\int e^{-2\a s+\b y}(1+|y|)
     e^{-\frac{y^2}{2(2s)}}s^{-3}dyds\\
&=& Ch^2\int_{h}^{t}
    e^{-{\cu }s}s^{-\frac{5}{2}}
    (\pi^{\frac{1}{2}}+2s^{\frac{1}{2}}+2\pi^{\frac{1}{2}}s\b)ds
\end{eqnarray*}
The estimate then follows from 
$
\int_{h}^{t} e^{-{\cu }s}s^{-\frac{5}{2}}
    (1+s^{\frac{1}{2}}+s)ds
\leq \int_{h}^{\infty} 
    \left(s^{-\frac{5}{2}} +  s^{-2}
    +s^{-\frac{3}{2}}\right) ds    
\leq C h^{-\frac{3}{2}}
$.

{\it Estimation of  $
\i_3 (\gamma^2-h)+ \i_3 (h)\leq
C[h^{\frac{1}{2}}+\gamma]
$.}   This just uses 
$
\int(1+|y|)e^{-\frac{(y-a)^2}{2r}}dy 
\leq \int(1+a+|y-a|)e^{-\frac{(y-a)^2}{2r}}dy
\le C((1+a)r^{\frac{1}{2}}  +r)
$
and $4\alpha-\beta^2>0$.

{\it Estimation of $\i _{4} \leq C\g $.}  First we change variables to $r=s+h$, with $h$ constant, and use the fact that 
$2\a r\geq0$ and $t<\infty$ to see that  
\begin{equation}
\i _{4} \leq e^{2\a h}\int_{h\vee\g ^{2}}^{\infty}\int
	I(r,y) dydr 
\label{est-A2-1}
\end{equation}
where
\begin{eqnarray}
\label{est-A2-2}
I(r,y) &=&  	r^{-1}  (1+|y|)
    \left(e^{-\frac{(y-\g-br)^2}{4r}}e^{\frac{\g \b}{2}}e^{\frac{\b^2r}{4}}
        - e^{-\frac{(y-\b r)^2}{4r}}e^{\frac{\b^2r}{4}}\right)^2     \nonumber\\
\label{est-A2-4}
&\leq& 2e^{-(2\a  + \frac{\b^2}{2})r}r^{-1}(1+|y|)
    e^{-\frac{(y-\g-\b r)^2}{2r}}\left(e^{\frac{\g \b}{2}} - 1\right)^2  \\
&& + 2e^{-(2\a  + \frac{\b^2}{2})r}r^{-1}(1+|y|)
    \left(e^{-\frac{(y-\g-\b r)^2}{4r}}
        - e^{-\frac{(y-\b r)^2}{4r}}\right)^2   \nonumber\\
&=:&	 I_1(r,y) + I_2(r,y)  \nonumber
\end{eqnarray}
We have
$
(e^{\frac{\g \b}{2}} - 1)^2  \leq C\g^2
$ and furthermore $\int (1+|y|)
    e^{-\frac{(y-\g-\b r)^2}{2r}}dy \le  \int (1+\g+\b r + |y-\g-\b r|)  e^{-\frac{(y-\g-\b r)^2}{2r}} dy
    \le C [r^{1/2}  (1+\g+\b r) + r]$ 
so, 
\begin{equation}\int_0^\infty \int I_1(r,y) dydr  \le C \gamma^2.
\end{equation}
Similarly we bound $\int I_2(r,y) dy
$ by 
\begin{equation}\label{fro}
  2\int
   e^{-(2\a  + \frac{\b^2}{2})r}
 r^{-1}(1+\b r+\frac{\g}{2}+|y-\b r-\frac{\g}{2}|)
    (e^{-\frac{(y-\g-\b r)^2}{4r}}
        - e^{-\frac{(y-\b r)^2}{4r}})^2 dy.  
\end{equation}
We need two standard estimates: For $r>0$,
\begin{eqnarray}
\label{gd:1} &&
\int (e^{-\frac{(y-\g)^2}{4r}}-e^{-\frac{y^2}{4r}})^2dy
\leq C r^{-\frac{1}{2}}\g^2\\ && \label{gd:2}
\int |y-\frac{\g}{2}|
    (e^{-\frac{(y-\g)^2}{4r}}-e^{-\frac{y^2}{4r}})^2dy
\leq C(r^{\frac{1}{2}}\g + \g^2)
\end{eqnarray}
Changing variables in (\ref{fro}) to $z=y-\beta r$ and using 
(\ref{gd:1}), (\ref{gd:2}) we have
\begin{equation}
\label{final-eq-lemma-8}
\int_{\g ^{2}}^{\infty}\int I_2(r,y) dydr  
\leq   C\g 
\end{equation} which gives the required estimate for $\i_4$  and completes the proof of the lemma.  
\end{proof}

This gives us a lemma which controls the large deviations on a long
time interval.

\begin{lemma} \label{lemma9a}Let \begin{equation}\label{zedupper}
Z (t,x) =  \int \int_0^t  e^{-(t-s)} G_{\rm v} (s,y,t,x) \sigma(s,y)  W(dsdy)
\end{equation} where 
$\sigma$ is nonanticipating.  
Assume  \begin{equation}
\sigma^2(t,x)\le \bar F(x-{\rm v} t) + 3e^{-{\lambda}(x-{\rm v} t)}.
\end{equation}
Then for $T$ be as in (\ref{teeee}),
\begin{equation}
\label{vFunder3}
P(\exists (t,x)\in [0,T]\times\mathbf R, x\le {\rm v} t :   Z (t,x)\ge \epsilon^{-1}e^{-{\lambda} (x-{\rm v} t) })
\le 1/16
\end{equation} 
and with $M$ as in (\ref{eeeem}),
\begin{equation}\label{oldset} P\big(\sup_{0\le t\le T, ~{\rm v} t-M \le x\le {\rm v} t
} |Z(t,x)| \ge \epsilon^{-1}u^*/160\big) \le 1/32
\end{equation}
\end{lemma}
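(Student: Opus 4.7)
The plan is to reduce both bounds to applications of the large deviation estimate in Lemma \ref{ldlemma1}, where the difficulty has already been reduced to the variance bound furnished by Lemma \ref{variance-lemma}. First I would verify that the hypotheses of Lemma \ref{variance-lemma} (ii) are met with the integrand $g(s,y,t,x)=e^{-(t-s)}\mathbf 1(0\le s\le t)G_{\rm v}(s,y,t,x)$ and with $\eta(s,y)=C(1+|y-{\rm v} s|)e^{\lambda|y-{\rm v} s|}$, where $C$ absorbs the $\bar F$ piece together with the factor $3$ in front of $e^{-\lambda(x-{\rm v} t)}$ in the assumption on $\sigma^2$; the decay condition (\ref{ldcond56}) holds here because $a=-1$ and, for our ${\rm v}\simeq 2$ and $\lambda={\lambda}(\delta)\in (1,1+\sqrt2)$ from (\ref{214}), ${\rm v}\lambda-\lambda^2/2\ge -2=2a$ with room to spare. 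Thus on each strip $\Gamma_{b,T}$ with $b\le 0$ we have $\mathcal B\le C_{(\ref{lbA})}(1+|b|)e^{\lambda|b|}$.

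For (\ref{vFunder3}), cover $\{x\le {\rm v} t\}$ by the strips $\Gamma_{b,T}$, $b=0,-1,-2,\dots$, and note that on $\Gamma_{b,T}$ one has $\epsilon^{-1}e^{-\lambda(x-{\rm v} t)}\ge \epsilon^{-1}e^{\lambda(|b|-1)}$. Applying Lemma \ref{ldlemma1} with $\ell_b=\epsilon^{-1}e^{\lambda(|b|-1)}$ gives
\begin{equation}
P\Big(\sup_{\Gamma_{b,T}} Z \ge \ell_b\Big) \le 4T\exp\bigl(-C^{-1}\epsilon^{-2}T^{-2}(1+|b|)^{-1}e^{\lambda|b|}\bigr),
\end{equation}
provided the side condition (\ref{lambdacond}) holds, which it does because $\ell_b/(T\mathcal B^{1/2})\gtrsim \epsilon^{-1}e^{\lambda|b|/2}|\log\epsilon|^{-4}(1+|b|)^{-1/2}$ is $\gg 1$ once $\epsilon$ is small (using $T=\lfloor|\log\epsilon|^4\rfloor$). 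The key point is that for $b=0$ the exponent is $-C^{-1}\epsilon^{-2}|\log\epsilon|^{-8}$, which is super-polynomially small in $\epsilon$, and for $|b|\ge 1$ the $e^{\lambda|b|}$ factor ensures geometric decay of the summand. A geometric-series estimate therefore yields a total bound well under $1/16$, provided $\epsilon_0$ is chosen small enough.

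For (\ref{oldset}), apply the same machinery on the single (or $\mathcal O(M)$-many unit) strip(s) covering ${\rm v} t-M\le x\le {\rm v} t$, where $|b|\le M$ and $M=M(u^*)$ is a \emph{fixed constant} depending only on $u^*$. Here $\mathcal B\le C_{(\ref{lbA})}(1+M)e^{\lambda M}$ is a constant, and setting $\ell=\epsilon^{-1}u^*/160$ in Lemma \ref{ldlemma1} gives
\begin{equation}
P\bigl(\sup |Z|\ge \epsilon^{-1}u^*/160\bigr)\le 4TM\exp\bigl(-C(u^*)\epsilon^{-2}|\log\epsilon|^{-8}\bigr),
\end{equation}
which is much less than $1/32$ for small $\epsilon$; the factor $M$ is absorbed since $e^{-C\epsilon^{-2}|\log\epsilon|^{-8}}$ beats any polynomial in $|\log\epsilon|$.

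The main obstacle is routine bookkeeping rather than a conceptual difficulty: one must (i) confirm that the Gaussian tail exponent $\epsilon^{-2}|\log\epsilon|^{-8}$ always dominates the growth of $T$ and $M$, and (ii) verify carefully the two side conditions, namely the Lemma \ref{variance-lemma} hypothesis ${\rm v}\lambda-\lambda^2/2\ge 2a$ for the specific $\lambda={\lambda}(\delta)$ used in the upper-bound argument, and the threshold (\ref{lambdacond}) for every strip. Both are satisfied robustly given the parameter choices (\ref{epar})--(\ref{teeee}), so the lemma follows.
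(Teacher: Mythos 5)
Your argument is correct and is essentially the paper's own proof: decompose $\{x\le {\rm v}t\}$ into the unit strips $\Gamma_{b,T}$, invoke Lemma \ref{ldlemma1} with the variance bound $\mathcal B\le C(1+|b|)e^{\lambda|b|}$ from Lemma \ref{variance-lemma}(ii) (checking ${\rm v}\lambda-\lambda^2/2\ge -2$ exactly as you do), and sum the resulting Gaussian tails geometrically in $|b|$; the paper treats (\ref{oldset}) the same way, only implicitly. The one bookkeeping point you glossed over is that Lemma \ref{ldlemma1} bounds increments, so one must take $t'=0$ (where $Z=0$) and pay $\Phi(d)\le 2\sqrt{T\log T}$, i.e.\ use $\rambda=\ell_b/(2\sqrt{T\log T})$ rather than $\ell_b$; this only changes your exponent $\epsilon^{-2}T^{-2}$ to $\epsilon^{-2}T^{-3}(\log T)^{-1}$, which is still super-polynomially small and does not affect the conclusion.
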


\begin{proof}
The left 
    hand side of  (\ref{vFunder3}) is bounded by 
    \begin{equation}\label{239}
\sum_{n=0}^\infty P(\exists (t,x)\in \Gamma_{-n,T} :  Z (t,x)\ge \epsilon^{-1} e^{{\lambda}n })
\end{equation}
where $\Gamma_{-n,T}$ is defined in (\ref{4p1}).
Applying Lemma \ref{ldlemma1} for each
    $n$ with 
\begin{equation}\rambda =\epsilon^{-1}e^{{\lambda} n }/2\sqrt{T\log T}\end{equation} using 
 $t'=0$ and $\Phi \le 2\sqrt{T\log T}$  and (\ref{lbA}),
%
%
we obtain that 
 (\ref{239}) is bounded by
    \begin{equation}\label{239a}
4|\log\epsilon|^4\sum_{n=0}^\infty  \exp\{ -A(n+1)^{-1}
e^{{\lambda} n}\} \end{equation} 
with 
$ A=\epsilon^{-2}C_{(\ref{239a})}^{-1} |\log\epsilon|^{-12}|\log\log\epsilon^{-1}|^{-1} $
as long as 
\begin{equation}\rambda =\epsilon^{-1}e^{{\lambda}n }/2\sqrt{T\log T}\ge C_{(\ref{lambdacond})} T{\mathcal B}^{1/2}\end{equation}
From (\ref{214}), ${\lambda}-1\ge 1$ if $\delta>0.2$.  Hence
$(n+1)^{-1}e^{{\lambda} n}
\ge n+1$ for all $n\ge 0$ and (\ref{239a})
is bounded above by
 \begin{equation}\label{239b}
4|\log\epsilon|^4\sum_{n=0}^\infty  \exp\{ -A
(n+1)\} \le 8|\log\epsilon|^4e^{-A}\end{equation} 
as long as $A\ge \log 2$
which  bounded by $1/16$ for $\epsilon\le \epsilon_0$. 
\end{proof}

\section{Proof of Lemma \ref{lem1}}
\setcounter{equation}{0}
\label{lemma t-new}    
    

Recall that  $v$ and $\varrho$ are the solution of (\ref{drkpp}) and 
(\ref{compeq}) with initial data $\bar F$ as in (\ref{F}).
$\gamma, L, T, M$ are as in (\ref{epar2}), (\ref{elldef}), (\ref{tog}).


Lemma \ref{lem1} is basically a result about how the stochastic perturbation of a partial differential 
equation (\ref{drkpp})  stays close to its deterministic version.   Such theorems
are fairly standard, but we need to stay close on a fairly long time interval $[0,T]$
where $T={\mathcal O}(|\log\epsilon|^4)$ as required by (\ref{236}).
First of all, let $\bar v$ be the solution of \begin{equation}
\label{drkpp3}\left\{\begin{array}{ll}
\partial_t \bar v 
= \partial_x^2 \bar v +  \bar f(\bar v) + \epsilon\sigma(\bar v)\dot{W_1}, & x<{\rm v} t, \\
\bar v(t,x) = 0,& x\ge {\rm v} t, \end{array}\right.
\end{equation}  with $\bar v(0,x) = \bar F(x)$ with $\bar v \ge v$
by the comparison theorem, and let $\bar\varrho$ be the solution of
(\ref{compeq3}).  It suffices to show that
\begin{equation}\label{prechi2}
P\left(\exists t\in [0,T] :   \bar v(t,x)>\bar \varrho(t, x)+   3 {\lambda}
e^{-{\lambda}(x-{\rm v} t)}{\rm ~for~some~}x\in\mathbb R
\right)\le 1/16.
\end{equation}  We have
\begin{equation}\label{234a}
\partial_t (\bar{v}-\w) = \partial_x^2 (\bar{v}-\w) + \bar f(\bar{v})-\bar f(\w) + \epsilon \sigma(\bar{v}) \dot W_1\qquad x<{\rm v} t
\end{equation}  and $\bar{v}-\w=0$ on
$x\ge {\rm v} t$ and $t=0$.  One  easily checks that 
\begin{equation}
\label{eq:1}
\bar f(\bar{v})-\bar f(\w) \le 2- (\bar{v}-\w).
\end{equation}
Using the same ideas as in the proof of Proposition~\ref{comparison} we will show now that 
\begin{equation}
\label{eq:3}
\bar v-\w \le y
\end{equation}
with $y$ a solution of
\begin{equation}
\label{eq:2}
\partial_t y = \partial_x^2 y + 2- y + \epsilon \sigma(v)\dot W_1.
\end{equation} on $x\le {\rm v} t$, and $y=0$ otherwise. To prove this define $\tilde{w}$ to be the solution to 
$$
\partial_t \tilde{w} = \partial_x^2 \tilde{w} + 2- (\bar{v}-\w) - (\bar f(\bar{v})-\bar f(\w))
 -\tilde{w}.
$$
on  $x\le {\rm v} t$, and $\tilde w=0$ otherwise. Note that by~(\ref{eq:1}), $2- (\bar{v}-\w) - (\bar f(\bar{v})-\bar f(\w))\geq 0$ and hence $\tilde{w}\geq 0$. Now define 
$$ y= \bar{v}-\w +\tilde w,$$
and by trivial calculations we get that $y$ satisfies~(\ref{eq:2}) and since $\tilde w\geq 0$, (\ref{eq:3}) 
follows. 

Using the integrating factor $e^{t}$ we obtain
\begin{equation}
\label{236second}\bar{v}(t,x)-\w(t,x) \le 
  \epsilon  Z(t,x)  +2
\end{equation}
where, with $G_{\rm v}(s,y,t,x)$ as in (\ref{e26}),
\begin{equation}
Z (t,x) =  \int \int_0^t  e^{-(t-s)} G_{\rm v} (s,y,t,x) \sigma(s,y)  W_1(dsdy)
\end{equation}
 So it suffices to show that
\begin{equation}
\label{vFunder4}
P(\exists (t,x)\in [0,T]\times\mathbf R, x\le {\rm v} t :   Z (t,x)\ge \epsilon^{-1}e^{-{\lambda} (x-{\rm v} t) })
\le 1/16.
\end{equation} 
Note that when we do this we can assume without loss of generality that 
\begin{equation}\label{bdonsig}
\sigma^2(t,x)\le \bar\varrho(t,x) + 3e^{-{\lambda}(x-{\rm v} t)}.
\end{equation}
For if $\tilde v$ is a solution of (\ref{drkpp2}) with $\sigma^2(\tilde v(t,x))$ replaced by $$
\tilde\sigma^2(\tilde{v},t,x)=\min\left(\sigma^2(\tilde{v}(t,x)),\varrho(t,x)+3e^{-{\lambda}(x-{\rm v} t)}\right)$$ then $\bar v=\tilde v$ up to time 
\[\hat \tau=\inf\{t\ge0~:~ \bar v(t, x) \ge \varrho(t,x)+3e^{-{\lambda}(x-{\rm v} t)} {\rm ~ for ~some~ }  ~x\in\mathbf R\}\,\]
and hence it suffices to prove (\ref{vFunder4})  under (\ref{bdonsig}).  
The result now follows from Lemma \ref{lemma9a}. \qed

 \section{Proof of Lemma \ref{lemma2}}\setcounter{equation}{0}
\label{lemma t-new-second}   

    $i.$ of Lemma \ref{lemma4} is actually an upgrade of Lemma \ref{lem1} which is a bit stronger than Lemma \ref{lemma2}.  $ii.$ is similar to $i.$ It is needed in Section \ref{killing} to control the maximum of $w$.   Recall $\bar F$ from (\ref{F}) and define \begin{equation}\label{underlineF}
\underline{F}(x) = \left\{ \begin{array}{l} 0,\;\; x\ge 0,\\ 
                     u^*/40,\;\; x\in(-M,0],\\ 
                      \bar F(x) + 3{\lambda}e^{-{\lambda} x}, \;\;x\leq -M, \end{array} 
                      \right.
                      \end{equation}  
                      
                      \begin{lemma}\label{lemma4} $i.$
                     Let $v$ be the solution of (\ref{drkpp}) with initial
data (\ref{F}).  Let $\varrho$ be the solution of (\ref{compeq3}) with the
same initial data.  Let $\gamma,L,T,M$ be as in (\ref{eeeem})-(\ref{teeee}).  Then
\begin{equation}\label{prechi2second}
P\left(\exists t\in [0,T] :   v(t,x)>\underline F(x-{\rm v} t){\rm ~for~some~}x\in{\bf R}
\right)\le 1/8.
\end{equation}
$ii.$  
\label{lem:1}
Suppose that  $u$ satisfies $0\le  u(0,x)\le \underline{F}(x)$ from (\ref{underlineF}) and
\begin{eqnarray}\label{up}
\partial_t  u\le  \partial_x^2  u +  u+ 
\epsilon \sigma(t,x) \dot{W}
\end{eqnarray}with 
\begin{equation}
\label{620a}
\sigma(t,x)^2\leq 3 \bar F(x-{\rm v} t -L).\end{equation} 
Suppose  $M$, $\gamma$ are as in (\ref{eeeem}), (\ref{epar2}).  Then there is a $C_{(\ref{620first})}<\infty$ such that  \begin{eqnarray}
\label{620first}
P\big(\sup_{0\leq t \leq 3, ~ -1\le x-{\rm v} t \le 1} u(t,x) >
u^*/10\big)\leq C_{(\ref{620first})}\gamma 
\end{eqnarray}
\end{lemma}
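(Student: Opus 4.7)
The plan is to prove both parts by extending the argument of Lemma~\ref{lem1}, with a more careful treatment of the strip ${\rm v} t - M \le x \le {\rm v} t$.

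For part $i$, I would split into the two regions where $\underline{F}$ takes different forms. For $x - {\rm v} t \le -M$, where $\underline{F}(x - {\rm v} t) = \bar F(x - {\rm v} t) + 3\lambda e^{-\lambda(x-{\rm v} t)}$, the bound follows immediately from Lemma~\ref{lem1} combined with $\bar\varrho(t,x) \le \bar F(x - {\rm v} t)$, at the cost of $1/16$ in probability. In the strip $x - {\rm v} t \in (-M, 0)$, where the target is the constant $u^*/40$, I would sharpen the derivation leading to (\ref{236second}): since $\bar f(u) = u$ on $[0,1]$, one has $\bar f(\bar v) - \bar f(\bar\varrho) = \bar v - \bar\varrho$ exactly whenever $\bar v, \bar\varrho \in [0,1]$ (which one can enforce by a stopping argument), removing the ``$+2$'' inhomogeneous contribution and giving $\bar v - \bar\varrho \le \epsilon Z$ in the strip. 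Using the explicit form (\ref{F})--(\ref{xlin}), $\bar F(-M) \simeq \nu M e^{M} = \gamma \epsilon^2 M e^M$, which is at most $u^*/80$ for $\epsilon$ small with $M = M(u^*)$ a fixed constant. Combining with (\ref{oldset}) of Lemma~\ref{lemma9a}, which yields $|\epsilon Z| \le u^*/160$ uniformly on the strip with probability at least $1 - 1/32$, we obtain $v \le \bar\varrho + \epsilon Z \le u^*/80 + u^*/160 \le u^*/40$ in the strip. A union bound gives failure probability at most $1/16 + 1/32 < 1/8$.

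For part $ii$, by the comparison principle $u$ is dominated by the solution $U$ of the equality version of (\ref{up}). Via the integrating factor $e^{t}$ we decompose $U = D + \epsilon Z$, where $D(t,x) = e^{t}\int G(t, y-x) \underline{F}(y)\,dy$ and $Z$ is the stochastic integral of $e^{t-s} G(t-s, y-x) \sigma(s,y)$ against $\dot W$. For the deterministic part, I would bound $D \le u^*/20$ on the strip $\{t \in [0,3],\ x - {\rm v} t \in [-1,1]\}$ by splitting into contributions from $y \in (-M,0)$ (where $\underline{F} = u^*/40$) and $y \le -M$ (where $\underline{F} = \bar F + 3\lambda e^{-\lambda y}$), using Gaussian decay of the heat kernel to suppress the growing tail; this is where the freedom to take $M = M(u^*)$ large enough is used. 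For the stochastic part, the hypothesis $\sigma^2 \le 3\bar F(x - {\rm v} t - L)$ together with Lemma~\ref{variance-lemma} gives a uniform bound $\mathcal{B} \le C$ on the strip, after which Lemma~\ref{ldlemma1} applied with $\rambda$ of order $\epsilon^{-1} u^*$ yields $|\epsilon Z| \le u^*/20$ on the strip with probability at least $1 - C\gamma$.

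The main obstacle is the deterministic bound on $D$ in part $ii$: the linear growth term $+u$ combined with the exponentially growing tail $3\lambda e^{-\lambda y}$ of $\underline{F}$ at $y \to -\infty$ would naively produce a large contribution in the strip, since the linearized equation $\partial_t U = \partial_x^2 U + U$ admits $e^{(1+\lambda^2)t - \lambda x}$ as a solution with $1 + \lambda^2 - \lambda {\rm v} > 0$. The resolution is that on the bounded time interval $[0,3]$, for $y \le -M$ with $M$ sufficiently large depending on $u^*$, $\lambda$, and ${\rm v}$, the Gaussian factor $e^{-(y-x)^2/4t}$ dominates this growth, so only the localized part of $\underline{F}$ near the strip contributes appreciably. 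This is precisely the role of the choice $M = M(u^*)$ in (\ref{eeeem}).
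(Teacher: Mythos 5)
Your part $ii.$ follows the paper's own route (dominate $u$ by the mild solution with integrating factor $e^t$, bound the deterministic convolution of $\underline F$ by choosing $M=M(u^*)$ large so that the Gaussian kernel kills the far tail on the bounded time interval $[0,3]$, and dispose of the stochastic convolution by a large deviation estimate on an order-one time window using (\ref{620a})), so I have no objection there. The problem is part $i.$, specifically your treatment of the strip $x-{\rm v}t\in(-M,0]$.

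Your claim that one can ``enforce $\bar v,\bar\varrho\in[0,1]$ by a stopping argument'' and thereby replace (\ref{eq:1}) by the exact identity $\bar f(\bar v)-\bar f(\bar\varrho)=\bar v-\bar\varrho$ does not survive inspection: $\bar\varrho(t,x)=\bar F(x-{\rm v}t)$ tends to $2$ as $x\to-\infty$, so $\bar\varrho$ exceeds $1$ on a half-line already at $t=0$ and any such stopping time is identically zero. The obstruction is not merely cosmetic. The ``$+2$'' in (\ref{236second}) arises from the crossover region where one of $\bar v,\bar\varrho$ is above $1$ and the other below, and even though that region lies at $x-{\rm v}t\approx-\Theta\sim-2|\log\epsilon|$, far from the strip, the equation for the difference is solved against the heat kernel over the full horizon $T=\lfloor|\log\epsilon|^4\rfloor$, so a source active outside the strip still contributes inside it; killing that contribution would require first locating where $\bar v>1$ (itself a bootstrap through Lemma \ref{lem1}) and then a quantitative heat-kernel localization with the damping $e^{-(t-s)}$. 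You supply neither. The paper avoids this entirely by a different mechanism in the strip: from (\ref{vlew}) one has the first-moment bound $E[v(t,x)]\le\bar F(x-{\rm v}t)\le\nu Me^{M}=O(\gamma\epsilon^2)$, so Markov's inequality gives $P(v(p)>u^*/40)\le 40Me^{M}\nu/u^*$ at each point $p$ of an $a\varepsilon$-fine space--time grid in $\Gamma$; a union bound over the $O(MT\varepsilon^{-2}a^{-2})$ grid points costs only $1/32$ because the $\nu$ and $\varepsilon^{-2}$ cancel, and one then interpolates to all of $\Gamma$ with a modulus-of-continuity estimate (the deterministic increments $\Omega_1,\Omega_2,\Omega_3$ plus (\ref{oldset}) for the stochastic increment). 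You should either adopt that first-moment/grid/continuity argument or make the localization of the nonlinear source precise; as written, the step ``$\bar v-\bar\varrho\le\epsilon Z$ in the strip'' is unjustified.
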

Let us make a few remarks before we start the proof of the lemma. Note that part (i) of the lemma is only stronger than Lemma \ref{lem1} in the region $x-{\rm v} t \in (-M, 0]$ where $\bar\varrho(t,x)
 + 3{\lambda}e^{-{\lambda}(x-{\rm v} t)}\sim 3$. We need this to be able to control $\sigma^2(v)$ from below 
 in the regions where $v$ is small, that is in the region $x-{\rm v} t \in (-M, 0]$. If we will show that with 
 high probability $v$ is small in that region, then we will be able to use (\ref{sigmacond}) to control $\sigma^2(v)$ 
 from below there. Another remark deals with coefficient $3$ in~(\ref{620a}). This coefficient appears in~(\ref{816}) 
 after which we use Lemma~\ref{lemma4}.

 \begin{proof}      $i.$ 
Note also that by
 the same argument as that at (\ref{bdonsig}) we can assume that
 \begin{equation}\label{newbdons}
 \sigma^2(v(t,x)) \le \underline F(x-{\rm v} t).
 \end{equation}
 Let $\mathcal N$ be the vertices of an affine lattice in $$\Gamma= \{(t,x)~:~ 
0\le t\le T, ~{\rm v} t-M\le x\le {\rm v} t\}$$ with edge length $a\varepsilon$ between nearest neighbour vertices.  From (\ref{vlew}) we have
that for $(x,t)\in \Gamma$ and therefore for $p\in\mathcal{N}$,
\begin{equation}
E\left[{v}(p)\right] \leq \varrho \le \nu Me^M.
\end{equation}
By Markov's inequality,
\begin{equation}
\label{Markov-est}
P({v}(p)> u^*/40) \leq 
{40Me^M\nu}/{u^*}.
\end{equation}
We can estimate \begin{eqnarray}
 P\left(\sup_{p\in\mathcal{N}}{v}(p)>u^*/40\right) 
   \le \sum_{p\in\mathcal{N}}P\left({v}(p)>u^*/40\right) 
\end{eqnarray}
and since $|\Gamma|\le 2 MT\varepsilon^{-2}a^{-2}$, if 
$
a\ge 80 M^{1/2} e^{M/2} (u^*)^{-1/2}$
 then \begin{equation}
\label{Markov-est-a}
 P\left(\sup_{p\in\mathcal{N}}{v}(p)>u^*/40\right) 
   \le 1/32.
\end{equation}
Hence to prove the lemma it suffices to show that if
$
a \le C_{(\ref{64})}^{-1}\epsilon^{-1}u^*
$ then
\begin{equation} P\left(\sup_{(t,x), (t',x')\in \Gamma
\atop |x'-x|+ |t'-t|\le a\varepsilon} |v(t',x')-v(t,x)| \ge u^*/40\right) \le 1/16.
\end{equation}
Divide  $\Gamma$ into $T$ intervals of length $1$, $\Gamma^i=\Gamma\cap \{ i\le t\le i+1\}$.
On $\{ i\le t\le i+1\}$,  ${v}$ is the solution to\begin{equation}
{v}(t,x)   =  \int G_{ \rm v}(i,y, t,x) {v}(i,y)dy + 
\int \int_{i}^t G_{ \rm v}(s,y, t,x)  f( {v}(s,y)) ds dy + \epsilon Z(t,x).
\end{equation} where
\begin{equation}
Z(t,x)= 
\int \int_{i}^t G_{\rm v}(s,y, t,x) \sigma( {v},s,y) W_1(ds dy).
\end{equation}
From (\ref{newbdons}), assuming $t'\ge t$,
\begin{equation} 
|{v}(t',x')-{v}(t,x)| \le \Omega_1+\Omega_2 + \Omega_3 + \epsilon | Z(t',x')|+ \epsilon |Z(t,x)|.
\end{equation} where \begin{eqnarray}
\Omega_1 & = &  3\int |G_{ \rm v}(i,y, t',x')-G_{ \rm v}(i,y, t,x)|
 e^{-{\lambda}( y-{\rm v} i)} dy,\\
 \Omega_2 & = &  3\int\int_i^{t_1} |G_{ \rm v}(s,y, t',x')-G_{ \rm v}(s,y, t,x)|
 e^{-{\lambda}( y-{\rm v} s)} dsdy,\\ 
 \Omega_3 & = &  3\int\int_{t}^{t'} G_{ \rm v}(s,y, t',x')
 e^{-{\lambda}( y-{\rm v} s)} dsdy. \end{eqnarray}
   So the result follows from (\ref{oldset}) and the elementary fact that
 there exists $c<\infty$ such that if  $|t-t'| + |x-x'|\le cu^*$,  $\lambda,{\rm v}\in [1,2]$ then 
 \begin{equation}\label{64}
 \Omega_1+\Omega_2 + \Omega_3 \le u^*/80.
 \end{equation}

$ii.$  From (\ref{up}), \begin{equation}u(t,x) \le e^{t}\int   G(0,y,t,x) \underline{F}(y) dy + \epsilon e^t Z(t,x)
\end{equation} 
where
\begin{equation} Z(t,x) = \int\int_0^t e^{-s} G(s,y,t,x) 
\sigma(s,y) W(dsdy)\end{equation} 
From the definition of $\underline{F}$ it is clear that we can choose $M$ so that for all $0\le t\le 3$ and $-1\le x-{\rm v} t \le 1$, 
\begin{equation}
\int  e^t G(0,y,t,x) \underline{F}(y) dy < u^*/20.
\end{equation} so the result follows from the following   large deviation estimate
whose proof is elementary as it only has to hold on time intervals of order $1$:
There exists a $C_{(\ref{620first})}<\infty $ such that for $\gamma$ is as in (\ref{epar2}),
\begin{eqnarray}
P\big(\sup_{0\leq t \leq 3, ~ -1\le x-{\rm v} t \le 1} \epsilon e^t |Z(t,x)| >
u^*/20\big)\leq C_{(\ref{620first})}\gamma.
\end{eqnarray}
\end{proof}

%

\section{The critical mass}
\label{outline}
\setcounter{equation}{0}



%

The  following
elementary computation identifies the critical mass for survival.
\begin{lemma}
\label{lem:l4}
Let $\dot W$ be a white noise and  
$w(t,x)$ be a positive solution of 
\begin{equation}\label{frst}
\partial_t w = \partial_x^2 w + b w+ \vartheta\sqrt{w}\dot W, \quad w(0,x)=w_0
\end{equation}
where $\vartheta$ is adapted with $\vartheta\ge \vartheta_0$ for some nonrandom
$\vartheta_0>0$.
Then 
\begin{eqnarray}
P(w(t)\equiv 0)\geq 1- e^{bt}\vartheta_0^{-2} t^{-1} E[\int w_0(x) dx]
\end{eqnarray}
\end{lemma}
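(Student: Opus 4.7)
The plan is to reduce the statement to a one-dimensional calculation about the total mass $m(t)=\int w(t,x)\,dx$ and then run the log-Laplace argument that gives Feller's extinction probability. Integrating~(\ref{frst}) over $x$ yields
\begin{equation*}
m(t)=m(0)+b\int_0^t m(s)\,ds+M_t,
\end{equation*}
where $M_t=\int_0^t\!\!\int\vartheta(s,y)\sqrt{w(s,y)}\,W(ds\,dy)$ is a continuous local martingale whose quadratic variation satisfies $\langle M\rangle_t=\int_0^t\!\!\int\vartheta^2 w\,dy\,ds\ge\vartheta_0^2\int_0^t m(s)\,ds$. Since $w\ge 0$, the event $\{w(t)\equiv 0\}$ coincides with $\{m(t)=0\}$, so it suffices to upper-bound $P(m(t)>0)$.

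Next I would apply It\^o's formula to $\Psi(s,x)=\exp(-\lambda x/g(s))$, where $g$ solves the linear ODE $g'(s)=bg(s)-\lambda\vartheta_0^2/2$ with $g(0)=1$, explicitly
\begin{equation*}
g(s)=e^{bs}-\frac{\lambda\vartheta_0^2}{2b}(e^{bs}-1).
\end{equation*}
A direct drift calculation, combined with the quadratic variation lower bound, shows that the $ds$-part of $d\Psi(s,m(s))$ is at least
\begin{equation*}
\Psi(s,m(s))\,\frac{\lambda m(s)}{g(s)^2}\bigl[g'(s)-bg(s)+\tfrac{\lambda\vartheta_0^2}{2}\bigr]\,ds=0
\end{equation*}
by the choice of $g$. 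Thus on any interval where $g$ stays positive $\Psi(s,m(s))$ is a local submartingale; a stopping-time localisation at $\tau_n=\inf\{s:m(s)\ge n\}$, combined with $0\le\Psi\le 1$, upgrades this to a true submartingale. For $\lambda<\lambda^{*}:=2b/(\vartheta_0^2(1-e^{-bt}))$, $g$ is positive throughout $[0,t]$ and one concludes
\begin{equation*}
E\bigl[\exp\bigl(-(\lambda/g(t))\,m(t)\bigr)\bigr]\ge E[e^{-\lambda m_0}].
\end{equation*}

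Setting $\mu=\lambda/g(t)$ and letting $\lambda\uparrow\lambda^{*}$, so that $g(t)\downarrow 0$ and $\mu\uparrow\infty$, bounded convergence gives $P(m(t)=0)\ge E[e^{-\lambda^{*}m_0}]\ge 1-\lambda^{*}E[m_0]$ via $e^{-x}\ge 1-x$. Finally $\lambda^{*}=2be^{bt}/(\vartheta_0^2(e^{bt}-1))\le 2e^{bt}/(\vartheta_0^2 t)$ using $e^{bt}-1\ge bt$, which produces the stated bound up to a harmless constant. The main obstacle is the rigorous It\^o step: since~(\ref{frst}) is an SPDE and $M$ is only a continuous local martingale, one has to be careful with localisation, both in value via $\tau_n$ and, if necessary, in space via cutoffs $|x|\le R$ with $R\to\infty$, and then check that the quadratic variation lower bound $\langle M\rangle_t\ge\vartheta_0^2\int_0^t m(s)\,ds$ survives the limit; once this is in place, the rest is the purely probabilistic computation sketched above.
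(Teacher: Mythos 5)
Your argument is essentially the paper's own proof in different clothing: the paper works with $\exp\{-\int\phi(t-s,x)w(s,x)\,dx\}$ where $\phi$ is the spatially constant solution of the Riccati equation $\dot\phi=-\vartheta_0^2\phi^2$, which is exactly your exponential submartingale of the total mass with $\phi(t-s)=\lambda/g(s)$; the reduction to $b=0$ versus carrying $b$ through the ODE for $g$ is immaterial. Your computation is correct (and the localisation issues you flag are the standard ones: $\Psi\in[0,1]$ makes the stopped submartingale inequality pass to the limit, and finiteness of $E[m(t)]\le e^{bt}E[m_0]$ handles admissibility of the test function $\phi\equiv 1$).

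The one point worth recording is the factor of $2$. Your proof honestly yields $P(w(t)\equiv 0)\ge 1-2e^{bt}\vartheta_0^{-2}t^{-1}E[\int w_0]$, and this factor is not removable: when $\vartheta\equiv\vartheta_0$ the total mass is exactly a Feller diffusion with extinction probability $\exp\{-2m_0/(\vartheta_0^2 t)\}$, which for small $m_0/(\vartheta_0^2 t)$ is strictly below $1-m_0/(\vartheta_0^2 t)$. So the constant in the lemma as stated is off by a factor of $2$; the paper's own derivation loses the It\^o factor $\tfrac12$ (its equation for $\phi$ carries $-\vartheta_0^2\phi^2$ where the submartingale property under the hypothesis $\vartheta\ge\vartheta_0$ requires $-\tfrac12\vartheta_0^2\phi^2$). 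This is harmless where the lemma is used, since only the order $\epsilon^{-2}E[\int w_0]$ matters there.
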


\begin{proof} By considering $\tilde w=e^{-bt} w$ we can assume without loss of generality that $b=0$.  
If 
\begin{equation}\label{scnd}
\partial_t\phi =\partial_x^2  \phi - \vartheta_0^2 \phi^2
\end{equation}
then
\begin{eqnarray}&
\exp\{ -\int \phi(t-s,x), w(s,x)dx\rangle\}
&\end{eqnarray}
is a supermartingale in the $s$ variable on $[0,t]$. 
The solution of (\ref{scnd}) with $\phi(0,x)=n$ is 
$
\phi(t,x) = (\vartheta_0^2 t + n^{-1} )^{-1}
$
 and hence
\begin{eqnarray}&
E[ \exp\{ -n\int w(t,x)dx\}~|~\mathcal F_0] = \exp\{ - (\vartheta_0^2 t + n^{-1} )^{-1}\int w(0,x)dx\}.&
\end{eqnarray}
Taking $n\rightarrow\infty$  we get 
\begin{eqnarray}
\label{equt:4} &
P(\int  w(t,x)dx =0) 
 =  E[ \exp\{-\vartheta_0^{-2}t^{-1}\int w(0,x)dx\}]
& \end{eqnarray}
and the lemma follows from $e^{-x} \ge 1-x$.  \end{proof}

The next  lemma is needed to control the support of such
a $w$ in short time intervals, in terms of the immigration.  Note that
we will only have to have reasonable control, and the actual scale
are not critical here, as it is in the previous lemma.  

\begin{lemma}
\label{lemmaB1}
Let $W$ be a white noise and $w$ be a solution of  
\begin{equation}\label{spud}
\partial_t w = \partial_x^2 w + b w + \vartheta\sqrt{w} \dot W+ d\mu, \qquad
0\le t\le 1
\end{equation}
with $w(0,x)\equiv 0$ and let $\psi$ be the hitting time of $(-r,r)^c$;
\begin{equation}
\psi = \inf\{ t\ge 0~:~ {\rm supp}( w(t)) \cap (-r,r)^c\neq 0\}.
\end{equation}  Suppose that $\mu$ is a positive adapted measure on $[0,1]\times \mathbf R$ with 
support in  $[0,1]\times (-r/2,r/2) $, and
 $\vartheta$ is adapted and $\vartheta\ge \vartheta_0>0$. 
Then, letting $M=\int\int_0^1 \mu(dt dx)$,
\begin{eqnarray}&
P(\psi >1) \ge  1- 100r^{-2}e^{b}\vartheta_0^{-2}E[ M].
&\end{eqnarray}
\end{lemma}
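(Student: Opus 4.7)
The plan is to adapt the log-Laplace / supermartingale method of Lemma~\ref{lem:l4}, now using a spatially localized test function that encodes the support-escape event. First I would reduce to $b=0$ by the rescaling $\tilde w(t,x) = e^{-bt}w(t,x)$, which preserves the event $\{\psi>1\}$ and alters the noise coefficient into $e^{-bt/2}\vartheta$, still bounded below by a constant multiple of $\vartheta_0$, and the immigration into $e^{-bt}\mu$ of total mass $\le M$; these modifications produce the $e^b$ prefactor in the final estimate. Working with $b=0$, I would choose a time-independent test function $\phi(x)\ge 0$ on $(-r,r)$ with $\phi(\pm r)=+\infty$ satisfying the log-Laplace supersolution inequality $\phi''\le (\vartheta_0^2/2)\phi^2$. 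An explicit candidate is $\phi(x) = C_0 r^2/(\vartheta_0^2 (r^2-x^2)^2)$; a direct computation with $\phi'' = C_0 r^2\vartheta_0^{-2}[4(r^2-x^2)^{-3}+24x^2(r^2-x^2)^{-4}]$ shows the supersolution inequality reduces to $4r^2+20x^2 \le (C_0/2)r^2$, so $C_0=48$ suffices. Moreover $\phi(y) \le C_1/(\vartheta_0^2 r^2)$ uniformly for $|y|\le r/2$, for an explicit constant $C_1$.

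With this $\phi$, set
\[
M_t = \exp\Bigl(-\!\int\phi(x)\,w(t,x)\,dx \;+\; \int_0^t\!\!\int \phi(x)\,d\mu(s,x)\Bigr).
\]
The key It\^o calculation, carried out on a localizing sequence $\tau_n\uparrow\psi$ on which $\int\phi w\,dx$ is finite (since $w$ is supported strictly inside $(-r,r)$ there), uses integration by parts in $x$ together with $\vartheta\ge\vartheta_0$ and the supersolution property to give
\[
dM_t = M_t\bigl[\tfrac12(\vartheta^2-\vartheta_0^2)\!\int\!\phi^2 w\,dx\,dt \;-\; \!\int\!\phi\,\vartheta\sqrt{w}\,dW\bigr],
\]
so $M$ is a local submartingale with $M_0=1$. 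Optional stopping at $\tilde\tau = \psi\wedge 1$ then gives $E[M_{\tilde\tau}]\ge 1$. On $\{\psi\le 1\}$ the continuity of $w$ together with its H\"older-$\tfrac12$ spatial regularity and the $(r-|x|)^{-2}$ blowup of $\phi$ force $\int\phi(x)w(\psi,x)\,dx = +\infty$, so $M_\psi = 0$; hence $E[M_1\,\mathbf{1}_{\psi>1}]\ge 1$.

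Since on $\{\psi>1\}$ we have $M_1\le e^I$ with $I = \int_0^1\!\int\phi(y)\,d\mu(s,y)\le C_1 M/(\vartheta_0^2 r^2)$, the bound $1\le E[e^I\mathbf{1}_{\psi>1}]$ combined with $e^I-1\le Ie^I$ gives $1-P(\psi>1)\le E[Ie^I]$. The conclusion is non-trivial only when $E[M]\le\vartheta_0^2 r^2/100$, and a routine truncation at $M=\vartheta_0^2 r^2/C_1$ (the event $M>\vartheta_0^2 r^2/C_1$ is itself controlled by Markov's inequality, at the cost of at most a factor of $2$ in the constant) ensures $I\le 1$ a.s., hence $e^I\le e$, producing the required linear bound $1-P(\psi>1)\le 100\,r^{-2}\vartheta_0^{-2}e^b\,E[M]$. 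The main obstacle will be the rigorous justification of the It\^o formula and optional stopping with the singular $\phi$, namely the choice of the localizing sequence and verifying $M_\psi=0$ on $\{\psi\le 1\}$ from the interplay of boundary blowup of $\phi$ and H\"older regularity of $w$; the supersolution computation and the exponential truncation are then routine.
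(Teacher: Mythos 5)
Your core strategy is the same as the paper's: an exponential (log-Laplace) functional built from a test function that blows up at the boundary and satisfies the supersolution inequality $\phi''\le\tfrac{\vartheta_0^2}{2}\phi^2$, the sub/supermartingale property, optional stopping at $\psi\wedge 1$, and the observation that the functional vanishes at the exit time. (The paper uses the one-sided function $g_\delta(x)=12(x+r+\delta)^{-2}$, for which $g_\delta''=g_\delta^2/2$ exactly, and doubles the estimate by symmetry; your two-sided $\phi$ is a cosmetic variant. The delicate point that $\int\phi\,w(\psi,\cdot)\,dx=+\infty$ on $\{\psi\le 1\}$ is handled in the paper only through the $\liminf_{\delta\to 0}$ regularization, so your flagging of it is fair and not a defect relative to the paper.)

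The genuine gap is in your treatment of the \emph{adapted, random} immigration measure, and it is quantitative but real: your final bound does not yield the constant $100$. From $1\le E[e^{I}\mathbf 1_{\psi>1}]$ you pass to $1-P(\psi>1)\le E[Ie^{I}]$, and even after the truncation that forces $I\le 1$ you pick up a factor $e$ on top of $C_1=768/9\approx 85.3$, plus the Markov-inequality cost $C_1\vartheta_0^{-2}r^{-2}E[M]$ of the truncation event itself; the total is roughly $(1+e)\cdot 85\approx 317$, and no choice of truncation level $\eta$ (minimizing $e^{\eta}+\eta^{-1}$) brings it below $100$ with your $C_1$. The paper avoids the exponential moment entirely: it first proves the bound for $\mu\in\mathcal F_0$, where $\exp\{\int g_\delta\,d\mu\}$ can be pulled out of the conditional expectation, obtaining $P(\psi>1\mid\mathcal F_0)\ge\exp\{-48r^{-2}M\}$; it then extends to general adapted $\mu$ by exploiting the additivity of equation (\ref{spud}) (the sum of two solutions with independent noises and immigrations $\mu_1,\mu_2$ solves the equation with $\mu_1+\mu_2$), doing a finite induction over atoms $\mu_n(dx)\delta_{t_n}(dt)$ with $\mu_n\in\mathcal F_{t_n}$, and passing to the limit. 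This keeps the estimate in the form $P(\psi>1)\ge E[\exp\{-48r^{-2}M\}]\ge 1-48r^{-2}E[M]$, hence $96<100$ after symmetrization. If you want to salvage your route you must either import this additivity/induction step or accept a larger (but still universal) constant — which would suffice for the paper's application, where the lemma only feeds an unspecified constant $C_{(\ref{equt:1})}$, but does not prove the lemma as stated. A second, smaller point: your displayed It\^o formula for $dM_t$ is stated as an equality but the drift should also contain the nonnegative term $\bigl(\tfrac{\vartheta_0^2}{2}\phi^2-\phi''\bigr)w$; and optional stopping for the local \emph{sub}martingale $M_{t\wedge\tau_n}$ requires uniform integrability (Fatou goes the wrong way), which again is only available after the truncation making $I$ bounded.
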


\begin{proof}  By considering $\tilde w = \vartheta_0^2e^{-bt} w$ we can assume without loss of generality that 
$b=0$ and $\vartheta_0=1$.  Furthermore, by symmetry it is enough to prove the result when $\psi$ is  
the hitting time of $(-\infty, -r]$ with a constant $50$ instead of $100$ on the right hand side.   First let us consider the case $\mu(t)\in \mathcal F_0$. Note that for any $\delta>0$
\begin{equation}
g_\delta(x) =   12(x+r+\delta)^{-2} 
\end{equation}
satisfies 
$
\partial_x^2g_\delta = g^2_\delta/2
$
for $x>-r-\delta/2$. Let  \begin{eqnarray}&
X_\delta(t) = \exp\left\{-\int g_\delta(x) w(t,x)dx+ 
 \int\int_0^t  g_\delta(x) \mu(dsdx)\right\}.&
\end{eqnarray}
Then $X_\delta(t\wedge \psi)$ 
is a submartingale.  In particular,
\begin{equation}
E[ X_\delta(1\wedge\psi)~|~\mathcal F_0] \ge E[X_\delta(0)~|~\mathcal F_0] =1.
\end{equation}
Let us assume temporarily that $\mu(t)\in \mathcal F_0$, $0\le t\le 1$.
Since $g\ge 0$, and $g\le 48 r^{-2}$ on ${\rm supp}(\mu)$ we have
$M_{g_\delta}(1\wedge\psi)\le M_{g_\delta}(1)\le 48r^{-2}M$,
\begin{eqnarray}
\label{639} 
&
E\left[ \exp\left\{-\int g_\delta(x) w(1\wedge\psi,x)dx\right\}~|~\mathcal F_0\right] 
 \ge \exp\left\{ - 48r^{-2}M\right\}. &
\end{eqnarray}
Note that
\begin{eqnarray}
\label{640} 
& P(\psi>1) \ge \liminf_{\delta\to 0} E\left[ \exp\left\{-
\int g_\delta(x) w(1\wedge\psi,x)dx\right\}~|~\mathcal F_0\right]
&
\end{eqnarray}
which proves the lemma when $\mu(t)\in \mathcal F_0$.

For the general case, note first that (\ref{spud}) has the property that if
$w_1$ and $w_2$ are two solutions with  measures $\mu_1$ and $\mu_2$
and independent white noises $W_1$ and $W_2$,
then $w_1+w_2$ is a solution with measure $\mu_1+\mu_2$. 

We construct a probability space on which we have this setup with 
adapted $\vartheta$ and $\mu_i(dx,dt) = \mu_i(dx)\delta_{t_i}(dt)$. 
Let $\psi_1$ and $\psi_2$ be the corresponding hitting times of $(-\infty,-r]$.
From (\ref{639}) and (\ref{640}), conditioning on $\mathcal F_{t_i}$ instead of 
$\mathcal F_0$ we have $P(\psi_i>T) \ge \exp\{ -48r^{-2} M_i\}$ for $i=1,2$ 
with $M_i=\int \mu_i(dx)$ and hence 
$P(\psi_1\wedge \psi_2>1) \ge E[\exp\{ -48r^{-2} (M_1+M_2)\}]$.  

A finite induction then gives the result
for $\mu(dtdx) =\sum_{n=0}^N \mu_n(dx) \delta_{t_n} (dt)$
with $\mu_n(dx)\in\mathcal F_{t_n}$.  We can then take limits
to obtain the result for all adapted positive measures $\mu$.
\end{proof}
%


 \section{Proof of Lemma \ref{lemma3a}}
\label{killing}
\setcounter{equation}{0}
\newcommand{\gdm}{\hfill\vrule  height5pt width5pt \vspace{.1in}}
\newcommand{\bk}{{\rm v}}  
\newcommand{\tv}{\tilde{v}}

   We will solve (\ref{e30}) iteratively, on short time intervals of length
$1$ and show that we can kill the mass of $w$ on each interval separately.
The reason to do this is that the noise in (\ref{e30}), which is needed to kill the mass,
is only of the correct order near $x={\rm v} t$ where $v$ is relatively
small.  So one has to show that the mass vanishes quickly, before the front moves ahead, and the
noise is no longer available.  We will do all our bounds on the event 
$\{v(t, x) \le \underline{F}(t-{\rm v} t)\}$ where $\underline F$ is defined in (\ref{underlineF}). To be more precise, define
 \begin{equation}\label{tvdef}
\tau_v=\inf\{t\ge0~:~ v(t, x) \ge \underline{F}(x-\bk t) {\rm ~ for ~some~ }  ~x\in\mathbf R\}.
 \end{equation}
Let
\[ \tv(t,\cdot)= v(t\wedge \tau_v\,, \cdot).\]

Let $W_{2,k}\,, \;k=1,2,\ldots$ be a sequence of independent 
white noises which are also independent of $W_1$. We construct a sequence of processes $\bar{w}_k$,  $k=1,2,\ldots$ by solving\begin{equation}\left\{\begin{array}{ll} \partial_t \bar{w}_{k}=\partial_x^2 \bar{w}_k + \|f\|_{\rm Lip}\bar{w}_k + \epsilon\sigma_k   \dot{W}_{2,k}  + \delta_{x-\bk t} \dot A_k  & k-1 < t\le k+1,\\
\bar{w}_k(t,\cdot)=0                            & t= k-1,
                                     \end{array}\right.
  \end{equation}     
  and setting $ \bar{w}_k(t,\cdot)=0$ for $t\in (k-1,k+1]^c$.                             
Here
\begin{equation}\sigma_k^2 =
 |\sigma^2(\tv+\bar{w}_k+\bar{w}_{k-1})-\sigma^2 (\tv+\bar{w}_{k-1})|\vee a^*  \bar{w}_k
\end{equation}
and
\begin{equation}
\dot A_k = \dot A 1_{k-1<t\le k}
\end{equation}
is the creation term acting only on the first half of each time interval.
To start things going we use the convention that $\bar{w}_{-1}=\bar{w}_0\equiv 0$.
Define stopping times
\begin{eqnarray}
\mathcal \tau_{k,1} & = &\inf\{t\in (k-1,k+1]:  {\rm supp}\{\bar{w}_k(t)\}\not\subset (\bk (k-1) -1,\bk k+1)\}, \nonumber
\\\nonumber 
\mathcal \tau_{k,2} & = &\inf \{ t\in (k-2,k+1]: \bar{w}_k(t,x)+\bar{w}_{k-1}(t,x)+\tv(t,x)> u^*/10\\\nonumber && \qquad\qquad\qquad\qquad \qquad {\rm for ~some ~~} x\in (\bk (k-2) -1,\bk k+1)\} ,
\\\nonumber \mathcal \tau_{k,3} & = &\left\{\begin{array}{lll} \infty, &&{\rm if}\  \bar{w}_k (k+1 , \cdot) \equiv 0,\\
                                     k+1,&&{\rm otherwise}.
                                     \end{array}\right.
 \end{eqnarray}
with the convention that the infimum is infinite if the set is empty, and ${\rm supp}\{w\}=\{x: w(x)>0\}$ is the support of a non-negative function $w$.

Let $\tau= \tau_{k,i}$ be the smallest $\tau_{k,i}<T$, if there is one.  Otherwise let $\tau=T$. Note that up to time   $ \tau\wedge (k+1)$ 
we have
\[ \sigma_k^2 =\sigma^2(\tv+\bar{w}_k+\bar{w}_{k-1})-\sigma^2 (\tv+\bar{w}_{k-1}).\]
Hence, we can find a probability space on which there are white noises $W_2$ and 
$\{W_{2,i}\,, \;i=1,2\}$ such that the solution $\bar w$ of (\ref{e30a}) can be
represented as
\begin{equation}
\label{equt:3}
 \bar{w}(t,x)1(t\le \tau\wedge \tau_v)= 
 \sum_{k=1}^{\infty} \bar{w}_{k}(t,x)
 1( k-1 < t\le (k+1) \wedge \tau\wedge \tau_v ). 
\end{equation}
For each $k=1,\ldots,T$ let
\begin{equation}
\dirset_k = \{ \tau_{k,1}>k+1\}\cap \{  \tau_{k,2}> k+1\}\cap  \{
\tau_{k,3}> k+1\}
\end{equation}
Recall $\gamma$ from (\ref{epar2}).  We claim that
\label{lem:2}  \begin{equation}\label{712}
P( \cap_{k=1}^{T} \dirset_k )\geq 
1-c_0\gamma T.
\end{equation}
This implies Lemma \ref{lemma3a}, for on  $\cap_{k=1}^{T}\dirset_k\cap \{ \tau_v>T\}$,
\begin{equation} \bar{w}(t,x)= 
 \sum_{k=1}^{\infty} \bar{w}_{k}(t,x)
 1( k-1 < t\le k+1 ), \qquad t\leq T.
 \end{equation}   Hence the left hand side of (\ref{254aa}) is bounded above by 
\[ 1- P\left(\cap_{k=1}^T\dirset_k \cap \{ \tau_v>T\}\right)\leq c_0\gamma T+1/8,\] 
since $P(\tau_v<T)<1/8$ by Lemma \ref{lemma4}, and we are done. 

The rest of the section will be devoted to verifying~(\ref{712}). 
\
Clearly 
it suffices to prove that for each $k=1,\ldots, T$ and $i=1,2,3$,
\begin{equation}\label{8.4}
P(\tau_{k,i}\leq k+1 )\leq c_0\gamma.
\end{equation}
By Lemma \ref{lem:l4}, applied to $\bar{w}_k$ on the interval $(k, k+1]$ where there is
no creation acting on $\bar{w}_k$,
\begin{equation}\label{810}
P(\tau_{k,3}\leq k+1)
 \le C_{(\ref{810})}\epsilon^{-2}  E[ A(k) - A(k-1)] \le
 2C_{(\ref{810})}\gamma,
\end{equation}
where$C_{(\ref{810})} = e^{\|f\|_{\rm Lip}}(a^*)^{-1}$.
In the last inequality we used~(\ref{bdnearkt}). 
By the same reasoning, but using Lemma \ref{lemmaB1}
instead of Lemma~\ref{lem:l4}, there is a $C_{(\ref{equt:1})}<\infty$ such that 
\begin{equation}
\label{equt:1}
P( \tau_{k,1}\le k+1) \le  C_{(\ref{equt:1})}\gamma.
\end{equation}
It remains to prove (\ref{8.4}) for  $\tau_{k,2}$.  The rest of the proof is
devoted to this.
Define  
\begin{eqnarray}
u_k(t)=\bar{w}_k(t,x)+\bar{w}_{k-1}(t,x)+\tv(t,x), \quad t\in [k-2,k+1].
\end{eqnarray}
Given that $\bar{w}_k(k-1,\cdot)=\bar{w}_{k-2}(k-1,\cdot)=0$ there is
a white noise $\dot W_{(k)}$ such that
\begin{equation}
\partial_t u_{k}=\partial_x^2 u_k + (\bar{w}_k+\bar{w}_{k-1})+ f(\tv)+ 
 \epsilon\sigma_{(k)}   \dot{W}_{(k)}\,,
\end{equation}
where $\sigma_{(k)}^2 = \sigma_{k-1}^2  + \sigma^2_k   +  
 \sigma(\tv)^2$.
Since $f(\tv)\leq \tv$,  
\begin{equation}
\partial_t u_{k}\le \partial_x^2  u_k +  u_k+ 
 \epsilon\sigma_{(k)}\dot{W}_{(k)}\,. 
\end{equation}
The inequality is meant as holding for the corresponding integral
equation.
Now we claim that on $\{ \tau_{k,1}>k+1\} \cap \{\tau_{k-1,1}>k\}$, we have, for $t\le \tau_{k,2}$
\begin{equation}\label{816}
 \sigma_{(k)}^2(t,x)
 \leq  (\|\sigma^2\|_{\rm Lip}+ 2) \bar F(x-{\rm v} t-L). 
\end{equation}
For we know that there $\sigma^2(\tilde v(t,x))\le \tilde v(t,x) \le {\underline F}(x-{\rm v} t) \le \bar F(x-{\rm v} t-L)$.  Also $\sigma^2_k(t,x)\le \|\sigma^2\|_{\rm Lip}\bar w_k(t,x)\le \|\sigma^2\|_{\rm Lip}\frac{u^*}{10}
1_{\{(k-1,k+1]\times({\rm v}(k-1)-1,{\rm v} k+1)\}} \le \|\sigma^2\|_{\rm Lip}\bar F(x-{\rm v} t-L)$.

Note that \begin{equation}
\label{8.2} 
P(\tau_{k,2}\le k+1, \tau_{k,1}> k+1, \tau_{k-1,1}> k) 
\le P\Big(\sup_{k-2\leq t \leq k+1\atop {\rm v}(k-2)-1\leq x\leq \bk k +1)} 
 u_k(t,x) 
> u^*/10\Big)\,. 
\end{equation}
We want to estimate this with Lemma \ref{lem:1},  but we need (\ref{620a}), 
which does not necessarily hold.  But by the argument around (\ref{bdonsig}), 
in proving (\ref{620a}), we can assume without loss of generality that 
(\ref{816}) holds.  
In fact this is the place where it is clear the appearance of coefficient $3$ in (\ref{620a}). 
Hence we can apply Lemma \ref{lem:1} to obtain
\begin{eqnarray}
\label{8.2second}
P(\tau_{k,2}\le k+1, \tau_{k,1}> k+1, \tau_{k-1,1}> k)\le C_{(\ref{620first})}\gamma.
\end{eqnarray}
Finally, \begin{eqnarray}
\nonumber
P(\tau_{k,2}\le k+1)&\leq&P(\tau_{k,2}\le k+1, \tau_{k,1}> k+1, \tau_{k-1,1}> k)+ P(\tau_{k-1,1}\leq k)\\
\nonumber
&&\mbox{}+P(\tau_{k,1}\leq  k+1)\\
\nonumber
&\leq& (C_{(\ref{620first})}+ 2C_{(\ref{equt:1})})\gamma, 
\end{eqnarray}
which completes the proof of Lemma \ref{lemma3a}.

\section{Proof of Lemma \ref{lbd}}
\setcounter{equation}{0}
\label{lemma lbd} 

We need a preliminary result  of how a stochastic perturbation of a 
partial differential equation stays close to its deterministic version.  Here the interval 
is of order $1$, but the estimate needs to be precise.

\begin{lemma}  \label{lemma12}Suppose that $u$ and $\v$  are solutions of 
\begin{equation}
\label{rkpplower2}
\partial_t u 
= \partial_x^2 u  +f(u ) + \sigma(u)\dot{W} 
\end{equation} and \begin{equation}
\partial_t {\v} = \partial_x^2 { \v} +f(\v)  \end{equation}   
on $ |x|<L +{\rm v} t$, $0<t\le T$ with $
u (t,x) =\v(t,x)= 0$ on $ |x|\ge L+{\rm v} t$,  and  $u(0,x)=\rho(0,x)$. Suppose that $f$ is Lipschitz with constant $K$.
Then
\begin{equation}
|{u}(t,x)-{\v}(t,x)|\le  |\tilde Z(t,x)| + |Z(t,x)|. 
\end{equation}
where
\begin{equation}\label{ztx}
\tilde Z(t,x) =  K \int\int_0^t e^{K(t-s)} G_{{\rm v},L}(s,y,t,x) |Z(s,y) |dsdy
,
\end{equation} 
\begin{equation}\label{vtx}
Z(t,x) = \int\int_0^t G_{{\rm v},L}(s,y,t,x) \sigma(u(s,y) ){W} (dsdy).
\end{equation}\end{lemma}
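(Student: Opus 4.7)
The plan is to derive an integral inequality for $D(t,x) := |u(t,x) - \v(t,x)|$ from the mild formulations, and then to iterate it via the Chapman–Kolmogorov (semigroup) property of the killed heat kernel $G_{{\rm v},L}$.

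First I would write both $u$ and $\v$ in mild form against the killed heat kernel $G_{{\rm v}, L}$. Since both satisfy the same Dirichlet condition on $\{|x| \ge L + {\rm v} t\}$ and share the same initial data, the deterministic initial-data terms cancel on subtraction and we obtain
\begin{equation*}
u(t,x) - \v(t,x) = \int\!\!\int_0^t G_{{\rm v},L}(s,y,t,x)\bigl[f(u(s,y)) - f(\v(s,y))\bigr]\,ds\,dy + Z(t,x).
\end{equation*}
The Lipschitz bound on $f$ with constant $K$ then yields the Gronwall-type integral inequality
\begin{equation*}
D(t,x) \le |Z(t,x)| + K\int\!\!\int_0^t G_{{\rm v},L}(s,y,t,x) D(s,y)\,ds\,dy.
\end{equation*}

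Next I would iterate this inequality. Letting $\mathcal G g(t,x) := K\int\!\!\int_0^t G_{{\rm v},L}(s,y,t,x) g(s,y)\,ds\,dy$, a repeated substitution gives, for every $N \ge 0$,
\begin{equation*}
D(t,x) \le |Z(t,x)| + \sum_{n=1}^N \mathcal G^n |Z|(t,x) + \mathcal G^{N+1} D(t,x).
\end{equation*}
The key computation is the Chapman–Kolmogorov identity for the killed kernel, $\int G_{{\rm v},L}(s,y,u,z) G_{{\rm v},L}(u,z,t,x)\,dz = G_{{\rm v},L}(s,y,t,x)$, which follows from the strong Markov property of Brownian motion killed outside $\{|z| < L + {\rm v} u\}$. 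Using this and induction on $n$, one obtains
\begin{equation*}
\mathcal G^n |Z|(t,x) = K^n\int\!\!\int_0^t \frac{(t-s)^{n-1}}{(n-1)!}\, G_{{\rm v},L}(s,y,t,x)\,|Z(s,y)|\,ds\,dy,\qquad n \ge 1,
\end{equation*}
whose sum over $n \ge 1$ telescopes, via the exponential series, to exactly $\tilde Z(t,x)$ as defined in \eqref{ztx}.

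It remains to kill the remainder $\mathcal G^{N+1} D$ as $N \to \infty$. Since $G_{{\rm v},L}$ is a sub-probability kernel, one iteration of the identity above gives $\mathcal G^{N+1} D(t,x) \le \|D\|_{L^\infty([0,T]\times \mathbf R)}\cdot (Kt)^{N+1}/(N+1)!$, which vanishes provided $D$ is a.s. bounded on $[0,T]\times\mathbf R$. This is the only real obstacle: one must know a priori that $u$ and $\v$ are bounded on the truncated domain $\{|x| \le L + {\rm v} T\}$ on a set of full measure. For $\v$ this is immediate from parabolic regularity of the deterministic equation; for $u$ it follows from the continuity in $(t,x)$ of the mild solution of the SPDE on the bounded spatial window (the noise and drift are Lipschitz truncations on the compact set $\{|x| \le L + {\rm v} T\}$). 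If one prefers to avoid this a priori bound, the standard workaround is to localise: introduce a stopping time $\tau_M = \inf\{t : \|D(t,\cdot)\|_\infty \ge M\}$, prove the inequality up to $\tau_M$ where the remainder is trivially controlled, and then let $M\to\infty$. Either way, sending $N\to\infty$ in the iterated inequality gives $D(t,x) \le |Z(t,x)| + \tilde Z(t,x)$, which is the statement of the lemma.
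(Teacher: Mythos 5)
Your argument is correct, but it takes a genuinely different route from the paper's. The paper first subtracts the stochastic convolution, setting $D = u - \varrho - Z$, and observes that $D$ then solves a \emph{pathwise}, noise-free moving-boundary PDE $\partial_t D = \partial_x^2 D + f(D+\varrho+Z) - f(\varrho)$ with zero initial and boundary data; bounding the nonlinearity by $K|D| + K|Z|$, it sandwiches $D$ between the sub- and supersolutions $D_\pm$ of $\partial_t D_\pm = \partial_x^2 D_\pm \pm K\bigl[|D_\pm| + |Z|\bigr]$. Since $D_-\le 0\le D_+$, these equations become linear and Duhamel (with integrating factor $e^{Kt}$ and the killed kernel) gives $D_\pm = \pm\tilde Z$ in closed form, whence $|u-\varrho| \le |D| + |Z| \le \tilde Z + |Z|$. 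You instead keep $Z$ inside, work with the mild formulation of $u - \varrho$ directly, and run a Picard--Gronwall iteration, using the Chapman--Kolmogorov property of $G_{{\rm v},L}$ to resum the iterates $K^n(t-s)^{n-1}/(n-1)!$ into $Ke^{K(t-s)}$, which reproduces exactly $\tilde Z$. Both derivations yield the identical bound. The paper's comparison route avoids the iteration and any remainder term, at the cost of invoking a (deterministic, pathwise) sub/supersolution principle for the time-dependent domain; your route stays entirely at the integral-equation level but needs the a priori qualitative fact that $\sup_{[0,T]\times\mathbf R}|u-\varrho|<\infty$ a.s.\ to kill $\mathcal G^{N+1}D$ --- which, as you correctly note, is available here because both solutions vanish outside the compact set $\{|x|\le L+{\rm v}T\}$ and are continuous, or can be obtained by the localisation you describe. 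Your proof is a valid and fully rigorous alternative.
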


\begin{proof}
Let $
D = u-{\v}-Z
$.
Note that $D$ satisfies\begin{equation}
\label{D-eq}
\partial_t D
= \partial_x^2 D + f(D+{\v}+Z )-  { f}({\v} )  \end{equation} on $
           |x|<L +{\rm v} t$ with 
 $D (t,x) = 0 $ on $  |x|\ge L+{\rm v} t$ and $t=0$.
Now $ |{ f}(D+{\v}+v )-  { f}({\v} )|\le K |D|  + K|v|$.
Let $D_+$ and $D_-$ be the solutions of\begin{equation}
\label{rkpplower21}
\partial_t D_\pm
= \partial_x^2  D_\pm \pm K[| D_\pm|  + |Z|] , \qquad
           |x|<L +{\rm v} t
           \end{equation} with 
$ D_\pm (t,x) = 0$ for $  |x|\ge L+{\rm v} t$ and 
$ D_\pm (0,x)=0 $.
Note that $D_+$ is a supersolution and $D_-$ is a subsolution of (\ref{D-eq}), so that 
$
D_-\leq D \leq  D_+$.  Furthermore, $ D_-\le 0\le D_+$, so (\ref{rkpplower21} ) can be
solved explicitly in terms of $Z$.  We get
\begin{equation}
 D_\pm (t,x)  =  \pm K \int\int_0^t e^{K(t-s)} G_{{\rm v},L}(s,y,t,x) |Z(s,y)|dsdy.
\end{equation} as desired.
\end{proof}

Now we continue with the proof of Lemma \ref{lbd}.  
Define
\begin{equation}\hat\v(t,x)=\underline\v(t,x)+ r \epsilon\sqrt{{\goo \underline\v(t,x) }}
\end{equation}
\begin{equation} h(x)=(1+|x|)\exp\{ (1-\frac{\delta}2)|x|\}
\end{equation}


\begin{proof}[Proof of Lemma \ref{lbd}] 
 Let 
\begin{equation}
\label{curlyB}\mathcal A=\left\{ \omega:|\underline{u}(t,x) -  \underline\v(t,x)|\le \epsilon r\sqrt{{\goo \underline\v(t,x)}}
 ,~0\le t\le 1, x\in \mathbf R\right\}.
\end{equation}
In particular, on $\mathcal A$,  we have 
\begin{equation}
\underline{u}(t,x)\le\hat\v(t,x)
,\qquad 0\le t\le 1, ~x\in \mathbf R.
\end{equation} If we were to let 
$
\tilde\sigma(t,x) = \sqrt{ \min\{\sigma^2(t,x),  \hat\v(t,x)\}},
$ and
$\tilde u$ be the solution of 
(\ref{rkpplower}) with $\sigma$ replaced by $\tilde \sigma$, and   
$\tilde{\mathcal A}$ the analogue of ${\mathcal A}$ with $u$ replaced by
$\tilde u$, then $P(\mathcal A) = 
P(\tilde{\mathcal A})$.
Hence in estimating $P(\mathcal A)$ we can assume without loss of generality
that \begin{equation}
\label{assupt}
\sigma^2 (t,x) \le \hat\v(t,x).
\end{equation}
$\underline{f}$ is Lipschitz with constant $1$, so by Lemma \ref{lemma12}, 
\begin{equation}
|\underline{u}(t,x) -  \underline\v(t,x)|\le  \epsilon |\tilde Z(t,x)| + \epsilon |Z(t,x)|
,
\end{equation} 
where $\tilde Z(t,x)$ and $Z(t,x)$ are is as in (\ref{ztx}) and (\ref{vtx}) with $u$ replaced by $\underline{u}$ and $K=1$.  

Now note that there exists $ C_{(\ref{911})}<\infty$ such that for $0\le t\le 1$,
\begin{equation} \label{911} \hat\v(t,x)\le C_{(\ref{911})} \epsilon^2 \gamma h(x-{\rm v} t+L).
\end{equation}
By Lemmas \ref{ldlemma1} and \ref{variance-lemma}  with  $T=1$,  and $a=1$, and by (\ref{911}),  we have,
for some $C_{(\ref{918})}<\infty$, 
 \begin{equation}\label{918}
P( \sup_{0\le t\le 1 \atop x-{\rm v}t \in [b-1,b]}  |Z(t,x)| \ge  r \epsilon\gamma^{1/2}\sqrt{h(b)} ) \le 4 \exp\{ -C_{(\ref{918})}^{-1}r^2\} .
\end{equation}
Furthermore there exists $C_{(\ref{913})}<\infty$ such that for $0\le t\le 1$, \begin{equation}\label{913}
\epsilon^2\gamma  h(x-{\rm v} t+L)\le C_{(\ref{913})}{\goo \underline\v(t,x) }\qquad x\le {\rm v} t+L
\end{equation}
so summing $b$ from $0$ to $L$ and using (\ref{913}),
\begin{equation}\label{920a}
P(  \epsilon|Z(1,x)| \le  r\epsilon \sqrt{\goo \underline\v(1,x) }{\rm ~~for~all~~} x\in[{\rm v},{\rm v}+L) )\ge 1- 4 L\exp\{ -C_{(\ref{920a})}^{-1} r^2\} .
\end{equation}
Finally, it is not hard to check that there is a $C_{(\ref{912})}<\infty$ such that for $0\le t\le 1$,
\begin{equation} \label{912}
\int\int_0^t e^{t-s} G_{{\rm v},L}(s,y,t,x) \sqrt{h(y+L)} dsdy\le C_{(\ref{912})}  \sqrt{h(x-{\rm v} t+L)},
\end{equation} and therefore we also have for some $C_{(\ref{921})}<\infty$,
\begin{equation}\label{921}
P( \epsilon |\tilde Z(1,x)| \le   r\epsilon \sqrt{\goo \underline\v(t,x) }{\rm ~~for~all~~} x\in[{\rm v},{\rm v}+L)) \ge 1- 4 L\exp\{ -C_{(\ref{921})}^{-1}  r^2\} ,
\end{equation}
which completes the proof of Lemma \ref{lbd}.\end{proof}
%


\end{document}